\documentclass[11pt, oneside]{amsart} 
\usepackage{amsmath, amscd}
\usepackage{amsfonts}
\usepackage{amssymb}
\usepackage{amsthm}
\usepackage{upref}
\usepackage{multirow}
\usepackage{graphicx}
\usepackage{subfloat}
\usepackage{subfig}
\usepackage{url}
\usepackage{mathtools}
\usepackage{pinlabel}
\usepackage{color}
\usepackage{hyperref}
\usepackage{changepage}
\usepackage[textwidth=6in, textheight=9in,marginpar=0.75in]{geometry}

\newcommand{\Z}{\mathbb{Z}}
\newcommand{\N}{\mathbb{N}}
\newcommand{\R}{\mathbb{R}}

\newcommand{\ra}{\rightarrow}

\newcommand{\gspan}[1]{\left\langle{#1}\right\rangle}

\newcommand{\emptyword}{\varepsilon}
\newcommand{\groupid}{1}
\newcommand{\Nf}{\ensuremath{\mathcal{N}}}
\def\nf#1{\mathsf{nf}(#1)}
\newcommand{\lbl}{{\mathsf{label}}}  
\newcommand{\Graph}[1]{\mathsf{graph}\left({#1}\right)}

\newcommand{\tree}{T}                       
\newcommand{\Tr}{\ensuremath{\mathcal{T}}}  
\newcommand{\NTr}{\ensuremath{\Nf_{\Tr}}}   
\newcommand{{\rsp}}{respecting}

\newcommand{\gog}{\ensuremath{\mathcal{G}}}
\newcommand{\homom}{h}                      

\newcommand{\tlam}{T_\Lambda}
\newcommand{\velam}{\vec E_\Lambda}
\newcommand{\velnot}{\vec E_{\Lambda \setminus T}}

\newcommand{\vst}{i}                         
\newcommand{\vend}{t}                        
\newcommand{\suf}{\mathsf{suf}}              
\newcommand{\edgeonly}{\rho}                 
\newcommand{\pt}{\mathsf{path}_{T_\Lambda}}  
\newcommand{\infl}{\mathsf{infl}}            
\newcommand{\defl}{\mathsf{defl}}            
\newcommand{\trim}{\mathsf{prune}}           
\newcommand{\epair}{\mathsf{last}}           
\newcommand{\counter}{\alpha}                
\newcommand{\dcl}{\mathsf{dcl}}              
\newcommand{\idcl}{\mathsf{idcl}}            

\newcommand{\hhh}{\mathsf{hat}}

\newcommand{\HH}{\ensuremath{\mathcal{H}}}
\newcommand{\last}{\mathsf{last}}
\newcommand{\nice}{nice}

\newtheorem{thm}{Theorem}[section]
\newtheorem{cor}[thm]{Corollary}
\newtheorem{lem}[thm]{Lemma}
\newtheorem{prop}[thm]{Proposition}

\theoremstyle{remark}

\theoremstyle{definition}
\newtheorem{defn}[thm]{Definition}
\newtheorem{rmk}[thm]{Remark}

\newcounter{claimcounter}[thm]
\numberwithin{claimcounter}{thm}
\newcounter{casecounter}

\newcounter{subcasecounter}[casecounter]
\numberwithin{subcasecounter}{casecounter}

\long\def\Restate#1#2#3#4{
\medskip\par\noindent
{\bf #1 \ref{#2} #3} {\it #4}\par\medskip }

\long\def\Restatesd#1#2#3#4#5#6{
\medskip\par\noindent
{\bf #1 \ref{#2} and #6 \ref{#5} #3} {\it #4}\par\medskip }

\begin{document}

\title{Geometry of the word problem for 3-manifold groups}
\author[M. Brittenham]{Mark Brittenham}
\address{Department of Mathematics, University of Nebraska, Lincoln, NE 68588-0130, USA}
\email{mbrittenham2@unl.edu}

\author[S. Hermiller]{Susan Hermiller}
\address{Department of Mathematics, University of Nebraska, Lincoln, NE 68588-0130, USA}
\email{hermiller@unl.edu}

\author[T. Susse]{Tim Susse}
\address{Department of Mathematics, Bard College at Simon's Rock,
  Great Barrington, MA 01230, USA}
\email{tsusse@simons-rock.edu}

\thanks{2010 {\em Mathematics Subject Classification}. 20F65; 20F10, 57M05, 68Q42}


\begin{abstract}
We provide an algorithm to solve the word problem in all fundamental
groups of  3-manifolds that are either closed, or compact with
(finitely many) boundary components consisting of incompressible
tori, by showing that these groups are autostackable.
In particular, this gives a common framework to solve the word
problem in these 3-manifold groups using finite state automata.

We also introduce the notion of a group which is
autostackable {\rsp} a subgroup, and show that a
fundamental group of a graph of groups
whose vertex groups are autostackable {\rsp} any edge group is
autostackable.  A group that is strongly coset automatic over
an autostackable subgroup, using a prefix-closed
transversal, is also shown to be autostackable
{\rsp} that subgroup.
Building on work by Antolin and Ciobanu, we
show that a finitely generated group that is hyperbolic relative
to a collection of abelian subgroups is also
strongly coset automatic relative to each subgroup
in the collection.
Finally, we show that fundamental groups of compact geometric
3-manifolds, with boundary consisting of
(finitely many) incompressible torus components,
are autostackable {\rsp} any choice of peripheral subgroup.
\end{abstract}

\maketitle


\section{Introduction}\label{sec:intro}


One fundamental goal in geometric group theory since its inception
has been to find algorithmic
and topological characteristics of the Cayley graph satisfied
by all closed 3-manifold fundamental
groups, to facilitate computations. This was an original
motivation for the definition of automatic groups by
Epstein, Cannon, Holt, Levy, Paterson, and Thurston~\cite{wordprocessing},
and its recent extension to Cayley automatic groups by
Kharlampovich, Khoussainov, and Miasnikov~\cite{KKM:Cayleyauto}.
These constructions, as well as finite convergent
rewriting systems, provide a solution to the word problem
using finite state automata.
However, automaticity fails
for 3-manifold groups in two of the eight geometries,
and Cayley automaticity and
finite convergent rewriting systems are
unknown for many 3-manifold groups.
Autostackable groups, first introduced by the
first two authors and Holt in~\cite{BHH:algorithms},
are a natural extension of both automatic groups and groups with
finite convergent rewriting systems. In common with these two
motivating properties, autostackability also gives a
solution to the word problem using finite state automata.
In this paper we show that
the fundamental group of every compact 3--manifold with
incompressible toral boundary, and hence every
closed 3-manifold group,  is autostackable. 




Let $G$ be a group with a finite
inverse-closed generating set $A$.
Autostackability is defined using a discrete
dynamical system on the Cayley graph
$\Gamma:=\Gamma_A(G)$ of $G$ over $A$, as follows.
A \emph{flow function} for $G$ with \emph{bound} $K \ge 0$,
with respect to
a spanning tree $T$ in $\Gamma$,
is a function $\Phi$ mapping the
set $\vec E$ of directed edges of $\Gamma$ to the
set $\vec P$ of directed paths in $\Gamma$, such that
\begin{description}
\item[(F1)] for each $e \in \vec E$ the path
$\Phi(e)$ has the same initial and terminal
vertices as $e$ and length at most $K$,
\item[(F2)] $\Phi$ acts as the identity on edges lying
in $T$ (ignoring direction),
and
\item[(F3)] there is no infinite sequence
$e_1,e_2,e_3,...$ of edges with each $e_i \in \vec E$
not in $T$ and each $e_{i+1}$ in 
the path $\Phi(e_i)$.
\end{description}
These three conditions are motivated by their
consequences for the extension
 $\widehat \Phi:\vec P \ra \vec P$ of $\Phi$ to
directed paths in $\Gamma$ defined by $\widehat
\Phi(e_1 \cdots e_n):=\Phi(e_1) \cdots \Phi(e_n)$, where $\cdot$
denotes concatenation of paths.  
Upon iteratively applying $\widehat \Phi$ to a path $p$,
whenever a subpath of $\widehat \Phi^n(p)$ lies in
$T$, then that subpath remains unchanged 
in any further iteration $\widehat \Phi^{n+k}(p)$, since
conditions (F1-2) show that 
$\widehat \Phi$ fixes any point
that lies in the tree $T$.
Condition (F3) 
ensures that 
for any path $p$ 
there is a natural
number $n_p$ such that $\widehat \Phi^{n_p}(p)$ is a path in
the tree $T$, and hence 
$\widehat \Phi^{n_p+k}(p) = \Phi^{n_p}(p)$ for all $k \ge 0$.  
The bound $K$ controls the 
extent to which each
application of $\widehat \Phi$ can alter a path. 
Thus when $\widehat \Phi$ is iterated, paths in
$\Gamma$ ``flow'', in bounded steps, toward the tree. 

A finitely generated group admitting
a bounded flow function over some finite set of generators is
called \emph{stackable}.
Let $\Nf_T$ denote the set of words
labeling non-backtracking paths in $T$ that start at the vertex
labeled by the identity $\groupid$ of $G$ (hence $\Nf_T$ is a
prefix-closed set of normal forms for $G$), and let $\lbl:\vec P \ra
A^*$ be the function that returns the label of any directed path in
$\Gamma$. The group $G$ is \emph{autostackable} if there is a finite
generating set $A$ with a bounded flow function $\Phi$ such that the
graph of $\Phi$, written in triples of strings over $A$ as
\begin{align*}
\Graph{\Phi}:=\{(y,a,\lbl(\Phi(e_{y,a})) \mid &
y \in \Nf_T, a \in A, \text{ and }
e_{y,a} \in \vec E
\text{ has } \\
& \text{ initial vertex } y \text{ and label }a\},
\end{align*}
is recognized by a finite state automaton
(that is, $\Graph{\Phi}$ is a (synchronously)
regular language).

To solve the word problem in an autostackable
group, given a word $w$ in $A^*$, by using the
finite state automaton recognizing $\Graph{\Phi}$
to iteratively replace
any prefix of the form $ya$ with $y \in \Nf_T$
and $a \in A$ by $y\lbl(\Phi(e_{y,a}))$
(when $\lbl(\Phi(e_{y,a}))$ is not $a$), and performing free
reductions, a word $w' \in \Nf_T$ is obtained,
and $w=_G \groupid$ if and only if
$w'$ is the empty word.
Hence autostackability also implies that the group
has a finite presentation.

Both autostackability and its motivating property
of automaticity also have equivalent definitions
as rewriting systems.  In particular, 
a group $G$ is autostackable if and only if 
$G$ admits a bounded regular convergent
prefix-rewriting system~\cite{BHH:algorithms},
and a group $G$ is automatic 
with prefix-closed normal forms if and only if 
$G$ admits an interreduced regular 
convergent prefix-rewriting system~\cite{otto}.
(See Section~\ref{sub:autostack} for definitions
of rewriting systems and Section~\ref{sub:cosetaut}
for a geometric definition of automaticity.)

The class of autostackable groups contains all
groups with a finite convergent rewriting system or an asynchronously
automatic structure with prefix-closed (unique) normal forms~\cite{BHH:algorithms}.
Beyond these examples, autostackable groups include some groups that
do not have homological type $FP_{3}$ \cite[Corollary~4.2]{BHJ:closure}
and some groups whose Dehn function is non-elementary primitive recursive;
in particular, Hermiller and Mart\'{i}nez-P\'erez show
in~\cite{HermillerMartinez:HNN} that the Baumslag-Gersten group
is autostackable.

We focus here on the case where $G$ is the fundamental group of a
connected, compact 3-manifold $M$ with incompressible toral boundary.
In~\cite{wordprocessing} it is shown that if
no prime factor of $M$ admits \emph{Nil} or
\emph{Sol} geometry, then $\pi_1(M)$ is automatic. However, the
fundamental group of any $\emph{Nil}$ or $\emph{Sol}$ manifold does
not admit an automatic, or even asynchronously automatic,
structure~\cite{wordprocessing, Brady:Sol}. Replacing the finite
state automata by automata with unbounded memory, Bridson and Gilman
show in~\cite{BridsonGilman:Indexed} that the group $G$
is asynchronously combable by an indexed language (that is, a
set of words recognized by a nested stack automaton), although for
some 3-manifolds the language cannot be improved to context-free
(and a push-down automaton). Another extension of automaticity,
solving the word problem with finite state automata whose alphabets
are not based upon a generating set, is given by the more recent
concept of Cayley graph automatic groups, introduced by
Kharlampovich, Khoussainov and Miasnikov in \cite{KKM:Cayleyauto};
however, it is an open question whether all fundamental groups of
closed 3-manifolds with \emph{Nil} or \emph{Sol} geometry are Cayley
graph automatic. From the rewriting viewpoint,
in~\cite{HermillerShapiro:rewriting} Hermiller and Shapiro showed
that fundamental groups of closed fibered hyperbolic 3-manifolds
admit finite convergent rewriting systems, and that all closed
geometric 3-manifold groups in the other 7 geometries do as well.
However, the question of whether all closed 3-manifold groups admit
a finite convergent rewriting system also remains open.


In this paper we show that \emph{every} fundamental group of a
connected, compact 3-manifold with incompressible toral boundary is
autostackable. The results of~\cite{BHH:algorithms} above show that
the fundamental group of any closed geometric 3-manifold is
autostackable; here, we will show that the restriction to geometric
manifolds is unnecessary. To do this, we investigate the
autostackability of geometric pieces arising in the JSJ
decomposition of a 3-manifold, along with closure properties of
autostackability under the construction of fundamental groups of
graphs of groups, including amalgamated products and HNN extensions.

We begin with background on
automata, autostackability, rewriting systems,
fundamental groups of graphs of groups,
strongly coset automatic groups,
relatively hyperbolic groups, and 3-manifolds
in Section~\ref{sec:background}.

Section~\ref{sec:autstkgog} contains the proof
of the autostackability closure property for graphs of groups.
We define a group $G$ to be \emph{autostackable {\rsp}}
a finitely generated subgroup $H$ if
$G$ has an autostackable structure with flow function
$\Phi$ and spanning tree $T_G$
on a generating set $A$ satisfying:
\begin{itemize}
\item[]{\em Subgroup closure:}
There is a finite inverse-closed generating set $B$ for $H$
contained in $A$ such that
$T_G$ contains a spanning tree $T_H$
for the subgraph $\Gamma_B(H)$ of $\Gamma_A(G)$,
and for all $h \in H$ and $b \in B$,
$\lbl(\Phi(e_{h,b})) \in B^*$.
\item[]{\em $H$--translation invariance:}
There is a subtree $T'$ of $T_G$ containing
the vertex $\groupid$ such that the left action
of $H$ on $\Gamma_G(A)$ gives 
$T_G = T_H \cup (\cup_{h \in H} hT')$,
and for all $h,\tilde h \in H$ the trees
$hT', \tilde h T'$ are disjoint and
the intersection of the trees $hT', T_H$
is the vertex $h$. 
Moreover, the group action outside of $\Gamma_B(H)$
preserves the label of the flow function; that is,
for all directed edges
$e_{g,a}$ of $\Gamma_A(G)$ not in
$\Gamma_B(H)$ 
(with $g \in G$ and $a \in A$)
and for all $h \in H$, 
the flow function satisfies
$\lbl(\Phi(e_{g,a}))=
\lbl(\Phi(e_{hg,a}))$.
\end{itemize}
(As above, $e_{g,a}$ denotes the directed edge of $\Gamma_A(G)$
with initial vertex $g$ and label $a$.)
The conditions on the tree $T_G$
are equivalent to the requirement that the associated
normal form set $\Nf_G$ satisfy
$\Nf_{G} = \Nf_{H}\Nf_{\Tr}$
for some prefix-closed sets $\Nf_{H} \subset B^*$ of normal
forms for $H$ and
$\Nf_{\Tr} \subset A^*$ of normal
forms for the set of right cosets $H \backslash G$.
The subgroup closure condition together with 
Lemma~\ref{lemma:respnf}
imply that the subgroup $H$
is also autostackable.
If the requirement that the graph
of the flow function is a regular language is removed,
we say that $G$ is \emph{stackable {\rsp}} $H$.
We show that autostackability of vertex
groups {\rsp} edge groups suffices to preserve
autostackability for graphs of groups.

\Restate{Theorem}{thm:GoGautostack}{}
{Let $\gog$ be a graph of groups
over a finite connected graph $\Lambda$ with at least one edge.
If for each directed edge $e$ of $\Lambda$
the vertex group $G_v$ corresponding
to the terminal vertex $v=t(e)$ of $e$
is autostackable [respectively, stackable] {\rsp}
the associated injective homomorphic image
of the edge group $G_e$, then the
fundamental group
$\pi_1(\gog)$ is autostackable [respectively, stackable].
}

We note that for the two word problem
algorithms that motivated autostackability,
some closure properties for the graph of groups
construction have been found, but with other added restrictions.
For automatic groups, closure
of amalgamated free products and HNN extensions
over finite subgroups is shown
in~\cite[Thms~12.1.4,~12.1.9]{wordprocessing}
and closure for amalgamated products
under other restrictive geometric and language theoretic
conditions has been shown in~\cite{BGGS:autoamalgams}.
For groups with finite convergent
rewriting systems, closure for HNN-extensions
in which one of the associated subgroups equals the base
group and the other has finite index in the base group
is given in~\cite{GS}.
Closure for stackable groups in the special case of an HNN extension
under significantly relaxed  assumptions
(and using left cosets instead of right)
are given by the second author and Mart\'{i}nez-P\'erez in
\cite{HermillerMartinez:HNN}.  They also prove a closure result
for HNN extensions of autostackable groups, with a
requirement of further technical assumptions.

Section~\ref{sec:extn} contains a discussion of extensions of
two autostackability closure results of~\cite{BHJ:closure}
to autostackability {\rsp} subgroups, namely
for extensions of groups and finite index supergroups.

In Section~\ref{sec:cosetautgp} we study the
relationship between autostackability of a
group $G$ {\rsp} a subgroup $H$ and strong
coset automaticity of $G$ with respect
to $H$ defined by Redfern~\cite{Redfern:thesis}
and Holt and Hurt~\cite{HoltHurt:Coset}
(referred to as coset automaticity
with the coset fellow-traveler property
in the latter paper; see Section~\ref{sub:cosetaut} below for
definitions).
More precisely, we prove the following.

\Restate{Theorem}{thm:cosetautostack}{}
{
Let $G$ be a finitely generated group
and $H$ a finitely generated autostackable subgroup of $G$.
If the pair $(G,H)$ is strongly prefix-closed coset automatic,
then $G$ is autostackable {\rsp} $H$.}

Applying this in the case where $G$ is hyperbolic relative to
a collection of sufficiently nice subgroups, and building upon
work of Antolin and Ciobanu~\cite{AntolinCiobanu:relhyp},
we obtain the following.

\Restate{Theorem}{thm:relhypauto}{}
{Let $G$ be a group 
that is hyperbolic relative to a collection of subgroups $\{H_1,...,H_n\}$
and is generated by a finite set $A'$.
Suppose that for every index $j$, the group
$H_j$ is shortlex biautomatic on every finite ordered generating set.
Then there is a finite subset
$\HH' \subseteq \HH:=\cup_{j=1}^n (H_j \setminus \groupid)$ such that
for every finite generating set $A$ of $G$ with
$A' \cup \HH' \subseteq A \subseteq A' \cup \HH$ and any ordering on $A$,
and for any $1 \le j \le n$, the pair $(G,H_j)$ is strongly shortlex
coset automatic, and $G$ is autostackable {\rsp} $H_j$, over $A$.}

In particular, if $G$ is hyperbolic relative to abelian subgroups,
then $G$ is autostackable {\rsp} any peripheral subgroup.

In Section 6 we prove our results on autostackability of 3-manifold
groups.  We begin by considering compact geometric 3-manifolds with
boundary consisting of a finite number of incompressible tori that
arise in a JSJ decomposition of a compact, orientable, prime
3-manifold. Considering the Seifert fibered and hyperbolic cases
separately,
we obtain the following. 

\Restatesd{Proposition}{prop:seifertautostack}{}
{Let $M$ be a finite volume geometric 3-manifold with
incompressible toral boundary. Then
for each choice of component $T$ of $\partial M$,
the group $\pi_1(M)$ is autostackable {\rsp} any conjugate of
$\pi_1(T)$.}{prop:relhypautostack}{Corollary}

In comparison, finite convergent
rewriting systems have been found
for all fundamental groups of Seifert fibered knot complements,
namely the torus knot groups, by Dekov~\cite{dekov},
and for fundamental groups of alternating knot complements,
by Chouraqui~\cite{Chouraqui:FCRS}.
In the case of a finite volume hyperbolic
3-manifold $M$, the fundamental group
$\pi_1(M)$ is hyperbolic relative to the
collection of fundamental groups of its
torus boundary components by a result of Farb~\cite{Farb:relhyp},
and so by closure of the class of
(prefix-closed) biautomatic
groups with respect to relative hyperbolicity
(shown by Rebbecchi in~\cite{rebbechi};
see also~\cite{AntolinCiobanu:relhyp}), the group
$\pi_1(M)$ is biautomatic.

Combining this result on fundamental groups of pieces arising from
JSJ decompositions with Theorem~\ref{thm:GoGautostack}, together
with other closure properties for autostackability, yields the
result on 3-manifold groups.

\Restate{Theorem}{thm:3mfldauto}{} {Let $M$ be a compact
$3$-manifold with incompressible toral boundary. Then $\pi_1(M)$ is
autostackable. In particular, if $M$ is closed, then $\pi_1(M)$ is
autostackable.}



\subsection*{Acknowledgments}


The second author was partially supported by a grant from
the National Science Foundation (DMS-1313559).


\section{Background: Definitions and notation}\label{sec:background}


In this section we assemble definitions, notation, and theorems 
that will be used in the rest of the paper, in order
to make the paper more self-contained.

Let $G=\gspan{A}$ be a group.
Throughout this paper we will assume that every
generating set is finite and inverse-closed,
and every generating set for a flow function
does not contain a letter representing the
identity element of the group.
By $``="$ we mean equality in $A^*$,
while $``=_G"$ denotes equality in the group $G$.
For a word $w\in A^*$, we denote its length by $\ell(w)$.
The identity of the group $G$ is written $\groupid$ and
the empty word in $A^*$ is $\emptyword$.

Let $\Gamma_A(G)$ be the Cayley graph of $G$ with the generators
$A$. We denote by $\vec E=\vec{E}_A(G)$ the set of
oriented edges of
the Cayley graph, and denote by $\vec P=\vec{P}_A(G)$
the set of directed edge paths in $\Gamma_A(G)$.
By $e_{g,a}$ we mean the oriented edge with
initial vertex $g$ labeled by $a$.

By a set of
\emph{normal forms} we mean the image $\Nf=\sigma(G) \subset A^*$
of a section $\sigma\colon G\to A^*$
of the natural monoid homomorphism $A^*\to G$. In particular, every
element of $G$ has a unique normal form.
For $g\in G$, we denote its
normal form by $\nf{g}$.

Let $H\le G$ be a subgroup. By a
right coset of $H$ in $G$ we mean a subset of the form $Hg$ for
$g\in G$. We denote the set of right cosets by $H \backslash G$.
A subset $\Tr\subseteq G$ is a \emph{right transversal}
for $H$ in $G$ if every right coset of $H$ in $G$ has a
unique representative in $\Tr$.


\subsection{Regular languages}\label{sub:reg}



$~$

\medskip

A comprehensive reference on the contents of this section
can be found in~\cite{wordprocessing,HopcroftUllman}; see
\cite{BHH:algorithms} for a more concise introduction.

Let $A$ be a finite set, called an \emph{alphabet}.
The set of all finite strings over $A$
(including the empty word $\emptyword$)
is written $A^*$.
A \emph{language} is a subset
$L\subseteq A^*$. Given
languages $L_1,L_2$
the \emph{concatenation} $L_1L_2$
of $L_1$ and $L_2$ is
the set of all expressions of the form $l_1l_2$ with $l_i\in L_i$.
Thus $A^k$ is the set of all words of length $k$ over $A$;
similarly,  we denote the set of all words of length at most
$k$ over $A$ by $A^{\le k}$.
The \emph{Kleene star} of $L$, denoted $L^*$,
is the union of $L^n$ over all integers $n\ge 0$.

The class of \emph{regular languages}
over $A$ is the smallest class of languages
that contains all finite languages and is closed under union,
intersection, concatenation, complement and Kleene star. (Note that
closure under some of these operations is redundant.)

Regular languages are precisely those accepted by finite state
automata; that is, by computers with a bounded amount of memory.
More precisely, a finite state automaton consists of a finite set of
states $Q$, an initial state $q_0 \in Q$, a set of accept states $P
\subseteq Q$, a finite set of letters $A$, and a transition function
$\delta:Q \times A \ra Q$.  The map $\delta$ extends to a function
$\delta:Q \times A^* \ra Q$; for a word $w=a_1 \cdots a_k$ with each
$a_i$ in $A$, the transition function gives
$\delta(q,w)=\delta(\cdots (\delta(\delta(q,a_1),a_2),\cdots,a_k)$.
The automaton can also considered as a directed labeled graph whose
vertices correspond to the state set $Q$, with a
directed edge from $q$ to $\delta(q,a)$ labeled by $a$
for each $a \in A$ and $q \in Q$. Using this
model $\delta(q,w)$ is the terminal vertex of the path starting at
$q$ labeled by $w$. A word $w$ is in the language of this automaton
if and only if $\delta(q_0,w) \in P$.

The concept of regularity is extended to
subsets of a Cartesian product
$(A^*)^n=A^* \times \cdots \times A^*$ of $n$ copies
of $A^*$ as follows.
Let $\$$ be a symbol not contained in $A$.
Given any tuple $w=(a_{1,1} \cdots a_{1,m_1},...,
a_{n,1} \cdots a_{n,m_n}) \in (A^*)^n$ (with
each $a_{i,j} \in A$), rewrite
$w$ to a \emph{padded word} $\hat w$ over the finite
alphabet $B:=(A \cup \$)^n$ by
$\hat w :=
(\hat a_{1,1},...,\hat a_{n,1}) \cdots (\hat a_{1,N},...,\hat a_{n,N})$
where $N=\max\{m_i\}$ and $\hat a_{i,j} =a_{i,j}$
for all $1 \le i \le n$
and $1 \le j \le m_i$ and $\hat a_{i,j}=\$$ otherwise.
A subset $L \subseteq (A^*)^n$ is called
a \emph{regular language} (or, more precisely,
\emph{synchronously regular}) if the set
$\{ \hat w \mid w \in L\}$ is a regular subset
of  $B^*$.

The following theorem, much of the proof of which can be found
in \cite[Chapter 1]{wordprocessing}, contains  closure
properties of regular languages that are used later in this paper.

\begin{thm}\label{thm:regclosure}
Let $A,B$ be finite alphabets, $x$ an element of $A^*$,
$L,L_i$ regular languages over $A$,
$K$  a regular language over $B$,
$\phi\colon A^*\to B^*$ a monoid homomorphism,
$L'$ a regular subset of $(A^*)^n$, and
$p_i:(A^*)^n \ra A^*$ the projection map on the
$i$-th coordinate.
Then the following languages are
also regular:
\begin{enumerate}
\item (Homomorphic image) $\phi(L)$.
\item (Homomorphic preimage) $\phi^{-1}(K)$.
\item (Quotient) $L_x:=\{w\in A^*: wx\in L\}$.
\item (Product) $L_1 \times L_2 \times \cdots \times L_n$.
\item (Projection) $p_i(L)$.
\end{enumerate}
\end{thm}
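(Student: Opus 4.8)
The plan is to establish each of the five closure properties by an explicit finite state automaton construction. Throughout I would freely use the standard fact that nondeterministic automata, allowing $\emptyword$-transitions, recognize exactly the regular languages (via the subset construction), since several of the constructions below are most naturally phrased nondeterministically. The deterministic automaton model recalled in Section~\ref{sub:reg} provides the starting data in each case, and the work is to modify the states, transitions, or accept set appropriately and to verify that the resulting machine accepts the claimed language.

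First I would dispatch the quotient (3) and homomorphic preimage (2), each of which requires only a modification of an automaton leaving its state set essentially intact. For the quotient $L_x$, take a deterministic automaton $(Q,q_0,P,A,\delta)$ recognizing $L$. Since $\delta(q_0,wx)=\delta(\delta(q_0,w),x)$, the language $L_x$ is recognized by the same automaton with the new accept set $P':=\{q\in Q : \delta(q,x)\in P\}$, and $w\in L_x$ iff $\delta(q_0,w)\in P'$. For the preimage $\phi^{-1}(K)$, take an automaton $(Q,q_0,P,B,\delta)$ for $K$ and build a machine over $A$ with the same states and accept set but transition $\delta'(q,a):=\delta(q,\phi(a))$, using the extended transition function on the word $\phi(a)$; then $\delta'(q_0,w)=\delta(q_0,\phi(w))$, so $w$ is accepted iff $\phi(w)\in K$.

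Next I would treat the homomorphic image (1). Given an automaton for $L$, I replace each transition labeled by a letter $a$ with a directed path labeled by the word $\phi(a)$, inserting new intermediate states (and an $\emptyword$-transition when $\phi(a)=\emptyword$); the resulting nondeterministic automaton accepts exactly $\phi(L)$. The projection (5) then follows immediately as a special case: viewing $L'\subseteq(A^*)^n$ as the regular language of padded words over $C:=(A\cup\$)^n$, the projection $p_i$ is realized as the image under the monoid homomorphism $C^*\ra A^*$ sending a padded symbol $(\hat a_1,\dots,\hat a_n)$ to $\hat a_i$ when $\hat a_i\in A$ and to $\emptyword$ when $\hat a_i=\$$, so the regularity of $p_i(L')$ follows from (1).

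The main work, and the step I expect to be the principal obstacle, is the product (4), since here the padding convention of synchronous regularity must be handled directly. Given deterministic automata $(Q_i,q_{0,i},P_i,A,\delta_i)$ for the $L_i$, I would build an automaton over $C=(A\cup\$)^n$ whose states record, for each coordinate $i$, the current state of the $i$-th machine together with a flag indicating whether coordinate $i$ has already begun reading the padding symbol $\$$. On an input symbol $(\hat a_1,\dots,\hat a_n)$, each coordinate with $\hat a_i\in A$ advances via $\delta_i$ and is rejected if it has already been flagged (enforcing that $\$$ occurs only as a terminal suffix in each coordinate), while each coordinate with $\hat a_i=\$$ freezes its state and sets its flag. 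A padded word is accepted precisely when every recorded coordinate state lies in $P_i$, which happens iff each $w_i\in L_i$. The routine but delicate part of the argument is verifying that this bookkeeping matches the padding definition exactly, so that the padded words of $L_1\times\cdots\times L_n$ form a regular subset of $C^*$.
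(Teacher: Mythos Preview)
The paper does not actually give a proof of this theorem; it is recorded as a standard fact with a citation to \cite[Chapter~1]{wordprocessing}, so there is no in-paper argument to compare your proposal against.

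Your constructions are the standard ones and are correct. The only small technical point worth tightening is in your product construction (4): as written, your machine would also accept strings over $(A\cup\$)^n$ in which the all-$\$$ symbol $(\$,\dots,\$)$ occurs, but by the padding convention no genuine padded word $\hat w$ ever contains this symbol (since at every position $j\le N=\max\{m_i\}$ at least one coordinate still carries a letter of $A$). So your automaton accepts a strict superset of $\{\hat w : w\in L_1\times\cdots\times L_n\}$. The fix is immediate---either remove $(\$,\dots,\$)$ from the input alphabet or send it to a fail state---but you should say so explicitly, since the definition of synchronous regularity requires exactly the set of padded words to be regular, not merely some regular superset.
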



\subsection{Autostackability and rewriting systems}\label{sub:autostack}


$~$

\medskip

Proofs of the results in this
section and more detailed background on autostackability
are in~\cite{BHH:algorithms, BHJ:closure, HermillerMartinez:HNN}.
Let $G=\gspan{A}$ be an autostackable group,
with spanning tree $T$ in $\Gamma_A(G)$ and
flow function $\Phi:\vec E \ra \vec P$.

As noted in Section~\ref{sec:intro}, the tree $T$
defines a set of prefix-closed normal forms, denoted $\Nf=\Nf_T$,
for $G$, namely the words that label non-backtracking paths
in $T$ with initial vertex $1$; as above, we denote the normal form
of $g \in G$ by $\nf{g}$. Since $\Nf=p_1(\Graph{\Phi})$,
where $p_1$ denotes projection on the first coordinate,
Theorem~\ref{thm:regclosure} implies that the set $\Nf$
is a regular language over $A$.

An illustration of the flow function path associated
to the edge $e_{g,a}$ in the Cayley graph $\Gamma_A(G)$
is given in Figure~\ref{fig:flow}.
\begin{figure}
\begin{center}
\includegraphics[width=3.8in,height=1.0in]{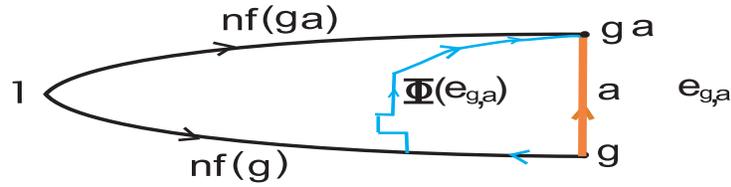}
\caption{The flow function}\label{fig:flow}
\end{center}
\end{figure}
The flow function $\Phi$ yields an algorithm to build a
van Kampen diagram for any word $w$ over $A$ representing
the trivial element of $G$.  Writing $w=a_1 \cdots a_n$,
diagrams for each of the words
$\nf{a_1 \cdots a_{i-1}}a_i\nf{a_1 \cdots a_i}^{-1}$ 
are recursively constructed, and then glued along 
the normal forms.
(Prefix closure of the normal forms implies that
each path labeled by a
normal form is a simple path, and hence this
gluing preserves planarity of the diagram.)
In particular, for $g \in G$ and $a \in A$, 
the edge $e_{g,a}$ lies in the tree $T$ if
and only if either $\nf{g}a=\nf{ga}$ or $\nf{ga}a^{-1}=\nf{g}$,
which in turn holds if and only if there is a
degenerate van Kampen diagram (i.e., containing no
2-cells) for the word $\nf{g}a\nf{ga}^{-1}$.
In the case that $e_{g,a}$ is not in $T$,
the diagram for $\nf{g}a\nf{ga}^{-1}$ is built
recursively from the 2-cell bounded by $e_{g,a}$
and $\Phi(e_{g,a})$ together with van Kampen diagrams for
the edges in the path $\Phi(e_{g,a})$.
See~\cite{BH:uniform,BHH:algorithms} for more details.

Since directed edges in $\Gamma_A(G)$ are in bijection
with $\Nf \times A$ and the set of paths in the Cayley graph based at
$g$ is in bijection with the set
$A^*$ of their edge labels, the flow function $\Phi$
gives the same information as
the \emph{stacking function} $\phi\colon \Nf\times A\to A^{\le K}$
defined by $\phi(\nf{g},a):=\lbl(\Phi(e_{g,a}))$.
Thus the set
$$
\Graph{\Phi}=\{(\nf{g},a,\phi(\nf{g},a)): g \in G, a \in A\}
$$
is the graph of this stacking function.
This perspective will be very useful in writing
down the constructions of flow functions throughout
the paper.

In \cite{BH:uniform} the first two authors show that if $G$ is a
stackable group whose flow function bound is $K$,
then $G$ is finitely presented with relators given
by the labels of loops in $\Gamma_A(G)$ of length at most $K+1$
(namely the relations $\phi(\nf{g},a)=_G a$). Although it is unknown
if autostackability is invariant under changes in finite generating
sets, we note that it is straightforward to show that if $G$ is
autostackable with generating set $A$ and $A\subseteq B$, then $G$
is autostackable with the generators $B$ using the same set of
normal forms (see~\cite[Proposition~4.3]{HermillerMartinez:HNN} for
complete details).

Not every
stackable group has decidable word problem;
Hermiller and Mart\'{i}nez-P\'erez~\cite{HermillerMartinez:HNN}
show that there
exist groups with a bounded flow function
but unsolvable word problem.
A group with a bounded flow function $\Phi$
whose graph is a recursive (i.e., decidable)
language has a word problem solution using
the automaton that recognizes $\Graph{\Phi}$~\cite{BHH:algorithms}.
Thus autostackable groups have word problem solutions using
finite state automata.

Autostackability also has an interpretation in terms
of prefix-rewriting systems.
A \emph{convergent prefix-rewriting system} for a group $G$ consists
of a finite set $A$ and a subset $R\subseteq A^*\times A^*$ such that
$G$ is presented as a monoid by
$\gspan{A\mid \{uv^{-1}:(u,v)\in R\}}$ and the rewriting operations of
the form $uz\to vz$ for all $(u,v)\in R$ and $z\in A^*$ satisfy:
\begin{itemize}
\item[]\emph{Termination.} There is no infinite sequence of rewritings
$x\to x_1\to x_2\to\ldots$
\item[]\emph{Normal Forms.} Each $g\in G$ is represented by a unique
irreducible word ({i.e.}, one that cannot be rewritten) over
$A$.
\end{itemize}
A prefix-rewriting system is called \emph{regular} if $R$ is a
regular subset of $A^* \times A^*$, and
$R$ is called
\emph{interreduced} if for each $(u,v) \in R$
the word $v$ is irreducible over $R$ and the word
$u$ is irreducible over $R \setminus \{(u,v)\}$.
A prefix-rewriting system $R$ is called \emph{bounded} if
there is a constant $K>0$ such that for each $(u,v)\in R$ there are
words $s,t,w\in A^*$ such that $u=ws$, $v=wt$ and
$\ell(s),\ell(t)\le K$.

The following is a recharacterization of
autostackability which is useful in interpreting the results
and proofs later in this paper.

\begin{thm}[Brittenham, Hermiller, Holt~\cite{BHH:algorithms}]
\label{thm:rewritingsystem} Let $G$ be a finitely generated group.
\begin{enumerate}
\item The group $G$ is autostackable if and only if $G$
admits a regular bounded convergent prefix-rewriting system.
\item The group $G$ is stackable if and only if $G$ admits a bounded
convergent prefix-rewriting system.
\end{enumerate}
\end{thm}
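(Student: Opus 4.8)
The plan is to prove both biconditionals at once by making explicit the dictionary between a (bounded) flow function and a (bounded convergent) prefix-rewriting system, and then to add a layer of regularity bookkeeping that upgrades ``stackable'' to ``autostackable'' for part~(1). The central observation is that a stacking function $\phi\colon \Nf_T\times A\to A^{\le K}$ and a prefix-rewriting system $R$ carry the same data, once one reads each non-tree edge as a single rule: set $R:=\{(\nf{g}a,\ \nf{g}\,\phi(\nf{g},a)) : g\in G,\ a\in A,\ e_{g,a}\notin T\}$. For a non-tree edge, condition (F3) forces $\phi(\nf{g},a)\neq a$ (otherwise $\Phi(e_{g,a})=e_{g,a}$ would give the constant infinite flow sequence $e_{g,a},e_{g,a},\dots$), so every element of $R$ is a genuine rule.

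For the forward direction I would first identify the irreducible words with $\Nf_T$. Prefix-closure of $\Nf_T$ shows every proper prefix of $\nf{g}$ is some $\nf{g'}$ whose next letter $a$ satisfies $\nf{g'}a=\nf{g'a}$, a tree edge; hence no left-hand side is a prefix of a normal form and normal forms are irreducible. Conversely a non-normal-form word $w$ has a longest normal-form prefix $\nf{g}$ followed by a letter $a$ with $e_{g,a}\notin T$, exhibiting the left-hand side $\nf{g}a$. Boundedness is immediate: each rule has common prefix $w=\nf{g}$, with suffixes $a$ and $\phi(\nf{g},a)$ of length at most $\max(1,K)$. Termination comes directly from (F3): the relation ``$e'$ is a non-tree edge occurring in $\Phi(e)$'' is well-founded, so each non-tree edge carries an ordinal rank, and assigning to a word the multiset of ranks of the non-tree edges of the path it labels from $\groupid$, a rewrite $\nf{g}au\to\nf{g}\,\phi(\nf{g},a)\,u$ leaves the tree prefix and the suffix $u$ untouched (the suffix because (F1) fixes the terminal vertex $ga$) while replacing one edge of rank $\rho(e_{g,a})$ by finitely many edges of strictly smaller rank; the induced multiset ordering on ranks is well-founded, giving termination. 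With $\phi(\nf{g},a)=_G a$ (again (F1)) this shows $R$ is a bounded convergent prefix-rewriting system presenting $G$. For part~(1), regularity of $\Graph{\Phi}$ makes $R$ its image under regularity-preserving operations, hence regular by Theorem~\ref{thm:regclosure}.

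Conversely, given a bounded convergent $R$, I would first note that its irreducible set $\Nf$ is automatically prefix-closed, since $w$ is reducible exactly when some left-hand side is a prefix of $w$; take $\Nf_T:=\Nf$ and let $T$ have edges those $e_{g,a}$ with $\nf{g}a=\nf{ga}$ or $\nf{ga}a^{-1}=\nf{g}$, which prefix-closure and uniqueness of normal forms make a genuine spanning tree realizing $\Nf$ as $\Nf_T$. For a non-tree edge $e_{g,a}$, the word $\nf{g}a$ represents $ga$ but differs from $\nf{ga}$, so it is reducible; as $\nf{g}$ is irreducible, the only applicable rule has left-hand side $\nf{g}a$, say $\nf{g}a\to v$ with $v=_G ga$. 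Writing $\nf{g}a=ws$ and $v=wt$ with $\ell(s),\ell(t)\le K$ (termination forces $w$ to be a proper prefix of $\nf{g}a$, namely $w=\nf{p}$ for the vertex $p$ it reaches), I would define $\Phi(e_{g,a})$ to run backwards along the tree from $g$ to $p$ and then forward from $p$ to $ga$ along $t$; this has the right endpoints and length at most $2K$, giving (F1), and $\Phi$ is the identity on tree edges for (F2). For part~(1), regularity of $R$ makes $\Nf$, $T$, and the labels $\lbl(\Phi(e_{g,a}))$ computable by finite-state automata via Theorem~\ref{thm:regclosure}, so $\Graph{\Phi}$ is regular.

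The hard part will be verifying (F3) in this reverse direction, that the flow function just built admits no infinite sequence of non-tree edges $e_1,e_2,\dots$ with $e_{i+1}$ in $\Phi(e_i)$. The plan is to transport the well-founded termination order of $R$ to the non-tree edges by tagging $e_{g,a}$ with the termination-rank of the reducible word $\nf{g}a$, and then to check that every non-tree edge appearing in the forward ``$t$-portion'' of $\Phi(e_{g,a})$ carries a strictly smaller rank, so that an infinite flow sequence would produce an infinite descending chain in the termination order of $R$. This bookkeeping is the genuinely delicate step, since it requires translating between words read from the base vertex $\groupid$, where rewriting lives, and paths read from the vertex $g$, where the flow function lives, and matching the single prefix-rewriting step $\nf{g}a\to v$ against the several edges it introduces; once this correspondence is in place, (F3) follows from convergence of $R$ and the equivalence is complete.
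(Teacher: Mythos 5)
The paper itself does not prove Theorem~\ref{thm:rewritingsystem}; it imports it from~\cite{BHH:algorithms}, so your proposal must be measured against that construction, which yours largely parallels. Your reverse direction is sound in outline: the identification of irreducibles with a prefix-closed normal form set, the factorization $\nf{g}a=ws'a$, $v=wt$ giving a path of length at most $2K$, and your plan for (F3) --- tagging the non-tree edge $e_{g,a}$ with the termination rank of $\nf{g}a$ and using the facts that a one-step prefix rewrite of $x$ survives appending any suffix $z$ (so the rank of $x$ is at most that of $xz$) --- is exactly the argument that works. The genuine gap is in the forward direction: your rule set $R=\{(\nf{g}a,\,\nf{g}\phi(\nf{g},a)) : e_{g,a}\notin T\}$ is \emph{not} convergent, because your claim that the longest normal-form prefix of a non-normal-form word is followed by a letter labelling a non-tree edge is false. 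The next letter can label a tree edge traversed \emph{backwards}: if $\nf{g}=\nf{ga}a^{-1}$ then $e_{g,a}\in T$ but $\nf{g}a\notin\Nf_T$, and since every proper prefix of $\nf{g}a$ is a normal form (hence not a left-hand side) and $\nf{g}a$ itself is not a left-hand side, the word $\nf{g}a$ is irreducible under your $R$. Concretely, for $G=\Z$ with $A=\{a,a^{-1}\}$ and the line as spanning tree, every edge lies in $T$, so $R=\emptyset$ and both $aa^{-1}$ and $\emptyword$ are irreducible words representing $\groupid$; the Normal Forms condition fails and the forward implication collapses as written.

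The repair --- and it is what the construction in~\cite{BHH:algorithms} actually does --- is to add the tree-backtracking rules $(ya^{-1}a,\,y)$ for all $y\in\Nf_T$ and $a\in A$ with $ya^{-1}\in\Nf_T$ (equivalently $(\nf{g}a,\nf{ga})$ whenever $e_{g,a}$ is a tree edge traversed backwards). These are bounded with tails of length at most $2$ and regular whenever $\Nf_T$ is, so parts (1) and (2) are unaffected on that score; but your termination argument then needs a supplement, since a backtracking rewrite replaces only tree edges by tree edges and leaves your multiset of ranks of non-tree edges \emph{unchanged} rather than decreased. Ordering words by the lexicographic pair consisting of your rank multiset followed by word length works: a stacking rewrite strictly decreases the first coordinate (the suffix path is unchanged because $\Phi$ preserves endpoints, and the prefix $\nf{g}$ traverses only tree edges), while a backtracking rewrite preserves the first coordinate and shortens the word by $2$. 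One further small point in your reverse direction: a bounded convergent system may contain several rules with the same left-hand side $\nf{g}a$ (boundedness only restricts each right-hand side to finitely many candidate tails $t$), so to define $\Phi$ at all, and to keep $\Graph{\Phi}$ regular in part (1), you should fix a regular selection, for instance the shortlex-least admissible tail.
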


A \emph{finite convergent rewriting system} for $G$ consists of a
finite set $A$ and a finite subset $R' \subset A^* \times A^*$
presenting $G$ as a monoid such that the regular bounded
prefix-rewriting system $R:=\{(yu,yv) \mid y \in A^*, (u,v) \in
R'\}$ is convergent. Thus, Theorem~\ref{thm:rewritingsystem} shows
that autostackability is a natural extension of finite convergent
rewriting systems, in which the choice of direction of rewritings of
bounded length subwords depends upon the prefix appearing before the
subword to be rewritten.



\subsection{Automatic groups and coset automaticity}\label{sub:cosetaut}


$~$

\medskip

In~\cite{Redfern:thesis}, Redfern introduced the notion of coset
automatic group, as well as the geometric condition of strong coset
automaticity (using different terminology), studied in more detail
by Holt and Hurt in~\cite{HoltHurt:Coset}.  We consider strong coset
automaticity in this paper.

Let $G$ be a group generated by a finite inverse-closed set $A$
and let $\Gamma=\Gamma_A(G)$ be the corresponding Cayley graph.
Let $d_\Gamma$ denote the path metric distance in $\Gamma$.
For any word $v$ in $A^*$ and integer $i \ge 0$,
let $v(i)$ denote the prefix of $v$ of length $i$; that is,
if $v=a_1 \cdots a_m$ with each $a_j \in A$, then
$v(i):=a_1 \cdots a_i$ if $i \le m$ and $v(i)=v$ if
$i \ge m$.

\begin{defn}\cite{HoltHurt:Coset}\label{def:cosetaut}
Let $G$ be a group, let $H$ be a subgroup of $G$, and let $K \ge 1$
be a constant. The pair $(G,H)$ satisfies the \emph{$H$--coset
$K$--fellow traveler property} if there exists a finite
inverse-closed generating set $A$ of $G$ and a language $L \subset
A^*$ containing a representative for each right coset $Hg$ of $H$ in
$G$, together with a constant $K \ge 0$, satisfying the property
that for any two words $v,w \in L$ with $d_{\Gamma_A(G)}(v,hw) \le
1$ for some $h \in H$, we have $d_{\Gamma_A(G)}(v(i), hw(i)) \le K$
for all $i \ge 0$. The pair $(G,H)$ is \emph{strongly coset
automatic} if there exists a finite inverse-closed generating set
$A$ of $G$ and a regular language containing a representative for
each right coset of $H$ in $G$
satisfying the $H$--coset $K$--fellow traveler property for some $K
\ge 0$.

The pair $(G,H)$ is \emph{strongly prefix-closed coset automatic}
if the language $L$ is also prefix-closed and contains exactly
one representative of each right coset.  Given a total
ordering on $A$, the pair $(G,H)$ is \emph{strongly shortlex coset automatic}
if in addition $L$ contains only the shortlex least representative
(using the shortlex ordering induced by the ordering on $A$)
of 
each coset.
\end{defn}

A group $G$ is \emph{automatic} if the pair $(G,\{\groupid\})$ is
strongly coset automatic. Prefix-closed automaticity and shortlex
automaticity are obtained similarly.

We also consider the 2-sided notions of fellow traveling
and automaticity in Sections~\ref{sub:relhyp} and~\ref{sec:cosetautgp}.
The group $G$ is \emph{biautomatic} if there is a regular
language $L \subset A^*$
containing a representative of each element of $G$
and a constant $K \ge 0$ such that
for any $v,w \in L$ and
$a \in A \cup \{\emptyword\}$ with $d_{\Gamma_A(G)}(av,w) \le 1$,
we have $d_{\Gamma_A(G)}(\tilde v(i), w(i)) \le K$ for all $i \ge 0$,
where $\tilde v:=av$.  The adjective \emph{shortlex} is added if
$L$ is the set of shortlex least representatives of
the elements of $G$.

Holt and Hurt show that in the case that
the language $L$ is the shortlex transversal
(i.e., the shortlex least representatives for the cosets),
the coset fellow-traveler property
yields regularity of the language.

\begin{thm}\cite[Theorem~2.1]{HoltHurt:Coset}\label{thm:shortlexcosetaut}
Let $H$ be a subgroup of $G$, and let $A$ be a finite inverse-closed
totally ordered generating set for $G$. If $(G,H)$ satisfies the
$H$--coset $K$--fellow traveler property over $A$ using the
shortlex transversal for the right cosets of $H$ in $G$, then the
pair $(G,H)$ is strongly shortlex coset automatic.
\end{thm}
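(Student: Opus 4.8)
The hypothesis already supplies, together with the shortlex transversal $L$ for the right cosets $H \backslash G$, both a set of coset representatives (exactly one per coset) and the $H$--coset $K$--fellow traveler property over $A$. Comparing with Definition~\ref{def:cosetaut}, the one thing that remains to be established is that $L$ is a \emph{regular} language over $A$; everything else in the definition of strong shortlex coset automaticity is immediate for the shortlex transversal. So the entire content of the statement is that the shortlex transversal is accepted by a finite state automaton, and the plan is to obtain this via the word-difference machinery of Epstein et al., adapted to cosets.

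First I would record the finiteness of the word differences. If $v,w \in L$ satisfy $d_\Gamma(v, hw) \le 1$ for some $h \in H$, then evaluating the fellow-traveler inequality at $i=0$, where $v(0)=w(0)=\emptyword$, forces $d_\Gamma(\groupid, h) \le K$, so the translating element $h$ ranges over the finite set $H \cap B_K$, where $B_K$ denotes the ball of radius $K$ about $\groupid$ in $\Gamma_A(G)$ (finite because $A$ is finite). For each prefix length $i$ set $g_i := v(i)^{-1} h\, w(i)$; the fellow-traveler inequality gives $d_\Gamma(\groupid, g_i) \le K$, i.e. $g_i \in B_K$, for every $i$, and passing from $i$ to $i+1$ replaces $g_i$ by $a^{-1} g_i b$, where $a,b$ are the next letters read on the two tapes (a padding symbol being read as no move). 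Hence the evolving difference is governed by a finite set of states.

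Next I would build, for each $a \in A \cup \{\emptyword\}$, the two-tape multiplier automaton $M_a$ over the padded alphabet $(A \cup \{\$\})^2$ that reads a pair $(v,w)$, nondeterministically guesses the bounded element $h \in H \cap B_K$, tracks the word difference $g_i \in B_K$ (crashing to a fail state if the difference ever leaves $B_K$), and accepts exactly when the terminal difference satisfies $v^{-1} h w =_G a$, equivalently $Hva = Hw$. By the previous paragraph each $M_a$ has finitely many states, hence is regular; taking $a=\emptyword$ gives a regular automaton for the ``same coset, bounded fellow travel'' relation. Combining $M_\emptyword$ with the standard finite-state shortlex comparator (which compares lengths and then locates the first lexicographic discrepancy, treating padding in the routine way) produces a regular two-tape language $R_0 = \{(v,w): Hv=Hw,\ w <_{\mathrm{slex}} v,\ \text{differences in } B_K\}$, whose projection $R := p_1(R_0)$ onto the first coordinate is regular by Theorem~\ref{thm:regclosure}.

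Finally I would prove that $L = A^* \setminus R$, which exhibits $L$ as regular and finishes the argument. The inclusion $L \subseteq A^* \setminus R$ is immediate: if $v \in L$ then no word $w$ with $Hw = Hv$ can satisfy $w <_{\mathrm{slex}} v$, so $v \notin R$ regardless of fellow traveling. The reverse inclusion is the crux and the step I expect to be the main obstacle, precisely because the fellow-traveler hypothesis is asserted only for pairs drawn from $L$, whereas a word $v \notin L$ need not lie in $L$. To handle it I would show that every non-representative $v$ admits a strictly shortlex-smaller word $w \in Hv$ that fellow-travels $v$ with word differences in a \emph{single fixed} finite set, so that $(v,w) \in R_0$ and hence $v \in R$. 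The natural candidate is the shortlex representative $w$ of $Hv$; using prefix closure of the shortlex transversal (if $u \in L$ then every prefix of $u$ lies in $L$, an easy shortlex argument) together with the fellow-traveler property applied to the genuinely-in-$L$ representatives of coset-adjacent prefixes, one argues that $v$ and $w$ stay uniformly close. The delicate point is keeping the closeness constant \emph{independent of} $\ell(v)$ rather than letting it accumulate through the induction, which I would control by arguing directly that the differences $v(i)^{-1} h\, w(i)$ remain in one fixed finite subset of $B_{K'}$ (a ``falsification by fellow traveler'' phenomenon for the shortlex transversal), so that the automaton $R_0$ certifies reducibility of every non-representative. With $L = A^* \setminus R$ regular, Definition~\ref{def:cosetaut} is satisfied and $(G,H)$ is strongly shortlex coset automatic.
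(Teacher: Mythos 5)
First, a remark on scope: the paper never proves this statement --- it is quoted verbatim from Holt and Hurt --- so your proposal can only be judged on its own terms, not against an in-paper argument. Your reduction of the theorem to regularity of the shortlex transversal $L$ is correct, and the machinery you assemble (guessing a translating element $h \in H$ in the ball $B_{\Gamma_A(G)}(K)$, tracking word differences in a finite state set, a shortlex comparator, projection via Theorem~\ref{thm:regclosure}) is the right toolkit. The genuine gap is exactly the step you flagged, and the repair you propose would fail: the inclusion $A^* \setminus L \subseteq R$ needs every $v \notin L$ to be fellow-traveled, within one \emph{fixed} constant, by some shortlex-smaller word in $Hv$, and your ``natural candidate'' --- the shortlex representative of $Hv$ itself --- does not do this, already for $H = \{\groupid\}$ and $G$ free. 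Take $v = x^n x^{-n} y$ in $F(x,y)$: the shortlex representative of the element it represents is the single letter $y$, and the word difference at time $n$ is $x^{-n}y$, of length $n+1$, which is unbounded in $n$. No induction scheme can keep the constant independent of $\ell(v)$ for this candidate; the ``accumulation'' you worry about is a symptom of the sub-claim being false, not of a clumsy proof.

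The missing idea is to localize at the \emph{first} prefix of $v$ that leaves $L$ and to modify only that prefix, keeping the suffix of $v$ intact. Since $L$ is prefix-closed, write $v = v_1 a v_2$ where $v_1 a$ is the shortest prefix of $v$ not in $L$, so that $v_1 \in L$; let $u$ be the shortlex representative of the coset $Hv_1a$, and take as witness $w := u v_2$, \emph{not} the shortlex representative of $Hv$. Then $u <_{\mathrm{slex}} v_1 a$ gives $w <_{\mathrm{slex}} v$ and $Hw = Hv$. The fellow-traveler hypothesis now applies to a pair that genuinely lies in $L \times L$, namely $(u, v_1)$, since $u =_G h v_1 a$ for some $h \in H$ and hence $d_{\Gamma_A(G)}(u, hv_1) \le 1$; this bounds the differences along the prefixes by $K$ and puts $h$ in $B_{\Gamma_A(G)}(K)$. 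The step your outline has no access to is the length comparison: $u$ is shortlex-least in $Hv_1a$ while the word $ua^{-1}$ represents an element of $Hv_1$, where $v_1$ is shortlex-least, so $\ell(v_1) - 1 \le \ell(u) \le \ell(v_1) + 1$. Consequently, along the common suffix $v_2$ the two words $v$ and $w$ are offset by at most two letters, and each difference $v(i)^{-1} h^{-1} w(i)$ is represented by a subword of $v_2$ of length at most $2$ (with a similarly bounded estimate on the at most two transitional values of $i$). All differences therefore lie in the ball of radius $K' := K + 3$. This is precisely the falsification-by-fellow-traveler statement you wanted, with a single uniform constant; building $R_0$ with $K'$ in place of $K$ makes your identity $L = A^* \setminus p_1(R_0)$ correct (your easy inclusion is unaffected by enlarging the constant), and the rest of your construction then goes through.
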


In their work on strong coset automaticity,
Holt and Hurt also describe
finite state automata that perform multiplication by
a generator in strongly shortlex coset automatic groups.
As we note in the next Proposition,
their construction works without
the shortlex ordering as well.
We provide some of the details of their construction
(with a slight modification),
in order to use them later
in the proof of Theorem~\ref{thm:cosetautostack}.
As above, $d_{\Gamma_A(G)}$ denotes the path metric distance
in the Cayley graph $\Gamma_A(G)$.  For any
radius $r \ge 0$, let
$$
B_{\Gamma_A(G)}(r) := \{g \in G \mid d_{\Gamma_A(G)}(\groupid,g) \le r\},
$$
be the set of vertices in the closed ball of radius $r$ in the Cayley graph.

\begin{prop}~\cite{HoltHurt:Coset}\label{prop:strongimpliesweak}
Let $H$ be a subgroup of a group $G$, and suppose that $(G,H)$ is
strongly coset automatic over a generating set $A$ of $G$ with
$H$--coset $K$--fellow traveling regular language $L$ of
representatives of the right cosets of $H$ in $G$. Then for each $h
\in H \cap B_{\Gamma_A(G)}(K)$
and $a \in A$, there is a finite state automaton $M_{h,a}$
accepting the set of padded words
corresponding to the set of word pairs
$$
L_{h,a}:=\{(x,y) \mid x,y \in L \text{ and } xa =_G hy\}.
$$
\end{prop}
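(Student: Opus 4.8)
The plan is to recognize $L_{h,a}$ as the intersection of the synchronously regular language $L \times L$ with the language of a \emph{word-difference automaton} that reads the two coordinates of the padded word for $(x,y)$ synchronously, and whose state after consuming $i$ symbol pairs records the group element $w_i := x(i)^{-1} h\, y(i)$. The point is that $xa =_G hy$ is equivalent to $x^{-1} h y =_G a$, so membership in $L_{h,a}$ is captured by a single ``final difference'' condition, while the strong coset automatic hypothesis keeps all intermediate differences inside a fixed finite ball.

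Concretely, I would take the state set of $M_{h,a}$ to be $B_{\Gamma_A(G)}(K) \cup \{\mathsf{fail}\}$, which is finite because $A$ is finite, and identify each non-fail state with the group element it names. The input alphabet is $(A \cup \{\$\})^2$, with the padding symbol $\$$ read as the identity $\groupid$ of $G$. From a state $w$, reading a pair $(\alpha,\beta)$ I send $w$ to $\alpha^{-1} w \beta$ when this element lies in $B_{\Gamma_A(G)}(K)$, and to $\mathsf{fail}$ otherwise; since $x(i+1)=x(i)\alpha$ and $y(i+1)=y(i)\beta$ (with $\$$ causing no move), this transition updates $w_i$ to $w_{i+1}=x(i+1)^{-1} h\, y(i+1)$ exactly. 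The start state is $h$, which is a legitimate state precisely because the hypothesis supplies $h \in H \cap B_{\Gamma_A(G)}(K)$ and which equals $w_0 = h$; the unique accept state is the element $a$, which lies in $B_{\Gamma_A(G)}(K)$ since $K \ge 1$.

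The crucial step, and the only place the strong coset automatic hypothesis enters, is to check that $M_{h,a}$ never rejects a pair it ought to accept. If $x,y \in L$ and $xa =_G hy$, then $hy =_G xa$ gives $d_{\Gamma_A(G)}(x, hy) = d_{\Gamma_A(G)}(x, xa) \le 1$, so the $H$--coset $K$--fellow traveler property yields $d_{\Gamma_A(G)}(x(i), h\, y(i)) \le K$ for every $i \ge 0$; as this distance is exactly the word length of $w_i$, the run stays inside $B_{\Gamma_A(G)}(K)$ and terminates in the state $x^{-1} h y = a$, so the pair is accepted. Conversely, any accepted word follows a run that avoids $\mathsf{fail}$ and ends at $a$, forcing $x^{-1} h y =_G a$, i.e.\ $xa =_G hy$. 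Intersecting the language of $M_{h,a}$ (a regular subset of $((A \cup \{\$\})^2)^*$) with the padded encoding of the synchronously regular language $L \times L$ (regular by Theorem~\ref{thm:regclosure}(4)) then enforces $x,y \in L$ and well-formedness of the padding, and regular languages are closed under intersection; hence $L_{h,a}$ is synchronously regular.

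I expect the main obstacle to be the boundedness argument together with the padding bookkeeping: one must confirm that $w_i$ is tracked correctly once the shorter of $x,y$ has been exhausted (so that $\$$ genuinely acts as $\groupid$), that the fellow traveler inequality is invoked with the correct fixed $h$ and holds for all indices $i$ including those past the length of the shorter word, and that identifying abstract states with the finitely many elements of $B_{\Gamma_A(G)}(K)$ is consistent with the left--right multiplication $w \mapsto \alpha^{-1} w \beta$ used in the transition rule.
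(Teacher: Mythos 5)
Your proposal is correct and is essentially the paper's own argument: both track the word difference $x(i)^{-1}h\,y(i)$ inside $B_{\Gamma_A(G)}(K)$ with a fail state, start at $h$, accept at $a$, treat $\$$ as the identity, and invoke the $H$--coset $K$--fellow traveler property (via $d_{\Gamma_A(G)}(x,hy)\le 1$) to guarantee the run never fails on pairs in $L_{h,a}$. The only cosmetic difference is that the paper builds the product automaton $(Q\times B_{\Gamma_A(G)}(K))\cup\{F\}$ directly, incorporating the automaton for $L\times L$ into the state set, whereas you keep the word-difference automaton separate and then intersect with the padded language of $L\times L$ --- which is the same construction, since the product automaton is exactly how closure under intersection is realized.
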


\begin{proof}
Regularity of the set $L$ together with closure
of regular languages under product
(Theorem~\ref{thm:regclosure}) implies that the language
$L \times L \subset A^* \times A^*$ is also regular.
Hence the set of padded words corresponding to
the pairs of words in $L \times L$ is accepted by
a finite state automaton $M$, with state set $Q$,
initial state $q_0$, accept states $P$, alphabet
$(A \cup \$)^2$, and transition function
$\delta:Q \times A \ra Q$.

Note that the $H$--coset $K$--fellow traveler property implies that
for all $(x,y)$ in $L_{h,a}$, we have $x(i)^{-1}hy(i) \in
B_{\Gamma_A(G)}(K)$ for all $i \ge 0$, and so we can also write
$$L_{h,a}=\{(x,y) \mid x,y \in A^*, x(i)^{-1}hy(i) \in B_{\Gamma_A(G)}(K)
   \text{ for all } i \ge 0,
   \text{ and } xa =_G hy\}.
$$

We construct a finite state automaton $M_{h,a}$ as follows. The set
of states of $M_{h,a}$ is $\widetilde Q:=(Q \times
B_{\Gamma_A(G)}(K)) \cup \{F\}$, the initial state is $\widetilde
q_0:=(q_0,h)$, the set of accept states is $\widetilde P:=P \times
\{a\}$, and the alphabet is $(A \cup \$)^2$.  The transition
function $\widetilde \delta:\widetilde Q \times A \ra \widetilde Q$
is defined by $\widetilde
\delta((q,g),(a,b)):=(\delta(q,a),a^{-1}gb)$ if $a^{-1}gb \in
B_{\Gamma_A(G)}(K)$ and $\widetilde \delta((q,g),(a,b)):=F$
otherwise, and $\widetilde \delta(F,(a,b)):=F$ (here if either $a$
or $b$ is $\$$, it is treated as the group identity in the
expression $a^{-1}gb$). The language of $M_{h,a}$ is $L_{h,a}$.
\end{proof}


\subsection{Graphs of groups}\label{sub:GoG}


$~$

\medskip

A general reference to the material in this section,
with an algebraic approach together with proofs of basic facts about
graphs of groups (e.g., invariance under change of
spanning tree, injectivity of the natural inclusion of $G_v$
and $G_e$,  existence of the Bass-Serre tree, etc.),
can be found in~\cite{Serre:trees}.
A more topological viewpoint on this
topic is given in~\cite{ScottWall}.

Let $\Lambda$ be a connected graph with vertex set $V$,
and directed edge set $\velam$.
Each undirected edge is considered to
underlie two directed edges with opposite
orientations.
%
For an edge $e\in \velam$, the symbol $\overline{e}$
denotes the directed edge associated
with the same undirected edge as $e$ but
with opposite orientation.
The initial vertex of $e$ will be called $i(e)$ and the
terminal vertex $t(e)$.

\begin{defn} A \emph{graph of groups} is a quadruple
$\gog=(\Lambda, \{G_v\}, \{G_e\}, \{\homom_e\})$, where
$\Lambda$ is a graph, $\{G_v\}$ is a collection of groups
indexed by $V$, $\{G_e\}$ is a collection of groups
indexed by $\velam$ subject to
the condition that for all $e \in \velam$,
$G_e=G_{\overline{e}}$, and
$\{\homom_e\}$ is a collection of injective
homomorphisms
$\homom_e\colon G_e\hookrightarrow G_{t(e)}$.
\end{defn}

\begin{defn}\label{def:gog}
Let $\gog=(\Lambda, \{G_v\}, \{G_e\}, \{\homom_e\})$
be a graph of groups and let
$\tlam$ be a spanning tree of $\Lambda$.
The \emph{fundamental group} of $\gog$ at $\tlam$, denoted
$\pi_1(\gog)=\pi_1(\gog,\tlam)$,
is the group generated by the union of all of the groups $G_v$
and the set $\velnot$ of edges in $\velam$ whose underlying
undirected edge is not in $\tlam$, with three types of relations:
\begin{enumerate}
\item $\overline{e}=e^{-1}$ for all $e \in \velnot$,
\item $\homom_e(g)=\homom_{\overline{e}}(g)$ for all
$e$ in $\velam \setminus \velnot$ and $g\in G_e$, and
\item $e\homom_e(g)e^{-1}=\homom_{\overline{e}}(g)$ for all
$e \in \velnot$ and $g\in G_e$.
\end{enumerate}
\end{defn}

The fundamental group of a graph of groups can be obtained
by iterated HNN extensions (corresponding to the edges in $\velnot$)
and amalgamated free products
(corresponding to edges in $\velam \setminus \velnot$).
It is also the fundamental group of the corresponding graph of
spaces formed from the disjoint union of
Eilenberg-MacLane spaces $\{K(G_v,1)\}_{v\in V}$ by
adding tubes corresponding to $\{K(G_e,1)\times I\}_{e\in \velam}$ and
gluing the tubes using identifications corresponding to the maps
$\homom_e$ and $\homom_{\overline{e}}$.


\subsection{Relatively hyperbolic groups}\label{sub:relhyp}


$~$

\medskip

Background and details on relatively hyperbolic groups used in this
paper can be found in~\cite{osin, Farb:relhyp, Hruska:relhyp,
AntolinCiobanu:relhyp}.

For a group $G$ with a finite inverse-closed generating set $A$,
let $\vec P_{\Gamma_{A}(G)}$ denote the set of directed paths in the
associated Cayley graph. Given $p,q \in \vec P_{\Gamma_A(G)}$,
write $\vst(p)$ for the group element labeling the initial
vertex of $p$, and $\vend(p)$ for the terminal vertex.
Given $\lambda \ge 1$ and $\epsilon \ge 0$,
the path $p$ is a \emph{($\lambda,\epsilon)$-quasigeodesic}
if for every subpath $r$ of $p$ the inequality
$\ell(r) \le \lambda d_{\Gamma_A(G)}(\vst(r),\vend(r))+\epsilon$
holds, where $d_{\Gamma_A(G)}$ is the path metric distance
in $\Gamma_A(G)$.

\begin{defn}
Let $G$ be a group with a finite inverse-closed generating set $A$
and let $\{H_1,...,H_n\}$ be a collection of proper subgroups of
$G$. Let $\Gamma_A(G)$ be the Cayley graph of $G$ with respect to
$A$. For each index $j$ let $\widetilde{H_j}$ be a set in bijection
with $H_j$, and let $\HH:=\coprod_{j=1}^n (\widetilde H_j \setminus
\{\groupid\})$.
\begin{itemize}
\item The \emph{relative} Cayley graph of $G$ relative
to $\HH$, denoted $\Gamma_{A\cup \HH}(G)$ is the Cayley
graph of $G$ with generating set
$A\cup \HH$ (with the natural map from $(A \cup \HH)^* \ra G$).
\item A path $p$ in $\Gamma_{A\cup \HH}(G)$ \emph{penetrates}
a left coset $gH_{j}$ if $p$ contains an edge labeled by a letter in
$\widetilde{H_j}$ connecting two vertices in $gH_{j}$.
\item An \emph{$H_j$--component} of a path $p$ in $\Gamma_{A\cup \HH}(G)$
is a nonempty subpath $s$ of $p$ labeled by a word in $\widetilde{H_j}^*$
that is not properly contained in a longer subpath of $p$ with label
in $\widetilde{H_j}^*$.
\item A path $p \in \vec P_{\Gamma_{A\cup \HH}(G)}$ is \emph{without backtracking}
if whenever $1 \le j \le n$ and
the path $p$
is a concatenation of subpaths $p=p'srs'p''$
with two $H_j$--components $s,s'$, then the initial vertices
$\vst(s)$, $\vst(s')$ lie in different left cosets of $H_j$
(intuitively, $p$ penetrates every left coset at most once).
\end{itemize}
\end{defn}

Geometrically, the relative Cayley graph collapses each left coset
of a subgroup in $\{H_1,...,H_n\}$ to a diameter $1$ subset.

The following definition is a slight modification
of the definition originally due to Farb~\cite{Farb:relhyp},
which is shown by Osin in~\cite[Appendix]{osin} to
be equivalent to both Farb's and Osin's definitions
of relative hyperbolicity for finitely generated groups.
Many other equivalent definitions can also be found in the
literature (see for example~\cite[Section 3]{Hruska:relhyp}
for a list of many of these).

\begin{defn}\label{def:relhyp}
Let $G$ be a group with a finite generating set $A$ and let
$\{H_1,...,H_n\}$ be a collection of proper subgroups. $G$ is
\emph{hyperbolic relative to} $\{H_1,...,H_n\}$ if:
\begin{enumerate}
\item $\Gamma_{A\cup\HH}(G)$ is Gromov hyperbolic, and
\item given any $\lambda\ge 1$ there exists
a constant $B(\lambda)$ such that for any $1 \le j \le n$ and any
two $(\lambda, 0)$--quasigeodesics $p, q \in \vec P_{\Gamma_{A\cup
\HH}(G)}$ without backtracking that satisfy $\vst(p)=\vst(q)$ and
$d_{\Gamma_{A}(G)}(\vend(p),\vend(q)) \le 1$, the following hold:
\begin{enumerate}
\item If $s$ is an $H_j$--component of $p$ and
the path $q$ does not penetrate the coset $\vst(s)H_j$, then
$d_{\Gamma_{A}(G)}(\vst(s),\vend(s)) < B(\lambda)$.
\item If $s$ is an $H_j$--component of $p$ and
$s'$ is an $H_j$--component of $q$ satisfying
$\vst(s)H_j=\vst(s')H_j$, then $d_{\Gamma_{A}(G)}(\vst(s),\vst(s'))
< B(\lambda)$ and $d_{\Gamma_{A}(G)}(\vend(s),\vend(s')) <
B(\lambda)$.
\end{enumerate}
\end{enumerate}
\end{defn}


Property (1) in Definition~\ref{def:relhyp} is sometimes called
\emph{weak relative hyperbolicity} and the property (2) is called
\emph{bounded coset penetration}. The collection $\{H_1,...,H_n\}$
is called the set of \emph{peripheral} or \emph{parabolic}
subgroups. A form of bounded coset penetration for
$(\lambda,\epsilon)$--quasigeodesics is given by Osin
in~\cite[Theorem~3.23]{osin}; we record that here for
use in Section~\ref{sec:cosetautgp}.

\begin{prop}\cite[Theorem~3.23]{osin}\label{prop:bcpwithepsilon}
Let $G$ be a group with a finite generating set $A$ that is
hyperbolic relative to $\{H_1,...,H_n\}$. Given any $\lambda\ge 1$
and $\epsilon \ge 0$ there exists a constant $B=B(\lambda,\epsilon)$
such that the statement of Definition~\ref{def:relhyp}(2) holds for
any two $(\lambda, \epsilon)$--quasigeodesics $p,q$.
\end{prop}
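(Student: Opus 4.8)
The plan is to deduce Proposition~\ref{prop:bcpwithepsilon} from the $\epsilon=0$ case already built into Definition~\ref{def:relhyp}(2), by absorbing the additive constant $\epsilon$ into the multiplicative constant $\lambda$. The elementary observation driving this is that if $r$ is a subpath of a $(\lambda,\epsilon)$-quasigeodesic whose endpoints are \emph{distinct} group elements, then $d(\vst(r),\vend(r))\ge 1$ (distinct vertices are at distance at least $1$ in either $\Gamma_A(G)$ or $\Gamma_{A\cup\HH}(G)$), so $\ell(r)\le \lambda\, d+\epsilon\le(\lambda+\epsilon)\,d$; the only subpaths that can violate the $(\lambda+\epsilon,0)$ inequality are those returning to their initial vertex, and such loops necessarily have length at most $\epsilon$. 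So I would first show that any $(\lambda,\epsilon)$-quasigeodesic without backtracking $p$ can be \emph{pruned} by excising its loops to produce a path $p'$ with the same endpoints that is a genuine $(\lambda+\epsilon,0)$-quasigeodesic without backtracking. Excision strictly decreases length, so the process terminates; a short check shows each excision preserves both the quasigeodesic inequality and the no-backtracking property, and that $p'$ penetrates only cosets already penetrated by $p$.

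Granting the pruning, the proof concludes quickly. Given $(\lambda,\epsilon)$-quasigeodesics $p,q$ without backtracking with $\vst(p)=\vst(q)$ and $d_{\Gamma_A(G)}(\vend(p),\vend(q))\le 1$, I prune both to $(\lambda+\epsilon,0)$-quasigeodesics $p',q'$ without backtracking with the same endpoints, and apply Definition~\ref{def:relhyp}(2) with parameter $\lambda':=\lambda+\epsilon$ to the pair $(p',q')$, obtaining the constant $B(\lambda')$. For part (a), if $q$ misses a coset $\vst(s)H_j$ then so does $q'$, since pruning only removes penetrations, and the $\epsilon=0$ statement bounds the $\Gamma_A$-depth of the corresponding component of $p'$; for part (b) the components of $p',q'$ in a common coset have $\Gamma_A$-close endpoints. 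The remaining task is to transfer these bounds from $p',q'$ back to $p,q$, which reduces to showing that every \emph{deep} component of $p$, meaning one with large $d_{\Gamma_A(G)}(\vst(s),\vend(s))$, survives pruning as a component of $p'$ in the same coset with $\Gamma_A$-close endpoints. Setting $B(\lambda,\epsilon)$ equal to $B(\lambda+\epsilon)$ plus the transfer error plus the depth threshold then settles both parts.

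I expect the main obstacle to be exactly this last transfer, and specifically the possibility that a deep component $s$ of $p$ lies \emph{inside} an excised loop and is therefore destroyed by pruning: since $p$ penetrates each coset at most once, such an $s$ would be the \emph{only} penetration of $\vst(s)H_j$ by $p$, so $p'$ would miss that coset entirely and the depth of $s$ would be invisible to the $\epsilon=0$ theory, breaking part (a). I would rule this configuration out as follows. A deep component $s\colon u\to w$, with $u,w$ in a common coset $gH_j$, that is contained in a loop yields a complementary arc $\tau$ of $p$ from $u$ to $w$ of length at most $\epsilon$ that avoids $gH_j$ (by no backtracking). Comparing the single relative edge from $u$ to $w$ labeled by $w^{-1}u\in\widetilde{H_j}$, which penetrates $gH_j$, against $\tau$, which does not, Definition~\ref{def:relhyp}(2)(a) forces $d_{\Gamma_A(G)}(u,w)<B$, contradicting depth. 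The delicate point is that to invoke the $\epsilon=0$ statement I must first know that $\tau$ can itself be pruned to a $(\lambda+\epsilon,0)$-quasigeodesic without backtracking whose own deep components are controlled; since $\tau$ has strictly smaller length than the loop, this is handled by an induction on the length of the loop, equivalently on $\lceil\epsilon\rceil$, with the base case trivial because a component joins distinct vertices. As an alternative to this bookkeeping, the Gromov hyperbolicity of $\Gamma_{A\cup\HH}(G)$ from Definition~\ref{def:relhyp}(1), via stability of quasigeodesics, supplies the component correspondence by comparing $p$ directly with a relative geodesic, and I would adopt whichever route proves cleaner in the write-up.
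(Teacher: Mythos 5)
Your proposal takes a genuinely different route from the paper, which in fact offers no proof at all: Proposition~\ref{prop:bcpwithepsilon} is simply quoted from \cite[Theorem~3.23]{osin}, where it is established by Osin's own machinery rather than by a reduction to the $\epsilon=0$ case. Reducing to the $(\lambda,0)$ statement that Definition~\ref{def:relhyp}(2) takes as an axiom is a natural strategy within this paper's framework, and much of your outline is sound: loops in a $(\lambda,\epsilon)$-quasigeodesic have relative length at most $\epsilon$; excising them preserves the quasigeodesic inequality and (given no backtracking of the original path) the no-backtracking property; the fully pruned path is a $(\lambda+\epsilon,0)$-quasigeodesic; and your complementary-arc versus single-relative-edge comparison correctly shows that any component swallowed by an excised loop has endpoints at $d_{\Gamma_A(G)}$-distance less than $B(\lambda+\epsilon)$. (Incidentally, the induction on $\lceil\epsilon\rceil$ you anticipate is unnecessary: once pruned, the complementary arc is loop-free of relative length at most $\epsilon$, hence automatically a $(\lambda+\epsilon,0)$-quasigeodesic, and Definition~\ref{def:relhyp}(2)(a) applies directly against the single edge.) Together with the same trick applied to the partial loops that truncate a surviving component, this settles part (a) completely.

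The genuine gap is in part (b). You price the final constant as $B(\lambda+\epsilon)$ plus transfer error plus depth threshold, on the grounds that deep components survive pruning; that accounting works for part (a), whose conclusion concerns a single component, but not for part (b), whose conclusion bounds the distance \emph{between} a component $s$ of $p$ and a component $s'$ of $q$ lying in the same coset $gH_j$. If $s$ lies inside an excised loop --- which, by your own argument, happens only when $s$ is shallow --- then $p'$ does not penetrate $gH_j$ at all, so Definition~\ref{def:relhyp}(2) applied to $(p',q')$ carries no information about where $s$ sits; and shallowness of $s$ and of $s'$ separately gives no bound whatsoever on $d_{\Gamma_A(G)}(\vst(s),\vst(s'))$: a priori the two paths could touch the coset at far-apart locations, which is exactly what (b) forbids. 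This case is repairable, but it needs an argument absent from your proposal, for instance: compare the path obtained by following $p'$ to the basepoint of the excised loop and then the loop arc into $\vst(s)$ (which does not penetrate $gH_j$) against the path obtained by following $q'$ to the surviving component and then appending a single $\widetilde{H_j}$-edge into $\vst(s)$ (which does penetrate), and invoke Definition~\ref{def:relhyp}(2)(a); one must then separately handle the degenerate case in which the appended edge itself closes a loop and is pruned away, by comparing that edge with the subpath of $q'$ between the two relevant vertices. Without something of this kind the proof establishes (a) but not (b). A final caution: your fallback via hyperbolicity of $\Gamma_{A\cup\HH}(G)$ and quasigeodesic stability yields closeness only in the relative metric, not the $\Gamma_A(G)$-bounds that bounded coset penetration demands, so it is not a substitute --- that transition is precisely the hard content of Farb's and Osin's original arguments.
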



\begin{rmk}\label{rmk:conjugateisperipheral}
We note that if a subgroup $H$ of $G=\langle A \rangle$ is
a peripheral subgroup in a relatively hyperbolic structure
for $G$ (that is, $G$ is hyperbolic relative to
$\{H_1,...,H_n\}$ and $H=H_i$ for some $i$), and
if $g$ is any element of $G$, then the conjugate
subgroup $gHg^{-1}$ is also a peripheral subgroup
in a relatively hyperbolic structure for $G$
(namely $\{gH_1g^{-1},...,gH_ng^{-1}\}$).  In particular, the isometry
$\Gamma_{A \cup \HH}(G) \ra
\Gamma_{gAg^{-1} \cup (\cup (\widetilde{gH_jg^{-1}} \setminus \{\groupid\}))}(G)$
preserves both hyperbolicity and bounded coset penetration.
\end{rmk}

When a finitely generated group $G$ is hyperbolic relative to
$\{H_1,...,H_n\}$, each of the subgroups $H_j$ is also finitely
generated~\cite[Proposition~2.29]{osin}. Since relative
hyperbolicity of the pair $(G,\{H_1,...,H_n\})$ is independent of
the finite generating set for $G$~\cite[Theorem~2.34]{osin}, for the
remainder of the paper we assume that for any relatively hyperbolic
group that $A\cap H_j$ generates $H_j$ for all $1 \le j \le n$.

\begin{defn}\cite[Construction~4.1]{AntolinCiobanu:relhyp}
Let $p$ be a path in $\Gamma_A(G)$ with label
$w \in A^*$, and write $w=w_0u_1w_1 \cdots u_nw_n$
with each $w_k \in (A \setminus (\cup_{j=1}^n (A \cap H_j)))^*$
and each $u_k \in (A \cap H_{j_k})^*$ for some index
$j_k$, such that whenever $w_i=\emptyword$ and
$x$ is the first letter of $u_{i+1}$, then $u_ix$
does not lie in $(A \cap H_j)^*$ for any $j$.
The path $p$ has
\emph{no parabolic shortenings} if for each $k$ the subpath
labeled by the subword
$u_k$ is a geodesic in the subgraph
$\Gamma_{A\cap H_{j_k}}(H_{j_k})$.
If the path $p$ has no parabolic shortenings, let
$\hat{p}$ be the path in $\Gamma_{A\cup\HH}(G)$
with label $w_0h_1w_1 \cdots h_nw_n$ where
for each $k$ the symbol $h_k$ denotes the letter
in $\HH$ representing the element $u_k$ of $H_{j_k}$;
the path $\hat{p}$
is the path \emph{derived from} $p$.
\end{defn}

Antolin and Ciobanu studied geodesics and language theoretic
properties of relatively hyperbolic groups
in~\cite{AntolinCiobanu:relhyp}. For example, they show that every finitely
generated relatively hyperbolic group has a finite generating set,
$A$, such that each $\Gamma_{A\cap H_j}(H_j)$ isometrically embeds
in $\Gamma_A(G)$~\cite[Lemma~5.3]{AntolinCiobanu:relhyp} (in fact every
finite generating set can be extended to one with this property).
We apply the following results from their paper in Section~\ref{sec:cosetautgp}.

\begin{defn}\label{def:nice}
Let $G$ be a finitely generated group hyperbolic relative to
$\{H_1,...,H_n\}$ and suppose that $\lambda \ge 1$ and $\epsilon \ge
0$. A finite inverse-closed generating set $A$ for $G$ is called
\emph{$(\lambda,\epsilon)$--\nice} if
\begin{enumerate}
\item every path derived from a  geodesic in $\Gamma_A(G)$
is a $(\lambda, \epsilon)$--quasigeodesic in $\Gamma_{A\cup\HH}(G)$
without backtracking,
\item $H_j=\langle A \cap H_j \rangle$ for all $j$, and
\item for every total ordering on $A$
satisfying the property that $H_j$ is shortlex biautomatic
on $A\cap H_j$ (with the restriction of the ordering
from $A$) for all $j$, the group $G$ is shortlex biautomatic over $A$
with respect to that ordering.
\end{enumerate}
\end{defn}

\begin{thm}\cite[Lemma~5.3, Theorem~7.7]{AntolinCiobanu:relhyp}\label{thm:quasigeodesic}
Let $G$ be a group with finite generating set $A'$ that is
hyperbolic relative to $\{H_1,...,H_n\}$. Then there are constants
$\lambda\ge 1$ and $\epsilon\ge 0$ and a finite subset $\HH'$ of
$\HH$ such that every finite generating set $A$ of $G$ satisfying
$A' \cup \HH' \subseteq A \subseteq A' \cup \HH$ is a
$(\lambda,\epsilon)$--{\nice} generating set.
\end{thm}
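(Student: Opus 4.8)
The plan is to establish, for a single good choice of $\lambda \ge 1$, $\epsilon \ge 0$, and finite $\HH' \subseteq \HH$, all three conditions of Definition~\ref{def:nice} simultaneously for every $A$ with $A' \cup \HH' \subseteq A \subseteq A' \cup \HH$. The organizing observation is that the whole family shares one relative Cayley graph: from $A' \subseteq A$ we get $A' \cup \HH \subseteq A \cup \HH$, and from $A \subseteq A' \cup \HH$ we get $A \cup \HH \subseteq A' \cup \HH$, so $A \cup \HH = A' \cup \HH$ for every admissible $A$. Hence $\Gamma_{A \cup \HH}(G)$, its Gromov hyperbolicity, and the bounded-coset-penetration constants $B(\lambda)$ and $B(\lambda,\epsilon)$ of Definition~\ref{def:relhyp} and Proposition~\ref{prop:bcpwithepsilon} do not depend on $A$. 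Together with the two-sided metric comparison $d_{\Gamma_{A \cup \HH}(G)} \le d_{\Gamma_A(G)} \le d_{\Gamma_{A'}(G)}$ coming from $A' \subseteq A \subseteq A' \cup \HH$, this is what will let me choose the constants uniformly across the family.

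I would first fix $\HH'$ and dispose of condition (2). By \cite[Proposition~2.29]{osin} each $H_j$ is finitely generated, and by \cite[Lemma~5.3]{AntolinCiobanu:relhyp} the set $A'$ can be enlarged by a finite subset of $\HH$ so that each $\Gamma_{A \cap H_j}(H_j)$ embeds isometrically in $\Gamma_A(G)$. I put into $\HH'$ the (finitely many) parabolic letters realizing both requirements: a finite generating set of each $H_j$ and the extra letters demanded by the embedding lemma. Then for every admissible $A$ the set $A \cap H_j$ generates $H_j$, giving condition (2), and the isometric embeddings hold. Next I would verify condition (1), the quasigeodesic condition, which is the geometric heart. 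Let $p$ be a geodesic in $\Gamma_A(G)$ and let $u_k$ be one of its maximal parabolic syllables, labelling a subpath between vertices $g$ and $gu_k$ with $u_k \in H_{j_k}$. Since subpaths of geodesics are geodesic and the metric is left invariant, $d_{\Gamma_A(G)}(\groupid, u_k) = \ell(u_k)$; combined with the isometric embedding this forces $u_k$ to be geodesic in $\Gamma_{A \cap H_{j_k}}(H_{j_k})$, so $p$ has no parabolic shortenings and the derived path $\hat p$ in $\Gamma_{A \cup \HH}(G)$ is defined.

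That $\hat p$ is a $(\lambda,\epsilon)$--quasigeodesic is proved by bounding, for each subpath, its number of non-parabolic edges and of parabolic syllables by the relative distance between its endpoints; weak relative hyperbolicity and the penetration estimates of Proposition~\ref{prop:bcpwithepsilon} supply these bounds, and since they involve only the fixed relative graph and $B(\lambda,\epsilon)$ the constants are uniform over the family. The property of being without backtracking follows from geodesy of $p$: if $\hat p$ penetrated a left coset $gH_j$ at two components $s, s'$ with $\vst(s)$ and $\vst(s')$ in the same coset, the segment of $p$ between them could be rerouted inside $gH_j$ without increasing length, contradicting that $p$ is $\Gamma_A(G)$--geodesic; bounded coset penetration makes this quantitative.

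Finally, condition (3), shortlex biautomaticity of $G$ over $A$ whenever each $H_j$ is shortlex biautomatic over $A \cap H_j$, is \cite[Theorem~7.7]{AntolinCiobanu:relhyp} and is the step I expect to be the main obstacle, since it must produce a regular language of shortlex normal forms together with multiplier automata while respecting both the coset geometry and every admissible ordering on $A$. The strategy, building on Rebbecchi's inheritance of biautomaticity under relative hyperbolicity, is to read off from the relative geodesic $\hat p$ a normal form for $g$ by substituting, for each parabolic letter, the shortlex normal form over $A \cap H_{j_k}$ of the element it represents; condition (1) ensures these concatenated words track relative geodesics, and bounded coset penetration yields the synchronous two-sided fellow-traveling of the normal forms of $g$ and $ga$. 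Regularity of the resulting language and of the multiplier automata then follows from regularity of the parabolic shortlex structures together with the closure properties of Theorem~\ref{thm:regclosure}. The delicate point, and the reason I single out this step, is verifying the fellow-traveler property for the spliced normal forms while keeping every constant independent of the choice of $A$ in the admissible range; once this is done, conditions (1)--(3) show that each such $A$ is $(\lambda,\epsilon)$--{\nice}.
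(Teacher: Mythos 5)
The first thing to note is that the paper does not prove this statement at all: Theorem~\ref{thm:quasigeodesic} is imported verbatim from Antolin--Ciobanu, with conditions (1)--(2) of Definition~\ref{def:nice} being their Lemma~5.3 and condition (3) being their Theorem~7.7, so there is no internal proof to compare against. Where your proposal defers to those same sources --- Osin for finite generation of the $H_j$, Lemma~5.3, and Theorem~7.7 for shortlex biautomaticity --- it is doing exactly what the paper does, and your structural observation that $A\cup\HH = A'\cup\HH$ for every admissible $A$ (so the whole family shares one relative Cayley graph, hence one set of hyperbolicity and penetration constants) is correct and is indeed why uniform constants are possible.

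The problem lies in the portion you prove yourself, namely condition (1), and there are two genuine gaps there. First, the no-backtracking argument fails as stated: if the derived path $\hat p$ has two $H_j$--components whose initial vertices lie in the same coset, the isometric embedding of $\Gamma_{A\cap H_j}(H_j)$ lets you reroute the intervening segment of $p$ through the coset with length \emph{at most} that of the original segment, but equality is entirely possible, and a rerouting of equal length produces merely another geodesic, not a contradiction with geodesy of $p$. To force a strict shortening one must know that the relevant parabolic elements are single letters of $A$, i.e.\ that they were placed in $\HH'$; choosing $\HH'$ to contain all parabolic elements up to a radius dictated by the bounded-coset-penetration constants is precisely the technical heart of Antolin--Ciobanu's proof of Lemma~5.3, and it is absent from your sketch. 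Second, your quasigeodesicity argument is circular: Proposition~\ref{prop:bcpwithepsilon} applies only to pairs of paths that are already known to be quasigeodesics without backtracking, so it cannot be the tool that establishes that $\hat p$ is such a path. Both gaps disappear if you cite Lemma~5.3 of Antolin--Ciobanu at full strength --- it asserts the quasigeodesic-without-backtracking conclusion with uniform $(\lambda,\epsilon)$ for all admissible $A$, not merely the isometric embedding --- at which point your proposal reduces to exactly the paper's treatment, a citation of Lemma~5.3 together with Theorem~7.7.
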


\begin{rmk}\label{rmk:toralhypisnice}
In~\cite[Theorem~4.3.1]{wordprocessing}, Holt shows that
every finitely generated abelian group is shortlex
automatic over every finite generating set with respect to
every ordering of that set; moreover,
the structure is also biautomatic.  Combining this with
Theorem~\ref{thm:quasigeodesic} implies
that any finitely generated
group hyperbolic relative to abelian subgroups
is shortlex biautomatic (on a $(\lambda,\epsilon)$--{\nice}
generating set), and hence autostackable.
\end{rmk}


\subsection{3-manifolds}\label{sub:3mfld}


$~$

\medskip

We review some important facts about 3-manifolds that will be used
later in the paper. For background, an interested reader can consult
\cite{Neumann:3mflds, Scott:3mflds, thurston}. Let $M=M^3$ be a
connected, compact, orientable, three-dimensional manifold with
\emph{incompressible toral boundary}; that is, the boundary of $M$
consists of a finite number of incompressible ({i.e.},
$\pi_1$--injective) tori.

\begin{defn}
A 3-manifold $M=M^3$ is called \emph{prime} if whenever $M$
is a connected sum $M\cong M_1\#M_2$, then one of $M_1$ or $M_2$
is homeomorphic to $S^3$.
\end{defn}

Decomposing the compact 3-manifold $M$  along a disjoint collection
of $S^2$'s via the connected sum operation,
there is a decomposition $M=M_1\#\cdots\#M_k$, where each of the
$M_i$ are prime, which is unique up to reordering. This gives a
decomposition of $\pi_1(M)$ as a free product of the fundamental
groups of its prime factors.

If $M$ is prime then, using Thurston's Geometrization Conjecture (proved
by Perelman; see, e.g.,~\cite{morgantian}), either $M$ admits a
geometric structure based on one of $S^3, S^2\times \R,
\mathbb{E}^3, \mathbb{H}^2\times \R, \widetilde{PSL_2}, Nil, Sol$ or
$\mathbb{H}^3$, in which case $M$ is called \emph{geometric}, or
else $M$ contains an incompressible torus.

In the nongeometric case, geometrization says that $M$ can be split
along a collection $\{T_i\}$ of non-isotopic, incompressible,
two-sided tori in $M$ in such a way that every connected component of
$M\setminus\cup N(T_i)$ (where each $N(T_i)$ is an open regular
neighborhood of $T_i$), known as a \emph{piece}, has interior
admitting a geometric structure with finite volume; this is commonly
referred to as a JSJ decomposition.
Moreover each piece in $M\setminus\cup N(T_i)$ is either Seifert
fibered or atoroidal, and  again by geometrization,
the atoroidal pieces have interior that is
hyperbolic. In this nongeometric case the fundamental
group $\pi_1(M)$ decomposes as
the fundamental group of a graph of groups
whose vertex groups are the fundamental groups of the pieces and whose
edge groups are all $\Z^2$ subgroups corresponding to
fundamental groups of the tori $T_i$ in the decomposition.

Now suppose that $N$ is a piece from the JSJ decomposition in
the nongeometric case, or else that $N=M$ in the geometric case.  
Then $N$ is also a compact 3-manifold with incompressible toral boundary.
A collection of $\Z^2$ subgroups arising from this boundary, one for
each free homotopy class of boundary component of $N$, or
equivalently one for each conjugacy class of $\Z^2$ subgroup, is a
collection of \emph{peripheral subgroups} of $\pi_1(N)$.

If $N$ is a Seifert fibered $3$-manifold with boundary, then
$N$ is a circle bundle over a two-dimensional orbifold
with boundary. Consequently, $\pi_1(N)$ is an extension
of the orbifold fundamental group of that
$2$-dimensional orbifold by $\Z$ (see~\cite[Lemma~3.2]{Scott:3mflds}
for more details).
On the other hand, if the interior of $N$ is a finite volume
hyperbolic $3$-manifold, then $\pi_1(N)$ is
hyperbolic relative to a collection of peripheral
($\Z^2$) subgroups (see~\cite[Theorem~5.1]{Farb:relhyp}).


\section{Autostackability for graphs of groups}\label{sec:autstkgog}


In this section we prove Theorem~\ref{thm:GoGautostack}, the closure
of autostackability under the construction of fundamental groups of
graphs of groups, in the case that the vertex groups are
autostackable {\rsp} their respective edge groups.

We begin by noting that a small extension of the
proof that autostackability is invariant
under increasing the generating
set~\cite[Proposition~4.3]{HermillerMartinez:HNN}
yields the following Lemma, which will be useful in our
closure proof.

\begin{lem}\label{lem:extendgen}
Let $G$ be autostackable over a generating
set $A$ {\rsp} a subgroup $H$, and let $A' \supseteq A$
be another finite inverse-closed generating set for $G$.
Then $G$ is also autostackable over $A'$ {\rsp} $H$.
\end{lem}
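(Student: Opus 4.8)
The plan is to keep the normal forms and spanning tree unchanged and merely extend the flow function to the new generators, following the proof of~\cite[Proposition~4.3]{HermillerMartinez:HNN} and then checking that the two \rsp\ conditions survive. Since $A \subseteq A'$, the Cayley graph $\Gamma_A(G)$ is a subgraph of $\Gamma_{A'}(G)$ on the same vertex set $G$, so the spanning tree $T_G$ (whose edges are all labeled by letters of $A$) remains a spanning tree of $\Gamma_{A'}(G)$ and the normal form set $\Nf$ is unchanged. For each new letter $a \in A' \setminus A$ I would fix a word $w_a \in A^*$ with $w_a =_G a$, and define $\Phi' \colon \vec E_{A'}(G) \to \vec P_{A'}(G)$ by $\Phi'(e_{g,a}) := \Phi(e_{g,a})$ when $a \in A$ and by letting $\Phi'(e_{g,a})$ be the path with initial vertex $g$ and label $w_a$ when $a \in A' \setminus A$.

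I would verify the flow axioms as follows. Condition (F1) is immediate: the path labeled $w_a$ from $g$ ends at the vertex $ga$ (since $w_a =_G a$) and has length $\ell(w_a)$, so $\Phi'$ has bound $\max\{K,\ \max_{a \in A' \setminus A}\ell(w_a)\}$. Condition (F2) holds because every edge of $T_G$ is labeled by a letter of $A$, where $\Phi'$ agrees with $\Phi$. The main point to check is (F3): since each $w_a$ lies in $A^*$, the path $\Phi'(e_{g,a})$ associated to a new edge consists entirely of $A$-labeled edges, and likewise $\Phi'(e_{g,a}) = \Phi(e_{g,a})$ is an $A$-labeled path for $a \in A$. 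Hence in any flow sequence $e_1, e_2, \ldots$ with each $e_{i+1}$ an edge of $\Phi'(e_i)$, every edge from $e_2$ onward is labeled by $A$; such a tail is a flow sequence for the original $\Phi$, so (F3) for $\Phi$ forbids it from being infinite, and therefore $\Phi'$ satisfies (F3) as well.

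For regularity I would note that
\[
\Graph{\Phi'} = \Graph{\Phi} \;\cup\; \bigcup_{a \in A' \setminus A} \bigl(\Nf \times \{a\} \times \{w_a\}\bigr).
\]
The set $\Graph{\Phi}$ is regular by hypothesis, and since $\Nf$ is regular and each $\{a\}$, $\{w_a\}$ is a singleton, each summand is a regular subset of $((A')^*)^3$ by Theorem~\ref{thm:regclosure}; as the union is finite, $\Graph{\Phi'}$ is regular.

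Finally I would confirm the two \rsp\ conditions using the same subgroup generating set $B \subseteq A \subseteq A'$ and the same trees $T_H$ and $T'$. Subgroup closure is inherited directly, since every edge $e_{h,b}$ with $b \in B \subseteq A$ satisfies $\lbl(\Phi'(e_{h,b})) = \lbl(\Phi(e_{h,b})) \in B^*$. For $H$--translation invariance the decomposition $T_G = T_H \cup (\cup_{h \in H} hT')$ and the disjointness/intersection conditions are unchanged, and for the label-preservation clause an edge $e_{g,a}$ with $a \in A' \setminus A$ never lies in $\Gamma_B(H)$ (as $a \notin B$, even when $g \in H$) and satisfies $\lbl(\Phi'(e_{g,a})) = w_a = \lbl(\Phi'(e_{hg,a}))$ for every $h \in H$, because $w_a$ does not depend on the initial vertex; for $a \in A$ the clause is inherited from $\Phi$. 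I expect the only genuinely substantive step to be the verification of (F3), where one must rule out infinite flow arising from the interaction of the new edges with the old flow — which the choice of $A^*$-labeled routing paths resolves, since it immediately pushes every new edge back into the original structure.
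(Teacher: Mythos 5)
Your proof is correct and follows essentially the same route as the paper: keep the normal forms and spanning tree unchanged, extend the flow function by sending each new edge $e_{g,a}$ with $a \in A' \setminus A$ to a path labeled by a fixed word in $A^*$ representing $a$ (the paper specifically takes $\nf{a}$, but any such word works, as you argue), and observe that (F1)--(F3), regularity of the graph, and the two {\rsp}-$H$ conditions are all inherited. The paper's proof is only a three-sentence sketch, and your verification fills in exactly the details it leaves implicit.
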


\begin{proof}
The set of normal forms over $A'$
is taken to be the same as the normal form set over $A$,
and the flow function on edges with labels in $A$
is also unchanged.
The flow function maps edges labeled by any letter
$a \in A' \setminus A$ to paths labeled by $\nf{a}$.
\end{proof}

\begin{lem}\label{lemma:respnf}
Let $G$ be autostackable over a generating set $A$ {\rsp} a subgroup
$H$ with generating set $B \subseteq A$. Suppose that the
set of normal forms is
$\Nf_{G}=\Nf_{H}\Nf_{\Tr}$, where
$\Nf_{H}$ is a set of normal forms for $H$ over $B$ and $\Nf_{\Tr}$
is a set of normal forms over $A$ for a right transversal of $H$ in
$G$. Suppose further that $\Nf_G$ is regular and prefix-closed.  Then
$$
\Nf_{H} = \Nf_{G} \cap B^*
\hspace{.2in} \text{ and } \hspace{.2in}
\Nf_{\Tr} = \{\emptyword\} \cup
    \big(\Nf_{G} \cap [(A \setminus B) \cdot A^*]\big),
$$
and both $\Nf_{H}$ and $\Nf_{\Tr}$
are also regular prefix-closed languages.
\end{lem}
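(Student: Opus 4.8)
The plan is to prove the two displayed set equalities by double inclusion and then read off regularity and prefix-closure from the closure properties in Theorem~\ref{thm:regclosure}, using throughout the fact that the decomposition $\Nf_{G}=\Nf_{H}\Nf_{\Tr}$ supplied by an autostackable {\rsp} $H$ structure has $\Nf_{H}$ and $\Nf_{\Tr}$ prefix-closed. The first step is to record the basepoint facts. Since $\emptyword=\nf{\groupid}\in\Nf_{G}=\Nf_{H}\Nf_{\Tr}$ and the empty word factors only as $\emptyword\cdot\emptyword$, we get $\emptyword\in\Nf_{H}$ and $\emptyword\in\Nf_{\Tr}$; in particular $\emptyword$ is the unique $\Nf_{\Tr}$--representative of the trivial coset $H$. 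Using $\emptyword\in\Nf_{\Tr}$ and $\emptyword\in\Nf_{H}$ respectively, we obtain the ``easy'' containments $\Nf_{H}=\Nf_{H}\{\emptyword\}\subseteq\Nf_{G}$ (hence $\Nf_{H}\subseteq\Nf_{G}\cap B^{*}$, as $\Nf_{H}\subseteq B^{*}$ by hypothesis) and $\Nf_{\Tr}=\{\emptyword\}\Nf_{\Tr}\subseteq\Nf_{G}$.

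The heart of the argument is the single claim that no nonempty element of $\Nf_{\Tr}$ begins with a letter of $B$. Indeed, if $v\in\Nf_{\Tr}$ had first letter $b\in B$, then prefix-closure of $\Nf_{\Tr}$ would place the length-one prefix $b$ in $\Nf_{\Tr}$; but $b$ represents an element of $H=\gspan{B}$, so $b$ is the $\Nf_{\Tr}$--representative of the coset $H$, and uniqueness of coset representatives forces $b=\emptyword$, a contradiction. Granting this claim, all remaining inclusions follow by bookkeeping with the factorization. For the reverse inclusion of the first equality, take $w\in\Nf_{G}\cap B^{*}$ and write $w=uv$ with $u\in\Nf_{H}\subseteq B^{*}$ and $v\in\Nf_{\Tr}$; then $v\in B^{*}$, and the claim forces $v=\emptyword$, so $w=u\in\Nf_{H}$. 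For the second equality, the claim gives that every nonempty $v\in\Nf_{\Tr}$ lies in $(A\setminus B)\cdot A^{*}$ and (by the basepoint facts) in $\Nf_{G}$, yielding $\Nf_{\Tr}\subseteq\{\emptyword\}\cup(\Nf_{G}\cap[(A\setminus B)\cdot A^{*}])$; conversely, if $w\in\Nf_{G}\cap[(A\setminus B)\cdot A^{*}]$ is written $w=uv$ with $u\in\Nf_{H}\subseteq B^{*}$, then $u\neq\emptyword$ would make $w$ begin with a letter of $B$, so $u=\emptyword$ and $w=v\in\Nf_{\Tr}$.

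Finally, regularity and prefix-closure are immediate. The languages $B^{*}$, $(A\setminus B)\cdot A^{*}$, and $\{\emptyword\}$ are regular, so Theorem~\ref{thm:regclosure} (closure under intersection, and under union with a finite set) gives regularity of $\Nf_{G}\cap B^{*}$ and of $\{\emptyword\}\cup(\Nf_{G}\cap[(A\setminus B)\cdot A^{*}])$; prefix-closure of $\Nf_{H}=\Nf_{G}\cap B^{*}$ holds as an intersection of the prefix-closed sets $\Nf_{G}$ and $B^{*}$, while prefix-closure of $\Nf_{\Tr}$ follows from its formula, since a nonempty prefix of a nonempty element keeps the same (non-$B$) first letter and remains in the prefix-closed set $\Nf_{G}$. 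I expect the only real obstacle to be the key claim about first letters of transversal normal forms: it is exactly the place where one must combine prefix-closure of $\Nf_{\Tr}$ with the uniqueness of coset representatives to collapse an ostensibly $B$-initial transversal word to $\emptyword$; once this is in hand, both equalities and the regularity and prefix-closure statements are routine.
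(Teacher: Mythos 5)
Your proof has the same overall skeleton as the paper's (basepoint facts, a key claim that transversal normal forms cannot begin inside $H$, then bookkeeping), and everything downstream of your key claim --- both displayed equalities, regularity via Theorem~\ref{thm:regclosure}, and prefix-closure of $\Nf_{H}$ and $\Nf_{\Tr}$ read off from the formulas --- is correct. The problem is the key claim itself: you prove it by invoking prefix-closure of $\Nf_{\Tr}$, but that is not a hypothesis of the lemma; it is part of the conclusion (``both $\Nf_{H}$ and $\Nf_{\Tr}$ are \emph{also} regular prefix-closed languages''). The only prefix-closure hypothesis is on $\Nf_{G}$. This matters in how the lemma is used: in the proof of Theorem~\ref{thm:GoGautostack} it is applied to a decomposition $\Nf_{G_{v},e}=\Nf_{\homom_e(G_e)}\Nf_{\Tr,e}$ about which nothing is assumed beyond the lemma's hypotheses, precisely in order to conclude that $\Nf_{\Tr,e}$ is regular and prefix-closed so that Proposition~\ref{prop:gognf} can be invoked. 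Your appeal to the sentence in the introduction (that the tree conditions are equivalent to the existence of \emph{some} decomposition with prefix-closed factors) does not close the gap: the lemma is handed an arbitrary decomposition of the stated type, and identifying it with the definitional one would require knowing that such a decomposition is unique --- which is exactly what the displayed formulas assert, so the argument is circular.

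The gap is local and fixable by the paper's device, which uses only prefix-closure of $\Nf_{G}$: given $w\in\Nf_{\Tr}$ and a prefix $y$ of $w$ representing an element of $H$ (in your setting $y=b$, the alleged first letter in $B$), let $y'\in\Nf_{H}$ be the normal form of the inverse of $y$. Then $y'w\in\Nf_{H}\Nf_{\Tr}=\Nf_{G}$, so prefix-closure of $\Nf_{G}$ puts the prefix $y'y$ in $\Nf_{G}$; since $y'y=_{G}\groupid$ and $\emptyword\in\Nf_{G}$, uniqueness of normal forms in $\Nf_{G}$ forces $y'y=\emptyword$, hence $y=\emptyword$, a contradiction. (The paper runs this with $y$ the \emph{maximal} prefix of $w$ representing an element of $H$, obtaining the stronger statement that no word in $\Nf_{\Tr}$ has any nonempty prefix representing an element of $H$; the first-letter version suffices for your bookkeeping.) Substituting this argument for your claim, and leaving the rest of your write-up unchanged, yields a complete proof.
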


\begin{proof}
Note that since $\Nf_G$ is prefix-closed,
the empty word $\emptyword$ is an element of $\Nf_G$,
and so we also have $\emptyword \in \Nf_H$
and $\emptyword \in \Nf_{\Tr}$.

Let $w$ be any word in $\Nf_{\Tr}$ and write
$w=yz$ where $y$ is the maximal prefix of $w$
representing an element of $H$.
Let $y'\in \Nf_{H} \subset B^{*}$ be
the normal form for the inverse of $y$.
Then the word $y'w=y'yz$ is in $\Nf_{G}$,
and so by prefix-closure of this set, $y'y \in \Nf_{G}$
as well.  Since $y'y =_{G} \groupid$, and
normal forms are unique, then $y=y'=\emptyword$.
Hence no word in $\Nf_{\Tr}$ has a nonempty
prefix representing an element in $H$.
The rest of the result follows from the closure
properties of regular languages.
\end{proof}

Next we establish some notation used throughout the
rest of this section.

Let $\gog=(\Lambda, \{G_v\}, \{G_e\}, \{\homom_e\})$ be a graph of
groups on a finite connected graph $\Lambda$ with
vertex set $V$,
basepoint $v_0 \in V$,
directed edge set $\velam$, and
spanning tree $\tlam$.
Let  $\velnot \subseteq \velam$ denote the subset
of $\velam$ of directed edges whose underlying
undirected edges do not lie in $\tlam$.
For each $v \in V$ let $A_v$ be a finite inverse-closed
generating set for $G_{v}$.
Let $\widetilde A:= (\cup_{v \in V} A_v)  \cup \velam$
and $A := (\cup_{v \in V} A_v) \cup \velnot$.

The fundamental group $\pi_1(\gog)$ of this graph of groups is a
quotient of the free product $(*_{v \in V} G_v) * F(\velnot)$, where
$F(\velnot)$ is the free group on the set $\velnot$, satisfying
$\overline{e}=e^{-1}$ for each $e \in \velnot$; thus, the set $A$ is
a finite inverse-closed generating set for $\pi_1(\gog)$. With each
of the elements of $\velam \setminus \velnot$ as representatives of
the identity of this group, the set $\widetilde A$ also is a finite
inverse-closed generating set for $\pi_1(\gog)$.

Define two functions $\vst,\vend: \widetilde A^* \ra V$
as follows.
Let $\vst(\emptyword)=\vend(\emptyword):=v_0$.
For each letter $a \in \velam$, define $\vst(a)$ to be
the initial vertex of $a$ and $\vend(a)$ to be the terminal
vertex of $a$, and
for each vertex $u \in V$
and letter $a \in A_u$, define $\vst(a)=\vend(a):=u$.
Finally, for an arbitrary nonempty word $w \in A^*$, define
$\vst(w)$ to be $\vst(a)$ where $a$ is the first letter of $w$,
and define $\vend(w)$ to be $\vend(b)$ where $b$ is the last
letter of $w$.

Let $\edgeonly:\widetilde A^* \ra \velam^*$ be the monoid
homomorphism determined by $\edgeonly(a):=a$ for all $a \in \velam$
and $\edgeonly(a):=\emptyword$ for all $e \in \cup_{v \in V} A_v$.
Given any two vertices $u,v \in V$, let $\pt(u,v)$ denote
the word in $(\velam \setminus \velnot)^*$ that is the unique
path without backtracking in the tree $\tlam$ from $u$ to $v$.
Note that if $u=v$ then $\pt(u,v)=\emptyword$.

We construct an ``inflation'' function
$\infl: A^* \ra \widetilde A^*$ by defining $\infl$ on any
word $w=a_1 \cdots a_k$, with each $a_i \in A$, as
$$
\infl(w)  :=  \pt(v_0,\vst(a_1))a_1\pt(\vend(a_1),\vst(a_2))a_2
\cdots \pt(\vend(a_{n-1}),\vst(a_n))a_n.
$$
Note that for any word $w$ over $A$, the word
$\edgeonly(\infl(w))$ is a directed edge path in $\Lambda$;
moreover, it is the shortest directed
edge path in $\Lambda$ starting at
the basepoint $v_0$ and ending at the vertex
$\vend(w)$ that traverses the vertices
$\vend(a_i)$ of the letters $a_i$ that lie in $\cup_{v \in V} A_v$
and the edges corresponding to the letters $a_i$
lying in $\velnot$, in the order that they
appear in $w$, and otherwise remains in the tree $\tlam$.

Define a ``deflation'' (monoid) homomorphism
$\defl:\widetilde A^* \ra A^*$
by $\defl(a):=a$ for all $a \in A$
and $\defl(e):=\emptyword$ for all $e \in \velam \setminus \velnot$.
Note that for any word $w \in A^*$ we have
$\defl(\infl(w))=w$.

Finally, define a ``pruning'' function
$\trim:\widetilde A^* \ra \widetilde A^*$ by
$\trim(w) :=$ the maximal prefix of $w$ whose
last letter lies in $A$.
Since $e$ represents the
identity of $\pi_1(\gog)$ for all 
$e \in \velam \setminus \velnot$,
all three maps $\infl$, $\defl$, and $\trim$ preserve
the group element represented by the word. 

Each element $g$ of the fundamental group $\pi_1(\gog)$
is represented by a loop at $v_0$
in the graph $\Lambda$ together with
vertex group elements at each vertex group traversed
by the loop; that is, 
$g =_{\pi_1(\gog)} g_0 e_1 g_1 \cdots e_k g_k$ where
$e_1 \cdots e_k$ is a loop in $\Lambda$ based
at $v_0$, $g_0 \in G_{v_0}$, and 
$g_i \in \vend(e_i)$ for each $i$.
This format for elements is realized
by the inflation function;
given any word over $A$ representing an element of 
$\pi_1(\gog)$, the inflation function 
inserts letters that are
edges in the tree in order to produce a word 
representing a path starting at the basepoint
in the graph $\Lambda$, interspersed
with letters from vertex groups at vertices along
that path; however the inflation function
does not add a final path through the tree 
to make the path end at the basepoint.
The deflation map reverses this
process, removing all letters corresponding
to edges in the tree.

With this notation, we use a construction
of normal forms for fundamental groups given
by Higgins in~\cite{HigginsNF} to
obtain a
set of normal forms for elements of $\pi_1(\gog)$.

\begin{prop}\label{prop:gognf}
Let $\gog=(\Lambda, \{G_v\}, \{G_e\}, \{\homom_e\})$ be a graph of
groups on a finite connected graph $\Lambda$ with
vertex set $V$,
basepoint $v_0 \in V$,
directed edge set $\velam$,
spanning tree $\tlam$,
and subset $\velnot \subseteq \velam$ consisting of edges
not lying in $\tlam$.
For each $v \in V$ let $A_v$ be a finite inverse-closed
generating set for $G_{v}$.
Let $\widetilde A:= (\cup_{v \in V} A_v)  \cup \velam$
and $A := (\cup_{v \in V} A_v) \cup \velnot$, and
let $\defl\colon \widetilde A^* \ra A^*$ be the deflation map.
Suppose that $\Nf_0$ is a regular prefix-closed
set of normal forms for $G_{v_0}$ over $A_{v_0}$, and for each $e
\in \velam$ suppose that $\Nf_{\Tr,{e}}$ is a regular prefix-closed
set of normal forms for a right transversal over $A_{t(e)}$ of
$\homom_e(G_e)$ in $G_{t(e)}$. Define
\begin{align*}
\widetilde{\Nf}:=\{w_0e_1t_1e_2t_2\cdots e_kt_k \mid & k \ge 0,
  \vst(e_1)=v_0, w_0\in \Nf_0, \forall~i,~e_i \in \velam, \\
& \vend(e_i)=\vst(e_{i+1}),
   t_i\in \Nf_{\Tr,{e_i}}, \text{ and }
   t_i \neq \emptyword \text{ if } e_{i+1}=\overline{e_i}\\
& \text{and either } t_k \neq \emptyword,~e_k \in \velnot, \text{ or } k=0\},
\end{align*}
and let
$\Nf:=\defl\left(\widetilde{\Nf}\right)$.
Then $\widetilde \Nf$ is a regular set of
normal forms over $\widetilde A$,
and $\Nf$ is a regular prefix-closed
set of normal forms over $A$,
for the fundamental group $\pi_1(\gog)$.
\end{prop}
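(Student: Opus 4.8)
The plan is to prove the two assertions about $\widetilde{\Nf}$ separately---that it is a set of normal forms and that it is regular---and then transfer both properties across the deflation map to $\Nf=\defl(\widetilde{\Nf})$. First I would show that the natural map $\widetilde{\Nf}\to\pi_1(\gog)$ is a bijection by invoking Higgins' normal form theorem~\cite{HigginsNF} with the fixed data: the transversals $\Nf_{\Tr,e}$ (in which $\emptyword$ represents the trivial coset) and the normal forms $\Nf_0$ for $G_{v_0}$. The bulk of this step is bookkeeping, matching the combinatorial clauses defining $\widetilde{\Nf}$ to Higgins' reducedness conditions: the requirement $t_i\in\Nf_{\Tr,e_i}$ records the right coset representative at $t(e_i)$; the clause ``$t_i\neq\emptyword$ whenever $e_{i+1}=\overline{e_i}$'' is exactly the no-pinch (Britton) condition forbidding a trivial coset sandwiched between an edge and its reverse; and the terminal clause ``$t_k\neq\emptyword$, $e_k\in\velnot$, or $k=0$'' removes the last ambiguity, a trailing tree edge carrying a trivial coset. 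Since each $e\in\velam\setminus\velnot$ represents $\groupid$ in $\pi_1(\gog)$, such a trailing block does not change the represented element, so forbidding it is what yields one representative per element. The delicate point is the passage from the groupoid (whose morphisms may end at any vertex) to the group: a word of $\widetilde{\Nf}$ ending at $\vend(e_k)$ represents the loop obtained by appending the (trivial) tree geodesic $\pt(\vend(e_k),v_0)$, and I would verify that the terminal clause makes this correspondence bijective.

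For regularity of $\widetilde{\Nf}$ I would build a finite state automaton directly. Because $\Lambda$ is finite and each of $\Nf_0$ and the finitely many $\Nf_{\Tr,e}$ is regular, one may scan a word block by block, tracking in the (finite) state the current vertex of $\Lambda$, which sublanguage is being read, the identity of the last edge (to test $e_{i+1}=\overline{e_i}$ together with a flag recording whether the current $t_i$ is still empty), and a flag for the terminal clause. All the gluing constraints---path consistency $\vend(e_i)=\vst(e_{i+1})$, membership of blocks in the prescribed regular languages, reducedness, and the terminal condition---are testable with bounded memory, so Theorem~\ref{thm:regclosure} assembles $\widetilde{\Nf}$ from finitely many regular pieces.

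Passing to $\Nf=\defl(\widetilde{\Nf})$, regularity is immediate, since $\defl$ is a monoid homomorphism and regular languages are closed under homomorphic image (Theorem~\ref{thm:regclosure}(1)). To see that $\Nf$ is again a set of normal forms it suffices, as $\defl$ preserves the represented group element, to show that $\defl$ is injective on $\widetilde{\Nf}$; I would prove this by establishing $\infl\circ\defl=\mathrm{id}$ on $\widetilde{\Nf}$ (the reverse composition $\defl\circ\infl=\mathrm{id}$ being already recorded). The crux is that in any $w\in\widetilde{\Nf}$ the tree edges occurring between two consecutive letters that survive $\defl$ (vertex-group letters and $\velnot$-edges) cannot backtrack: a backtracking pair $e\,\overline{e}$ of tree edges with no surviving letter between them would force an empty intervening transversal word, which the no-pinch condition forbids. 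Hence each maximal run of erased tree edges is non-backtracking in $\tlam$, hence the unique tree geodesic between its endpoints, which is exactly what $\infl$ reinserts.

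Finally, for prefix-closedness of $\Nf$, given $v=\defl(w)$ with $w\in\widetilde{\Nf}$ and a prefix $v'$ of $v$, I would exhibit a truncation $\tilde w'$ of $w$---stopping at the block containing the cut and, if the cut lands inside a transversal word $t_i$, replacing $t_i$ by its corresponding prefix---with $\tilde w'\in\widetilde{\Nf}$ and $\defl(\tilde w')=v'$. Prefix-closedness of $\Nf_0$ and of each $\Nf_{\Tr,e}$ guarantees the shortened final block is still a legitimate normal form, and the three alternatives of the terminal clause are precisely what certify $\tilde w'\in\widetilde{\Nf}$ in the three places a prefix can end (within $w_0$, just after a $\velnot$-edge, or within a transversal word). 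I expect the main obstacle to be the normal form step: correctly reconciling Higgins' groupoid-level statement with group elements represented by words ending at arbitrary vertices, and confirming that the reduced and terminal conditions cut out exactly one representative per element---this same fact underlies the injectivity of $\defl$ used above.
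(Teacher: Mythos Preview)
Your proposal is correct and follows essentially the same route as the paper: invoke Higgins for the normal form property of $\widetilde{\Nf}$ (the paper makes the groupoid-to-group passage explicit by introducing the loop-based set $\widehat{\Nf}$ and the mutually inverse maps $w\mapsto w\,\pt(\vend(w),v_0)$ and $\trim$), express $\widetilde{\Nf}$ via regular operations, push regularity through the homomorphism $\defl$, and use the inflation map on prefixes to get prefix-closure of $\Nf$.

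The one substantive difference is your injectivity argument for $\defl|_{\widetilde{\Nf}}$. You prove $\infl\circ\defl=\mathrm{id}$ on $\widetilde{\Nf}$ by arguing combinatorially that each maximal run of erased tree edges is non-backtracking (via the no-pinch clause) and hence equals the tree geodesic that $\infl$ reinserts. The paper instead gives a two-line proof: since $\defl$ preserves the represented element of $\pi_1(\gog)$ and $\widetilde{\Nf}$ has already been shown to contain a unique representative of each element, $\defl(w)=\defl(w')$ forces $w=_{\pi_1(\gog)} w'$ and hence $w=w'$. Your argument is more direct and avoids re-using the Higgins step, while the paper's is shorter once that step is in hand; either is fine, and your combinatorial observation about non-backtracking runs is in any case exactly what the paper uses (implicitly) when checking that $\infl$ lands back in $\widetilde{\Nf}$ for the prefix-closure step.
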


\begin{proof}
In~\cite[Corollary~1]{HigginsNF}, Higgins proved that the set
\begin{align*}
\widehat{\Nf}:=\{w_0e_1t_1e_2t_2\cdots e_\ell t_\ell \mid & \ell \ge 0,
  e_i \in \velam, \vst(e_1)=\vend(e_\ell)=v_0, \vend(e_i)=\vst(e_{i+1}),\\
& w_0\in \Nf_0, t_i\in \Nf_{\Tr,{e_i}},
 t_i \neq \emptyword \text{ if } e_{i+1}=\overline{e_i}\}
\end{align*}
is a set of normal forms
for the fundamental group $\pi_1(\gog)$.
Note that for each word $w \in \widetilde\Nf$, the
last letter of $w$ cannot lie in $\velam \setminus \velnot$.
Hence the concatenated word $w\pt(\vend(w),v_0)$ lies in $\widehat\Nf$,
and moreover, for each word $x \in \widehat\Nf$, the pruned
word $\trim(x)$ lies in $\widetilde\Nf$.
Thus the maps $\widetilde\Nf \ra \widehat\Nf$,
defined by $w \mapsto w\pt(\vend(w),v_0)$,
and $\widehat\Nf \ra \widetilde\Nf$,
defined by $x \mapsto \trim(x)$, are inverses.
These maps preserve the group element being represented,
and so $\widetilde\Nf$ is also a set of
normal forms over $\widetilde A$ for $\pi_1(\gog)$.
Thus to prove that $\Nf$ is also a set of normal
forms for $\pi_1(\gog)$, it suffices to
prove that the restriction of the deflation map
$\defl$ to $\widetilde{\Nf}$ is a bijection.

By definition, $\Nf=\defl(\widetilde \Nf)$, and thus $\defl\colon
\widetilde \Nf\ra \Nf$ is surjective. Thus, we need only show that
it is injective.
Suppose that $w, w'\in\widetilde \Nf$ and $\defl(w)=\defl(w')$. As
deflation only eliminates edges in $\tlam$, we have that
$w=_{\pi_1(\gog)}\defl(w)$ and $w'=_{\pi_1(\gog)}\defl(w')$. Now,
this implies that $w=_{\pi_1(\gog)}w'$; however, since $\widetilde
\Nf$ is a set of normal forms for $\pi_1(\gog)$, we must have that
$w=w'$. Thus, $\defl$ is injective when restricted to $\widetilde
\Nf$.  We also note that the restriction of the inflation map
to $\Nf$ gives
$\infl:\Nf \ra \widetilde{\Nf}$, which is the inverse of $\defl$.
Thus $\Nf$ is also a set of normal forms for
$\pi_1(\gog)$.

Since $\defl\colon \widetilde A^* \ra A^*$ is a monoid homomorphism
and the image of a regular language is regular
(Theorem~\ref{thm:regclosure}), in order to show that
$\Nf$ is regular it suffices to show
that $\widetilde{\Nf}$ is regular.
For all $w\in\widetilde{\Nf}$,
the word $w$ is a concatenation $w=w_0w'$, where $w_0\in\Nf_0$
and the suffix $w'$ alternates between edges and
transversal normal forms from
the transversals corresponding to the edges; that is,
$w \in \Nf_0\cdot(\cup_{e\in \velam}~e\Nf_{\Tr,e})^*$.
On the other hand, a word
$w \in \Nf_0\cdot(\cup_{e\in \velam}~e\Nf_{\Tr,e})^*$
will fail to be in the language $\widetilde \Nf$
if and only if either $w$ contains an edge and
its reverse consecutively, the terminal vertex of one edge is not
the initial vertex of the next, the first edge in $w'$
does not have initial vertex $v_0$, or the
last letter of $w$ lies in $\velam \setminus \velnot$.
That is,
\begin{align*}
\widetilde{\Nf}=\Nf_0\cdot\left(\cup_{e\in
\velam}e\Nf_{\Tr,e}\right)^* \setminus & \Big[ \left(\cup_{e\in
\velam} \widetilde{A}^*e\overline{e}\widetilde{A}^*\right)\cup
\left(\cup_{\vend(e)\neq \vst(e')}\widetilde{A}^*e\Nf_{\Tr,e}e'\widetilde{A}^*\right) \\
&\cup\left(\cup_{\vst(e)\neq v_0}\Nf_0e\widetilde{A}^*\right) \cup
\left(\widetilde{A}^*(\velam \setminus \velnot\right) \Big].
\end{align*}
Hence $\widetilde \Nf$ is formed from regular languages by concatenation,
union, Kleene star, and complementation. Thus, by Theorem~\ref{thm:regclosure},
$\widetilde \Nf$ is regular, and so $\Nf$ is also regular.

Finally, suppose that $w'$ is a
prefix of a word $w \in \Nf$.
Then $w=w'w''$ for some $w'' \in A^*$, and so
the ``lift'' $\infl(w)$
of $w$ to $\widetilde{\Nf}$ satisfies
$\infl(w)=\infl(w')x''$ for some word $x'' \in \widetilde A^*$.
Now prefix-closure of the sets $\Nf_0$ and of $\Nf_{\Tr,e}$
for all $e\in \velam$ implies that the
word $\infl(w')$ lies in the set
$$
\Nf_0\cdot\left(\cup_{e\in \velam}e\Nf_{\Tr,e}\right)^*
\setminus
\left[
\left(\cup_{e\in \velam} \widetilde{A}^*e\overline{e}\widetilde{A}^*\right)\cup
\left(\cup_{\vend(e)\neq \vst(e')}\widetilde{A}^*e\Nf_{\Tr,e}e'\widetilde{A}^*\right)
\cup\left(\cup_{\vst(e)\neq v_0}\Nf_0e\widetilde{A}^*\right)
\right].
$$
Moreover, the last letter of $\infl(w')$ is the last
letter of $w'$, since the inflation procedure
does not insert any letters at the end of a word over $A$.
Hence $\infl(w')$ lies in $\widetilde\Nf$, and so
$w'=\defl(\infl(w'))$ lies in $\Nf$.
Therefore the set $\Nf$ is also prefix-closed.
\end{proof}

In the following lemma we illustrate the effect
on normal forms of multiplication by a word in a
vertex group.
This information will be integral to our proof in 
Theorem~\ref{thm:GoGautostack}
that the function $\Phi$ satisfies the property (F3), showing
that iteration of $\Phi$ on any path eventually stabilizes
at a path in the tree.

\begin{lem}\label{lem:computenf}
Let $\gog$, $\Lambda$, $v$, $v_0$, $\velam$, $\tlam$, $\velnot$,
$A_v$, $G_v$, $\widetilde A$, $\Nf_0$, $\Nf_{\Tr,e}$, and $\widetilde \Nf$
be as in the statement of Proposition~\ref{prop:gognf}.
Let
$\tilde y=w_0e_1t_1e_2t_2\cdots e_kt_k$ be an element
of $\widetilde\Nf$ with $k \ge 0$,
$\vst(e_1)=v_0$, $w_0\in \Nf_0$, and each $e_i \in \velam$,
$\vend(e_i)=\vst(e_{i+1})$, and
$t_i\in \Nf_{\Tr,{e_i}}$.
Moreover, let $w \in A_u^*$ for some $u \in V$
and write $\pt(\vend(\tilde{y}),\vst(w))=f_1 \cdots f_m$
with $m \ge 0$ and each $f_j \in \velam \setminus \velnot$.
Then the normal form of $\tilde{y}w$ in $\widetilde\Nf$ is
$$
\mathsf{nf}_{\widetilde\Nf}(\tilde{y}w)=\trim(w_0'
    e_1x_1e_2 \cdots e_kx_k
    f_1 w_1 f_2 \cdots f_m w_m)
$$
for some words $w_0' \in \Nf_0$,
$x_i \in \Nf_{\Tr,e_i}$, and $w_j \in \Nf_{\Tr,f_j}$.
\end{lem}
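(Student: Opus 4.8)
The plan is to rewrite $\tilde y w$ into the asserted shape by the standard right-to-left normal-form collapse and then appeal to uniqueness of normal forms in $\widetilde\Nf$. First I would record that since every $f_j$ lies in $\velam\setminus\velnot$ and hence represents $\groupid$ in $\pi_1(\gog)$, we have $\tilde y w =_{\pi_1(\gog)} \tilde y f_1\cdots f_m w = w_0e_1t_1\cdots e_kt_k\,f_1\cdots f_m\,w$, where the right-hand side now traces an actual edge path $e_1,\dots,e_k,f_1,\dots,f_m$ in $\Lambda$ from $v_0$ to $u=\vst(w)$ (this uses $\vst(f_1)=\vend(\tilde y)=t(e_k)$), carrying a single vertex-group syllable $w$ at the terminal vertex $u=t(f_m)$.

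Next I would run the collapse from the right, using that every element of $G_{t(e)}$ is uniquely $\homom_e(h)\,t$ with $t$ represented by a word in $\Nf_{\Tr,e}$. Write the final syllable as $\homom_{f_m}(\cdot)\,w_m$ with $w_m\in\Nf_{\Tr,f_m}$, and move the $\homom_{f_m}(\cdot)$-factor leftward across $f_m$ via the relation $e\,\homom_e(g)=\homom_{\overline e}(g)\,e$, which holds for every $e\in\velam$ (for $e\in\velnot$ from $\overline e=e^{-1}$ and $e\homom_e(g)e^{-1}=\homom_{\overline e}(g)$, and for $e\in\velam\setminus\velnot$ from $\homom_e(g)=\homom_{\overline e}(g)$ together with $e=_{\pi_1(\gog)}\groupid$). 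Iterating across $f_m,\dots,f_1$ produces the syllables $w_m,\dots,w_1$ and deposits an element of $G_{t(e_k)}$ into the slot of $t_k$; continuing across $e_k,\dots,e_1$ produces $x_k,\dots,x_1\in\Nf_{\Tr,e_i}$ and finally an element of $G_{v_0}$ which, combined with $w_0$ and replaced by its $\Nf_0$-normal form, gives $w_0'$. This is a finite process preserving the represented group element, so $\tilde y w =_{\pi_1(\gog)} z$, where $z:=w_0'e_1x_1\cdots e_kx_k\,f_1w_1\cdots f_mw_m$.

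Finally I would verify that $\trim(z)$ satisfies all defining conditions of $\widetilde\Nf$; uniqueness of normal forms (Proposition~\ref{prop:gognf}) together with the fact that $\trim$ preserves the represented element then identifies $\trim(z)$ with the normal form of $\tilde y w$. The vertex-compatibility conditions and $\vst(e_1)=v_0$ are inherited, and the only possible dangling tree edges occur at the end of $z$ (where an $f_j$ may be followed by an empty $w_j$), which is exactly what $\trim$ removes. I expect the \emph{main obstacle, and the real content of the lemma, to be the no-backtracking conditions}: one must show $x_i\neq\emptyword$ whenever $e_{i+1}=\overline{e_i}$, and $x_k\neq\emptyword$ when $f_1=\overline{e_k}$, so that no edge pair cancels and the entire edge sequence survives. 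The decisive observation is that the element deposited into the slot of $t_i$ across $e_{i+1}=\overline{e_i}$ lies in $\homom_{\overline{e_{i+1}}}(G_{e_{i+1}})=\homom_{e_i}(G_{e_i})$, and for $S=\homom_{e_i}(G_{e_i})$ and $s\in S$ one has $t_is\in S\iff t_i\in S$; since $\tilde y\in\widetilde\Nf$ forces $t_i\notin\homom_{e_i}(G_{e_i})$ exactly in the backtracking case (by the ``$t_i\neq\emptyword$ if $e_{i+1}=\overline{e_i}$'' clause, and by the last-letter clause when $f_1=\overline{e_k}$, where $e_k$ is then a tree edge so $t_k\neq\emptyword$), the new representative $x_i$ again lies in a nontrivial coset, i.e. $x_i\neq\emptyword$. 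No analogous check is needed among the tree syllables, since $\pt(\vend(\tilde y),u)$ is non-backtracking and hence $f_{j+1}\neq\overline{f_j}$ for all $j$.
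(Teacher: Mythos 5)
Your proposal is correct and follows essentially the same route as the paper's proof: insert the tree path $f_1\cdots f_m$, collapse from the right using the unique factorization of each vertex-group element as an edge-subgroup element times a transversal representative, push the subgroup element across each edge via $\homom_{\overline{e}}\circ\homom_{e}^{-1}$, and verify that the no-backtracking and pruning conditions of $\widetilde\Nf$ survive before invoking uniqueness of normal forms. Your direct handling of the backtracking case (using that $\homom_{\overline{e_{i+1}}}(G_{e_{i+1}})=\homom_{e_i}(G_{e_i})$ exactly when $e_{i+1}=\overline{e_i}$, so $t_i\neq\emptyword$ forces $x_i\neq\emptyword$) is just the contrapositive of the paper's check that $x_i=\emptyword$ forces $t_i=\emptyword$, and is if anything stated slightly more carefully.
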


\begin{proof}
Note that since $\tilde{y} \in \widetilde\Nf$,
then $\tilde{y}$ is freely reduced
and $\tilde{y}$ does not end with a letter in $\velam \setminus \velnot$.
Hence the word $\tilde{y}\pt(\vend(\tilde{y}),\vst(w))$ is also freely reduced.

We can uniquely factor the element of $G_{\vend(f_{m})}$ represented
by $w$ as $w=_{G_{\vend(f_{m})}} g_{m}w_{m}$ where $g_{m} \in
\homom_{f_{m}}(G_{f_{m}})$ and $w_{m} \in \Nf_{\Tr,f_{m}}$. Let
$\widetilde g_{m}$ denote a representative of $g_{m}$ in
$G_{\vst(f_{m})}$; that is, $\widetilde g_{m} =
\homom_{\overline{f_m}}(\homom_{f_m}^{-1}(g_{m}))$. Repeating this
process, for each $1 \le i \le m-1$, write the element $\widetilde
g_{i+1}$ in $G_{\vend(f_{i})}$ as $\widetilde
g_{i+1}=_{G_{\vend(f_{i})}} g_{i}w_{i}$ where $g_{i} \in
\homom_{f_{i}}(G_{f_{i}})$ and $w_{i} \in \Nf_{\Tr,f_{i}}$, and let
$\widetilde g_{i}$ denote a representative of $g_{i}$ in
$G_{\vst(f_{i})}$. Then $\tilde{y}w =_{\pi_1(\gog)}
\tilde{y}\pt(\vend(y),\vst(w))w =_{\pi_1(\gog)} \tilde{y} \widetilde g_{1}f_1
w_{1} f_2 \cdots f_{m} w_{m}$.

Continuing this process further, let $\widetilde g_{k+1}':=\widetilde
g_{1}$, and for all $1 \le i \le k$, write the element $t_{i}
\widetilde g_{i+1}'$ in $G_{\vend(f_{i})}$ as $t_{i} \widetilde
g_{i+1}'=_{G_{\vend(f_{i})}} g_{i}'x_{i}$ where $g_{i}' \in
\homom_{f_{i}}(G_{f_{i}})$ and $x_{i} \in \Nf_{\Tr,f_{i}}$, and let
$\widetilde g_{i}'$ denote a representative of $g_{i}'$ in
$G_{\vst(f_{i})}$.  Also let $w_0'$ be the normal form in $\Nf_0$ of
the element represented by $w_0\widetilde g_{1}'$. Then
$$\tilde{y}w=_{\pi_1(\gog)} yw
 =_{\pi_1(\gog)} w_0'e_1x_{1}e_2 \cdots e_kx_{k}
 f_1 w_{1} f_2 \cdots f_{m} w_{m}.
$$

If there is an index $i$ such that $x_{i}=\emptyword$, then $t_{i}
=_{\pi_1(\gog)} g_{i}'(\widetilde g_{i+1}')^{-1}$ and since
$g_i',\widetilde g_{i+1}'$ are in $\homom_{f_{i}}(G_{f_{i}})$, then
$t_i$ is in $\homom_{f_{i}}(G_{f_{i}})$ as well. Since $t_i$ is
an element of $\Nf_{\Tr,f_{i}}$, then $t_i=\emptyword$ in this case.
Note that if $x_k=\emptyword$, then $t_k=\emptyword$, and since
$\tilde{y}$ is pruned, this implies that $e_k\in\velnot$. Thus, the
word $\tilde{y}\pt(\vend(\tilde{y}),\vst(w))$ does not contain any
consecutive inverse pair of letters, and the process in this proof
cannot create any subword that is not freely reduced. Thus the word
produced by this process satisfies all of the properties of words in
$\widetilde\Nf$, except for the possibility that it has a suffix of
letters in $(\velam \setminus \velnot)^*$. Therefore
$\mathsf{nf}_{\widetilde\Nf}(\tilde{y}w)=\trim(w_0'
    e_1x_{1}e_2 \cdots e_kx_{k}
    f_1 w_{1} f_2 \cdots f_{m} w_{m}).$
\end{proof}

We are now
ready to show autostackability
of fundamental groups of graphs of autostackable groups
in which each vertex
group has an autostackable structure {\rsp} each
incident edge group.  En route we also
prove this closure property for stackable groups.

\begin{thm}\label{thm:GoGautostack}
Let $\gog$ be a graph of groups
over a finite connected graph $\Lambda$ with at least
one edge.
If for each
directed edge $e$ of $\Lambda$
the vertex group $G_v$ corresponding
to the terminal vertex $v=t(e)$ of $e$
is autostackable [respectively, stackable] {\rsp}
the associated injective homomorphic image
of the edge group $G_e$, then the
fundamental group
$\pi_1(\gog)$ is autostackable [respectively, stackable].
\end{thm}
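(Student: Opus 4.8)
The plan is to take the spanning tree of $\Gamma_A(\pi_1(\gog))$ determined by the normal forms $\Nf=\defl(\widetilde\Nf)$ of Proposition~\ref{prop:gognf} over $A=(\cup_v A_v)\cup\velnot$, and to build a stacking function $\phi$ for $\pi_1(\gog)$ by weaving together the vertex--group stacking functions with ``carry'' moves coming from the edge relations. For each directed edge $e$ I fix, from the hypothesis, an autostackable structure on $G_{\vend(e)}$ respecting $\homom_e(G_e)$, with stacking function $\phi_e$, edge--subgroup generating set $B_e\subseteq A_{\vend(e)}$, and decomposition $\Nf_{\vend(e)}=\Nf_{H_e}\Nf_{\Tr,e}$ (writing $H_e:=\homom_e(G_e)$); at the basepoint I pick such a structure for one edge incident to $v_0$ so that $\Nf_0$ is its normal form set, which is legitimate since a connected graph with an edge has no isolated vertex. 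Throughout I track each normal form via inflation as $\tilde y=w_0e_1s_1\cdots e_ks_k$, whose last block $s_k$ is a transversal element for the subgroup of the incoming edge $e_k$, and transport the construction to $A$ by $\defl$.

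I define $\phi$ on an edge $e_{y,a}$ by cases. If $a\in\velnot$, then either appending $a$ (after the deflated tree path $\pt(\vend(y),\vst(a))$) stays in normal form, or it creates a backtracking pair that free--reduces; in both cases $e_{y,a}$ is a tree edge and $\phi$ acts as the identity. If $a\in A_u$ is a vertex generator, the flow first moves to $u$ along the deflated tree path $\pt(\vend(y),u)$, so that $a$ meets a transversal block $s$ entered by some edge $e$ with $\vend(e)=u$, and then applies the vertex stacking function $\phi_e$ to $(s,a)$ to begin reducing $sa$ inside $G_u$; under iteration this reaches the $G_u$--normal form $h\,s'$ with $h\in\Nf_{H_e}\subseteq B_e^*$ and $s'\in\Nf_{\Tr,e}$. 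Whenever the normal--form prefix ends immediately after an edge $e$ with empty transversal and the next letter is an edge--subgroup generator $b=\homom_e(g)$, I instead \emph{push $b$ across} $e$ using the defining edge relation---replacing $b$ by $e^{-1}\homom_{\overline{e}}(g)e$ when $e\in\velnot$, and re--identifying $\homom_e(g)$ as $\homom_{\overline{e}}(g)$ across a tree edge---which transfers the edge--subgroup part to the block left of $e$, where the vertex stacking function at $\vst(e)$ absorbs it and possibly emits a new carry one block further left. Since $w_0$ is a full $G_{v_0}$ normal form, any carry reaching $v_0$ is absorbed and propagates no further, which is exactly the normal--form change recorded in Lemma~\ref{lem:computenf}.

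Conditions (F1) and (F2) are then immediate, since each flow value is the identity, a single vertex--flow step (bounded by the vertex flow bound), or one push--across word (bounded by the fixed lengths of the images of edge generators), and $\phi$ fixes tree edges by construction. The crux, and the step I expect to be the main obstacle, is condition (F3). Here I would use Lemma~\ref{lem:computenf} to define a well--founded measure, lexicographic in first the position from the right of the block currently carrying a nontrivial edge--subgroup element, and second the vertex--group flow complexity of the reduction taking place in that block. Each flow step either strictly decreases the second coordinate, by (F3) applied to the relevant vertex group, or completes a block's reduction and pushes its edge--subgroup part across an edge, strictly decreasing the first coordinate, because carries move monotonically leftward through the finitely many blocks toward $v_0$ while the blocks to their right remain in final normal form. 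The delicate points are that this measure is genuinely well--founded and that the interleaving of vertex--flow and push--across steps never regenerates work to the right; Lemma~\ref{lem:computenf} supplies precisely the bookkeeping needed to control the propagation.

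For autostackability I would prove that $\Graph{\phi}$ is regular. The language $\Nf$ is regular and prefix--closed by Proposition~\ref{prop:gognf}, and an automaton reading a normal form left to right can track the current vertex and last incoming edge and simulate the regular vertex graphs $\Graph{\phi_e}$ on each block. The $H_e$--translation--invariance hypothesis is exactly what forces the flow of the last block to depend only on the triple (incoming edge, last transversal, generator) rather than on the preceding blocks, so that $\phi$ becomes a local, regular function of $y$; the push--across values depend only on the edge and the edge--subgroup generator and are finite data. Assembling these with the closure properties of Theorem~\ref{thm:regclosure} yields regularity of $\Graph{\phi}$, hence autostackability of $\pi_1(\gog)$. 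The stackable statement is the same construction with the final regularity step omitted; equivalently, one may package the whole flow as a bounded (regular) convergent prefix--rewriting system and appeal to Theorem~\ref{thm:rewritingsystem}.
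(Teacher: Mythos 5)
Your outline coincides with the paper's own proof: the same normal forms and spanning tree from Proposition~\ref{prop:gognf}, the same three-case stacking function (fix edges labelled by $\velnot$, push an edge-subgroup letter across the incoming edge when the terminal block is empty, otherwise run the vertex-group stacking function on the terminal block), the same lexicographic termination scheme, and the same decomposition strategy for regularity via Theorem~\ref{thm:regclosure}. But there is a genuine gap in your (F3) argument, and it sits exactly at the point you flag as ``delicate'' and defer to Lemma~\ref{lem:computenf}.

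Concretely: suppose the terminal block of $y$ is $s:=\suf_{\vst(a)}(y)\in\Nf_{\Tr,f}$ and the flow applies the vertex stacking function, so $\phi(y,a)=\phi_f(s,a)=wa'w'$ and the next edge is $e'=e_{g',a'}$ with $g'=_G yw$. You claim your second coordinate (``the vertex-group flow complexity of the reduction taking place in that block'') decreases ``by (F3) applied to the relevant vertex group.'' However, by the unique factorization over $\homom_f(G_f)\cdot\Nf_{\Tr,f}$ recorded in Lemma~\ref{lem:computenf}, the terminal block of $\nf{g'}$ is not $sw$ but $h_0^{-1}sw$ for some $h_0\in\homom_f(G_f)$: the subgroup part of $sw$ has already been carried across $f$. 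Hence the vertex-graph edge $e_{h_0^{-1}sw,a'}$ determining $\counter_2(e')$ is in general \emph{not} on the path $\Phi_f(e_{s,a})$; it is only a $\homom_f(G_f)$-translate of the edge $e_{sw,a'}$ that is. So (F3) for $\Phi_f$ says nothing directly about it, and its naive descending chain length need not be smaller; Lemma~\ref{lem:computenf} alone cannot repair this. The paper's fix is to replace $\dcl_f$ by the \emph{invariant} descending chain length $\idcl_f(e):=\min\{\dcl_f(he)\mid h\in\homom_f(G_f)\}$ and to invoke the $H$-translation invariance clause of the ``respecting'' hypothesis: choosing $h$ realizing $\idcl_f(e_{s,a})=\dcl_f(e_{hs,a})$, invariance gives $\lbl(\Phi_f(e_{hs,a}))=wa'w'$, so $e_{hsw,a'}$ lies on $\Phi_f(e_{hs,a})$, and since $hsw$ is a subgroup-translate of the new terminal block, $\idcl_f(e')\le\dcl_f(e_{hsw,a'})<\dcl_f(e_{hs,a})=\idcl_f(e_{s,a})$. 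Relatedly, you place translation invariance in the regularity step, where it is not needed: your $\phi$, like the paper's, is \emph{defined} to depend only on the incoming edge, the terminal block, and the generator, so locality is automatic; the place where that hypothesis does real work is precisely the termination argument above. (A small wording point: your first coordinate should be the distance from $v_0$, namely $\ell(\edgeonly(\infl(\nf{g}a)))$, not the ``position from the right''; a leftward carry decreases the former and increases the latter.)
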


\begin{proof}
Let $\gog=(\Lambda, \{G_v\}, \{G_e\}, \{\homom_e\})$ be a graph of
groups with vertex set $V$,
basepoint $v_0 \in V$,
directed edge set $\velam$,
spanning tree $\tlam$, and subset $\velnot \subseteq \velam$ of edges
not lying in $\tlam$,
of the finite connected graph $\Lambda$.

Since $\Lambda$ is connected and has at
least one edge, every vertex of $\Lambda$
is the terminus of an edge in $\Lambda$,
including $v_0$, and so every vertex group
is autostackable.
Suppose that $G_{v_0}$ has an autostackable
structure over an alphabet $A_{0}$ and normal
form set $\Nf_0$.  Also for each
$e\in \velam$, and $v\in V$ with $t(e)=v$,
suppose that $G_v$ is autostackable {\rsp} $\homom_e(G_e)$
over a finite inverse-closed generating set
$A_{v,e}$, with normal form set of the form
$\Nf_{G_{v},e}=\Nf_{\homom_e(G_e)}\Nf_{\Tr,e}$
where $\Nf_{\homom_e(G_e)}$ is a set of normal
forms for $\homom_e(G_e)$ over a finite
inverse-closed generating set $B_e$ of $\homom_e(G_e)$
contained
in $A_{v,e}$, and $\Nf_{\Tr,e}$ is a set
of normal forms over $A_{v,e}$ for a right transversal
of $\homom_e(G_e)$ in $G_v$.

For each $v \in V \setminus \{v_0\}$, let
$A_v := \cup_{e \in \velam, t(e)=v} A_{v,e}$, and let
$A_{v_0} := A_0 \cup (\cup_{e \in \velam, t(e)=v_0} A_{v_0,e})$.
By Lemma~\ref{lem:extendgen}, the group
$G_{v_0}$ is also autostackable over $A_{v_0}$
with normal form set $\Nf_0$, and for
each vertex $v$ and edge $e$ with $t(e)=v$ the group
$G_{v}$ is autostackable {\rsp} $\homom_e(G_e)$
over $A_v$ with the same normal form set
$\Nf_{G_{v},e}$.  Let $\Phi_0$ be the associated
flow function for $G_0$, with bound $K_0$,
and let $\Phi_e$ denote the flow function for
the autostackable structure on $G_v$
{\rsp} $\homom_e(G_e)$, with bound $K_e$.

Let $A := (\cup_{v \in V} A_{v}) \bigcup \velnot$.
Let $\Gamma$ be the Cayley graph of $\pi_1(\gog)$ over $A$ and as
usual let $\vec E, \vec P$ be the sets of directed edges and paths
in $\Gamma$.
Having satisfied all of the hypotheses of Proposition~\ref{prop:gognf},
let $\Nf$ be the regular prefix-closed set of normal
forms for $\pi_1(\gog)$ over $A$ from that Proposition. 

For every $g \in \pi_1(\gog)$, let $\nf{g} \in \Nf$
denote the normal form of $g$.  Let $T$ be the spanning
tree in $\Gamma$ determined by the normal form set $\Nf$.

\medskip

\noindent\textbf{The flow function and stackability:}

Next we define a (stacking) function $\phi:\Nf \times A \ra A^*$
and show that the associated function
$\Phi:\vec E \ra \vec P$ defined by $\Phi(e_{g,a}) :=$
the path in $\Gamma$ starting at $g$ labeled by $\phi(\nf{g},a)$
satisfies the properties of a bounded flow function.

Let $\phi_0:\Nf_0 \times A_{v_0} \ra A_{v_0}^*$ be the stacking
function associated to $\Phi_0$, and let
$\phi_e:\Nf_{G_{t(e),e}} \times A_{t(e)} \ra A_{t(e)}^*$
be the stacking functions associated to the
flow functions $\Phi_e$ for each
$e \in \velam$, respectively.

Before giving the full details of the 
definition of the stacking function $\phi$, 
we include an informal description.
Given a normal form word $y \in \Nf$ and a letter $a \in A$,
there are three possible options.
The first is that $a \in \velnot$, in which case 
we define $\phi(y,a)=a$ so that the flow
function fixes the edge $e_{y,a}$.
Otherwise $a \in A_v$ for some vertex $v$ of $\Lambda$.
Let $p$ be the path in $\Lambda$ associated to
the inflation of the 
word $ya$.
The second option is that the path $p$
ends with an edge $e$ such that  
$a$ is a generator of the edge group $h(G_e)$
and $y$ does not end with a letter in $A_v$;
then we use relation (2) or (3) of Definition~\ref{def:gog}
(depending on whether or not $e$ is in $\tlam$)
to define $\phi(y,a)$ so that 
the path in $\Lambda$ associated to the
inflation of the normal form for $y\phi(y,a)$
is shorter.  Finally, in the third
option (that is, in any other case)
 we apply the stacking function
$\phi_e$ (or $\phi_0$ if $p$ is the empty path).
To make this more precise, we again define some notation.

For any word $w \in A^*$ and letter $a \in \cup_{v \in V}A_v$,
the word $\edgeonly(\infl(wa))$
is a directed edge path in $\Lambda$ from $v_0$
via $\vend(w)$, to $\vst(a)$, and in particular this path
satisfies
$\edgeonly(\infl(wa))=\edgeonly(\infl(w))\pt(\vend(w),\vst(a))$.
Let $\epair(w,a):=0$ if $\edgeonly(\infl(wa))=\emptyword$
(equivalently, if $w \in A_{v_0}^*$ and $a \in A_{v_0}$),
and let $\epair(w,a)$ be the last
letter of $\edgeonly(\infl(wa))$ otherwise.
That is, $\epair(w,a)$
is the last edge encountered on the path $\edgeonly(\infl(wa))$.
For each word $w$ in $A^*$ (or $\widetilde A^*$) and vertex
$u \in V$, let $\suf_u(w)$ denote the maximal suffix
of $w$ contained in $A_u^*$.

For any edge $e \in \velam$
and letter $a \in B_e$, let $\widehat{a}_e$ denote
the normal form over $B_{\overline{e}}$
in the autostackable structure of
${G_{\vst(e)}}$ {\rsp} $\homom_{\overline{e}}(G_{\overline{e}})$
of the element ${\homom_{\overline{e}}(\homom_{e}^{-1}(a))}$;
that is, if $e \in \velam \setminus \velnot$ then
$a=_{G} \widehat{a}_{e} $ and
if $e \in \velnot$ then $a=_{G} e^{-1}\widehat{a}_{e} e$.


Let $y \in \Nf$ and let $a \in A$.
The function $\phi:\Nf \times A \ra A^*$ evaluated at $(y,a)$
is given by
$$
\phi(y,a):=
\begin{cases}
a             & \text{ if } a \in \velnot \\
\defl(\epair(y,a)^{-1}~\widehat{a}_{\epair(y,a)}~\epair(y,a))
              & \text{ if } \epair(y,a) \in \velam,
                a \in B_{\epair(y,a)}, \\
              & \hspace{.1in} \text{ and } \suf_{\vst(a)}(y)=\emptyword \\
\phi_{\epair(y,a)}(\suf_{\vst(a)}(y),a) & \text{ otherwise.}
\end{cases}
$$
Let $\Phi:\vec E \ra \vec P$ be defined by $\Phi(e_{g,a}) :=$
the path in $\Gamma$ starting at $g$ labeled by $\phi(\nf{g},a)$,
for all $g \in G$ and $a \in A$.

\noindent{\bf Property (F1):} It follows immediately from this
definition that for each directed edge $e \in \vec E$ in the Cayley
graph $\Gamma$, the path $\Phi(e)$ has the same initial and terminal
vertices as $e$. Also the length of the path $\Phi(e)$ is at most
$2+\max(\{\ell(\widehat{a}_e) \mid e \in \velam, a \in B_e\} \cup
   \{K_f \mid f \in \{0\} \cup \velam\})$.
This is a maximum over a finite set, hence (F1) holds for $\Phi$.

\noindent{\bf Property (F2):} To check that $\Phi$ fixes directed edges lying
in the spanning tree $T$ in $\Gamma$, suppose that
$e_{g,a}$ is a directed edge whose underlying undirected
edge lies in $T$, and let $y:=\nf{g}$.
Then either $\nf{ga}=ya$, or
else $y$ ends with the letter $a^{-1}$.

If $a \in \velnot$, then $\Phi(e_{g,a})=e_{g,a}$ from
the definition above, as required.

On the other hand, suppose that $a \in A_u$ for some $u \in \velam$.
Now let $\tilde y = \infl(y)$, so that $y=\defl(\tilde y)$. We can
write $\tilde y=w_0e_1t_1e_2t_2\cdots e_kt_k \in \widetilde\Nf$ such
that $e_1 \cdots e_k$ is an edge path in $\Lambda$ from $v_0$ to
$\vend(y)$,
$w_0 \in \Nf_0$, each $t_i \in \Nf_{\Tr,e_i}$,
no inverse pair $e\overline{e}$ appears as a subword,
and the last letter of $\tilde y$ lies in $A$ (or $\tilde y=\emptyword$).
Now $\infl(ya)=
\infl(y) \pt(\vend(y),\vst(a))a$
and $\edgeonly(\infl(ya))=e_1 \cdots e_k\pt(\vend(y),\vst(a))$.

Suppose further that $ya \in \Nf$; then $\infl(ya) \in \widetilde \Nf$.
If $\epair(y,a)=0$, then $\suf_{\vst(a)}(y)=y \in \Nf_0$, $a \in
A_{v_0}$, and $ya \in \Nf_0$, and so $\lbl(\Phi(e_{g,a}))=\phi(y,a)=
\phi_0(y,a)=\lbl(\Phi_0(e_{y,a}))=a$ since the flow function
$\Phi_0$ fixes edges in the associated spanning tree for $G_{v_0}$.
If instead $\epair(y,a) \neq 0$, then $\epair(y,a) \in \velam$.  In
the case that $\vend(y)=\vst(a)$ (i.e., $a\in A_{\vend(y)}$), we have
$\epair(y,a)=e_k$ and $\suf_{\vst(a)}(y)=t_k$. Applying the
properties of elements of $\widetilde \Nf$ to $\infl(ya)$ shows that
$t_ka \in \Nf_{\Tr,e_k}$; that is,
$\suf_{\vst(a)}(y)a \in \Nf_{\Tr,\epair(y,a)}$. If
$\suf_{\vst(a)}(y)=t_k=\emptyword$, then (recalling that
$\Nf_{\Tr,e_k} = \{\emptyword\} \cup
    (\Nf_{G_{\vend(e_k)},e_k} \cap
    (A_{\vend(e_k)} \setminus B_{e_k}) \cdot A_{\vend(e_k)}^*)$)
we have $a \notin B_{\epair(y,a)}$. Hence
$\lbl(\Phi(e_{g,a}))=\phi_{\epair(y,a)}(\suf_{\vst(a)}(y),a)=a$,
using Property (F2) for the flow function $\Phi_{\epair(y,a)}$. In
the case that $\vend(y) \neq \vst(a)$, the edge $\epair(y,a)$ is the
last edge of $\pt(\vend(y),\vst(a))$ and
$\suf_{\vst(a)}=\emptyword$. Again using the fact that $\infl(ya)
\in \widetilde \Nf$, the edge $\epair(y,a)$ must be followed in
$\infl(ya)$ by a suffix in the normal form set
$\Nf_{\Tr,\epair(y,a)}$; this subword is the single letter $a$, and
so $a \in \Nf_{\Tr,\epair(y,a)}$ and again $a \notin
B_{\epair(y,a)}$. Therefore
$\lbl(\Phi(e_{g,a}))=\phi_{\epair(y,a)}(\suf_{\vst(a)}(y),a)=a$, as
needed.

Suppose instead that $y$ ends with the letter $a^{-1} \in A_{\vst(a)}$.
Then $\vend(y)=\vst(a)$ and
$\suf_{\vst(a)}(y)$ is a nonempty word in $A_{\vst(a)}^*$
ending with the letter $a^{-1}$.  Therefore
$\lbl(\Phi(e_{g,a}))=\phi_{\epair(y,a)}(\suf_{\vst(a)}(y),a)=a$.
This concludes the last case, showing
that $\Phi$ fixes every directed edge lying
in the spanning tree $T$ in $\Gamma$, and so (F2) is satisfied.

\noindent{\bf Property (F3):} Finally we consider sequences
$e_1,e_2,...$ of directed edges $e_i \in \vec E$ in $\Gamma$ that do
not lie in $T$, satisfying the property that $e_{i+1}$ lies in the
path $\Phi(e_i)$ for each $i$.  To show that no infinite sequence of
this form can exist, we define a function $\counter:\vec E \ra \N^2$
and show that $\counter(e)>\counter(e')$ (using the lexicographic
order on $\N^2$) whenever $e,e' \in \vec E$ do not lie in the tree
$\tree$, and $e'$ is on the path $\Phi(e)$, as follows.

For each edge
$e_{g,a}^0$ with $g \in G_{v_0}$ and $a \in A_{v_0}$
(in the Cayley graph $\Gamma_{v_0}$
of $G_{v_0}$ over $A_{v_0}$),
the \emph{descending chain length} of $e_{g,a}^0$,
denoted $\dcl_0(e_{g,a}^0)$, is defined to be the
maximum possible number of edges of $\Gamma_{v_0}$
in a sequence
$e_{g,a}^0=e_1^0,e_2^0,...$ such that
each $e_i^0$ is an edge of $\Gamma_0$ lying
outside of the spanning tree associated to
the flow function $\Phi_0$ and $e_{i+1}^0$ lies
on $\Phi_0(e_i^0)$ for all indices $i$.
Because $\Phi$ is a flow function, this maximum is finite.

For each letter $f \in \velam$,
let $\vec E_{\vend(f)}$ be the set of directed edges
$e_{g,a}^f$ (with $g \in G_{\vend(f)}$ and $a \in A_{\vend(f)}$)
in the Cayley graph $\Gamma_{\vend(f)}$
of $G_{\vend(f)}$ over $A_{\vend(f)}$.
Also let $\vec E_{sub,f}$ be the subset of $\vec E_{\vend(f)}$ of edges
$e_{h,b}^f$ satisfying that $h \in \homom_f(G_f)$
and $b \in B_f$; that is, $e_{h,b}^f$ is in the
subgraph of $\Gamma_{\vend(f)}$ that is the Cayley
graph for $\homom_f(G_f)$ over its generating set $B_f$.
For each edge $e_{g,a}^f$ in $\vec E_{\vend(f)}$, its
\emph{$f$--descending chain length},
denoted $\dcl_f(e_{g,a}^f)$, is defined to be the
maximum possible number of edges
in a sequence $e_{g,a}^f=e_1^f,e_2^f,...$ such that each $e_i^f$ is
an edge in $\vec E_{\vend(f)} \setminus \vec E_{sub,f}$ lying
outside of the spanning tree associated to the flow function
$\Phi_f$ and $e_{i+1}^f$ lies on $\Phi_f(e_i^f)$ for all indices
$i$. Note that for all $h \in \homom_f(G_f)$ the edge $e_{hg,a}^f$
also lies in $\vec E_{\vend(f)}$; the \emph{invariant
$f$--descending chain length} of $e_{g,a}^f$ is
$$
\idcl_f(e_{g,a}^f) := \min\{\dcl_f(e_{hg,a}^f) \mid h \in
\homom_f(G_f)\}.
$$
Note that for all edges $e_{h,b}^f \in \vec E_{sub,f}$,
we have $\dcl_f(e_{h,b}^f)=\idcl_f(e_{h,b}^f) = 0$, and
for all edges $e_{g,a}^f \in \vec E_{\vend(f)} \setminus \vec E_{sub,f}$,
we have $\dcl_f(e_{g,a}^f) \ge \idcl_f(e_{g,a}^f)  \ge 1$.


Recall that for any word $w$ over $A^*$,
the symbol $\ell(w)$ denotes the length of the
word $w$.  We now define
$\counter:\vec E \ra \N^2$ by
\begin{eqnarray*}
\counter(e_{g,a})
&:=&
(\counter_1(e_{g,a}),\counter_2(e_{g,a})) \text{ where}\\
\counter_1(e_{g,a}) &:=& \ell(\edgeonly(\infl(\nf{g}a))), \text{ and}\\
\counter_2(e_{g,a}) &:=& \begin{cases}
\dcl_{0}(e_{\nf{g},a}^{0}) &
    \text{if } \epair(\nf{g},a)=0 \\
\idcl_{\epair(\nf{g},a)}(e_{\suf_{\vst(a)}(\nf{g}),a}^{\epair(\nf{g},a)}) &
    \text{if } \epair(\nf{g},a) \in \velam. \end{cases}
\end{eqnarray*}

Let $e=e_{g,a}$ be any element of $\vec E$ that
does not lie in the spanning tree $\tree$, and let
$e'=e_{g',a'}$ be any edge on $\Phi(e)$ that also
is not in $\tree$.
Let $y:=\nf{g}$ and $y':=\nf{g'}$, and let $f:=\epair(y,a)$.

We consider the three cases
of the definition of $\phi(y,a)$ in turn.

\noindent{\em Case 1. Suppose that $a \in \velnot$.}
Since the word $y$ is in $\Nf$, then the inflation
$\infl(y)$ is an element of $\widetilde\Nf$.  Thus either the word
$\infl(y)\pt(\vend(y),\vst(a))a$
is also in $\widetilde\Nf$, in which case $ya \in \Nf$,
or else $\infl(y)$, and hence also the word $y$,
ends with $\overline{a}=a^{-1}$.
Then the edge $e=e_{g,a}$ lies in the spanning tree $T$ in $\Gamma$,
giving a contradiction.  So this case cannot occur.

\noindent{\em Case 2. Suppose that
$f:=\epair(y,a) \in \velam$,
$a \in B_{f}$, and $\suf_{\vst(a)}(y)=\emptyword$.}
Then $\phi(y,a)=\defl(f^{-1}~\widehat{a}_{f}~f)$
with $\widehat{a}_{f} \in A_{\vst(f)}^*$.
If $f \in \velnot$, then
the edges $e_{g,f^{-1}}$ and
$e_{gf^{-1}\widehat a_f,f}$ lying in the path $\Phi(e)$
are labeled by elements of $\velnot$, and hence (by Case~1)
lie in the spanning tree $\tree$ of the Cayley graph.
Consequently the edge $e'$ cannot be one of those two edges.
Thus there is a factorization
$\phi(y,a)=\defl(f^{-1})wa'w'\defl(f)$ for some words $w,w' \in A_{\vst(f)}^*$
such that $e'=e_{g',a'}$ and $g'=_G gf^{-1}w$.

Write  (as usual) $\infl(y)=w_0e_1t_1e_2t_2\cdots e_kt_k \in
\widetilde\Nf$ such that $e_1 \cdots e_k$ is an edge path in
$\Lambda$ from $v_0$ to $\vend(y)$, $w_0 \in \Nf_0$, each $t_i \in
\Nf_{\Tr,e_i}$, no inverse pair $e\overline{e}$ appears as a
subword, and the last letter of $\infl(y)$ (which is also the last
letter of $y$) lies in $A$. Write $\pt(\vend(y),\vst(a))=f_1 \cdots
f_m$ with $m \ge 0$ and each $f_j \in \velam \setminus \velnot$.
Then $\infl(ya)=\infl(y)\pt(\vend(y),\vst(a))a$ and so
$\counter_1(e)=\ell(\edgeonly(\infl(ya)))=k+m$.

Note that either $m>0$, in which case $f=f_m \in \velam \setminus \velnot$
and $\defl(f)=\emptyword$,
or else $m=0$, in which case $\vend(y)=\vst(a)$ and
$f=e_k \in \velnot$ is the last letter of $y$.
For the rest of Case~2 we consider the situation that
$m>0$; the proof for $m=0$ is similar.

Note that $y'=\nf{g'}=\nf{yw}=\nf{\infl(y)w}$.
Applying Lemma~\ref{lem:computenf} to the words
$\tilde{y}:=\infl(y)$ and $w \in  A_{\vst(f)}^*$
shows that
$$
\infl(y')=\mathsf{nf}_{\widetilde\Nf}(y')=
\trim(w_0'
    e_1x_{1}e_2 \cdots e_kx_{k}
    f_1 w_{1} f_2 \cdots f_{m-1} w_{m-1})
$$
for some words $w_0' \in \Nf_0$, $x_i \in \Nf_{\Tr,e_i}$
and $w_j \in \Nf_{\Tr,f_j}$.
Since the word being pruned away is freely reduced and
represents a path in $\tlam$, the
suffix that is removed is
$\pt(\vend(y'),\vend(f_{m-1}))$.
For the letter $a' \in A_{\vst(f)}=A_{\vend(f_{m-1})}$ we
have $\infl(y'a')=\infl(y')\pt(\vend(y'),\vst(f_{m-1}))$,
and so $\edgeonly(\infl(y'a'))=e_1 \cdots e_kf_1 \cdots f_{m-1}$;
that is, $\counter_1(e')=k+m-1$.  Therefore $\counter(e)>\counter(e')$
in this case.


\noindent{\em Case 3. Suppose that $a \in A_f$ and either $f=0$,
or else ($f \in \velam$ and either $\suf_{\vst(a)}(y) \neq
\emptyword$ or $a \notin B_f$).} This case is split into two
subcases.

\noindent{\em Case 3.1.  Suppose that $f=0$.} In this case the word
$y\in\Nf_0$ for the flow function $\Phi_0$ on the vertex group
$G_{v_0}$, and the letter $a\in A_{v_0}$, the generating set for
this vertex group. In this case the edge $e$ lies in the Cayley
graph $\Gamma_{A_{v_0}}(G_{v_0})$ of the vertex group (considered as
a subgraph of $\Gamma$), and
$\phi(y,a)=\phi_{0}(\suf_{\vst(a)}(y),a)=\phi_0(y,a)$. Then the edge
$e'$ on $\Phi(e)$ is also an edge on $\Phi_0(e)$, and so
$\alpha_1(e)=0=\alpha_1(e')$. Moreover, the descending chain lengths
of these edges satisfy $\dcl_0(e)>\dcl_0(e')$, and so
$\alpha_2(e)>\alpha_2(e')$.  Therefore $\alpha(e)>\alpha(e')$.

\noindent{\em Case 3.2.  Suppose that $f \in \velam$.} In this case
we again factor $\phi(y,a)=\phi_{f}(\suf_{\vst(a)}(y),a)=wa'w' \in
A_{\vend(f)}^*$ such that $e'=e_{g',a'}$ for the element $g'=_G yw$
and letter $a' \in \vend(f)$. We write
$\infl(y)=w_0e_1t_1e_2t_2\cdots e_kt_k$ and
$\pt(\vend(y),\vst(a))=f_1 \cdots f_m$ with $m \ge 0$,
and $\counter_1(e)=k+m$.  Applying Lemma~\ref{lem:computenf}
to the words
$\tilde{y}:=\infl(y)$ and $w,a' \in  A_{\vend(f)}^*=A_{\vend(f_{m})}^*$
shows that $\counter_1(e')=k+m=\counter_1(e)$ in this case.

Next consider $\counter_2(e)=\idcl_f(e_{\suf_{\vst(a)}(y),a}^f)$.
Since $f \neq 0$, then $\suf_{\vst(a)}(y)$ lies in $\Nf_{\Tr,f}$,
and so $\suf_{\vst(a)}(y)$ can only represent an element
of $\homom_{f}(G_{f})$ if $\suf_{\vst(a)}(y)=\emptyword$,
in which case $a \notin B_f$.  Hence in Case~3.2 we have
$e=e_{g,a}$ satisfies
$e_{\suf_{\vst(a)}(y),a}^f \in \vec E_{\vend(f)} \setminus \vec E_{sub,f}$,
and $\idcl(e_{\suf_{\vst(a)}(y),a}^f) \ge 1$.

Let $h$ be an element of $\homom_f(G_f)$ achieving the minimum for
this invariant descending chain length; that is,
$\counter_2(e)=\idcl(e_{\suf_{\vst(a)}(y),a}^f)=
\dcl(e_{h\suf_{\vst(a)}(y),a}^f)$. Since $\Phi_f$ is the bounded
flow function of an autostackable structure for $G_{\vend(f)}$ \rsp\
the subgroup $\homom_f(G_f)$, then $\Phi_f$ is
$\homom_f(G_f)$--translation invariant, and so for this element $h$,
we have $\lbl(\Phi_f(e_{h\suf_{\vst(a)}(y),a}^f))=wa'w'$ as well.
Then the edge $e_{h\suf_{\vst(a)}(y)w,a'}^f$ lies on the path
$\Phi_f(e_{h\suf_{\vst(a)}(y),a}^f)$. Now the descending chain
lengths satisfy
$\dcl(e_{h\suf_{\vst(a)}(y)w,a'})<\dcl(e_{h\suf_{\vst(a)}(y),a})=
\counter_2(e)$.

In order to compute $\counter_2(e')$, we first note that
Lemma~\ref{lem:computenf} also shows that $\epair(y',a')=f$ and so
$\counter_2(e')=\idcl_f(e_{\suf_{\vst(a)}(y'),a'}^f)$. However, by
definition,
$\idcl_f(e_{\suf_{\vst(a)}(y'),a'}^f)\le\dcl(e_{\suf_{\vst(a)}(y'),a'}^f)<\dcl(e_{h\suf_{\vst(a)}(y),a})=\counter_2(e)$,
as desired.
%
Hence, $\counter(e')<\counter(e)$ in this last case as well.

\medskip

\noindent\textbf{Autostackability:}

Next we show that the graph of the stacking function
$\phi$ associated to the flow function $\Phi$ is
a regular subset of $(A^*)^3$
in the case that the flow function $\Phi_0$
and each of the flow functions
$\Phi_f$ associated to the directed edges $f \in \velam$
gives an autostackable structure; that is,
the sets
\begin{eqnarray*}
\Graph{\Phi_0}&=&\{(y,a,\phi_0(y,a)) \mid y \in \Nf_0, a \in A_{0}\}
\text{ and} \\
\Graph{\Phi_f}&=&\{(y,a,\phi_f(y,a)) \mid y \in \Nf_f, a \in A_{\vend(f)}\}
\end{eqnarray*}
 are regular.

We begin by noting that Lemma~\ref{lemma:respnf} and
Proposition~\ref{prop:gognf} together show that the set
$\Nf$ of normal forms associated to the spanning
tree in $\Gamma$ for $\Phi$ is a regular language,
and moreover the set $\widetilde{\Nf}$ is also
a regular language of normal forms for $\pi_1(\gog)$,
with $\Nf=\defl(\widetilde{\Nf})$.

We proceed by breaking down the graph of $\Phi$ using the three
cases in the piecewise definition of its stacking function $\phi$:
\begin{eqnarray*}
\Graph{\Phi} &=&
\left( \cup_{a \in \velam \setminus \velnot}~
    \Nf \times \{a\} \times \{a\}\right) \\
&& \bigcup \left(\cup_{f \in \velam} \cup_{a \in B_f}~
   L_{f,a} \times \{a\} \times \{\defl(f^{-1}\widehat{a}_f f)\}\right) \\
&& \bigcup \left(\cup_{f \in \velam}\cup_{a \in A_{\vend(f)}}
   \cup_{w \in \mathsf{im}(\phi_f)}
   L_{f,a,w}' \times \{a\} \times \{ w \} \right)\\
&& \text{ } \hspace{.2in}  \bigcup \left(\cup_{a \in A_{v_0}}
   \cup_{w \in \mathsf{im}(\phi_0)}
   L_{0,a,w}' \times \{a\} \times \{ w \}\right)
\end{eqnarray*}
where
\begin{eqnarray*}
L_{f,a} &:=& \{ y \in \Nf \mid \epair(y,a)=f \text{ and }
  \suf_{\vst(a)}(y)=\emptyword\}, \\
L_{f,a,w}' &:=& \{ y \in \Nf \mid \epair(y,a)=f,~
   \suf_{\vst(a)}(y) \neq \emptyword \text{ and }
   \phi_f(\suf_{\vst(a)}(y),a)=w \}
  \text{ if } a \in B_f, \\
L_{f,a,w}' &:=& \{ y \in \Nf \mid \epair(y,a)=f \text{ and }
  \phi_f(\suf_{\vst(a)}(y),a)=w\}
  \text{ if } a \in A_{\vend(f)} \setminus B_f,  \text{ and }\\
L_{0,a,w}' &:=& \{ y \in \Nf \mid \epair(y,a)=0 \text{ and }
  \phi_0(\suf_{\vst(a)}(y),a)=w\}.
\end{eqnarray*}

Since the graph $\Lambda$ is finite
and each of the flow functions $\Phi_0$ and $\Phi_f$ are
bounded, this decomposition of $\Graph{\Phi}$ is a
finite union of subsets. The closure properties
of regular languages in Theorem~\ref{thm:regclosure}
imply that in order to show that $\Graph{\Phi}$ is regular,
it suffices to show that the languages
$L_{f,a}$ and $L_{f,a,w}'$ are regular.

Let $f \in \velam$ and $a \in A_{\vend(f)}$.
Since $\epair(y,a)$ is the last letter of the image
of $\infl(ya)$ under the monoid homomorphism
$\edgeonly$, any word $y$
in the normal form set $\Nf$ satisfying $\epair(y,a)=f$
either satisfies $\vend(y)=u$ for some vertex $u$ such that
$\pt(u,\vst(a))$ ends with $f$,
or else has inflation with a suffix in $f A_{\vend(f)}^*$.
Let $V_f$ be the set of vertices $u$ of $\Lambda$
such that the last edge of the path $\pt(u,\vst(a))$
is $f$.  For each vertex $v$ of $\Lambda$,
let $C_{v}$ be the (finite) set of all letters $c$ in $A$
with $\vend(c)=v$; that is, $C_v$ is the union
of $A_v$ with all of the edges in $\velam \setminus \velnot$
whose terminal vertex is $v$.
Then the set 
of normal form words $y$ with
$\epair(y,a)=f$ is
$$
R_{f,a}:=\{y \in \Nf \mid \epair(y,a)=f\}=
\left(\cup_{u \in V_f} \Nf \cap A^*C_u \right)
\bigcup R_{f,a}'
\bigcup \defl(\widetilde{\Nf} \cap A^*f A_{\vend(f)}^*)
$$
where $R_{f,a}'=\{\emptyword\}$ if $\pt(v_0,\vst(a))$ ends with the
letter $f$, and $R_{f,a}'=\emptyset$ otherwise. Since $\Nf$,
$\widetilde{\Nf}$, and $R_{f,a}'$ are regular, as are the
concatenations $A^*C_u$ and $A^* f A_{\vend(f)}^*$,
Theorem~\ref{thm:regclosure} implies that $R_{f,a}$ is also a
regular language.

Next suppose that $a \in A_{\vst(a)}$.  The set 
of normal form words $y$ whose maximal suffix
in $A_{\vst(a)}^*$ is empty is
$$
S_a:=\{y \in \Nf \mid \suf_{\vst(a)}(y)=\emptyword\} =
\{\emptyword\} \cup (\Nf \cap A^*(A \setminus A_{\vst(a)})).
$$
This union of a finite set with
an intersection  of regular languages is regular.
Now for all $f \in \velam$ and $a \in B_f$, the
language $L_{f,a}$ is the intersection
$
L_{f,a}=R_{f,a} \cap S_a,
$
and hence $L_{f,a}$ is regular.

Finally, suppose that
either $f \in \velam$ and $a \in A_{\vend(f)}$
or else $f=0$ and $a \in A_{v_0}$, and
let $w$ be a word in the image of $\phi_f$.
The set of normal forms $y$ for which the
image of the edge $e_{\suf_{\vst(a)}(y),a}$ under
$\Phi_f$ has label $w$ is
$$
Q_{f,a,w}:=\{y \in \Nf \mid \phi_f(\suf_{\vst(a)}(y),a)=w\}=
   \Nf \cap \left(S_a \cdot p_1(\Graph{\Phi_f} \cap
   (A_{\vst(a)}^* \times \{a\} \times \{w\}))\right)
$$
where $p_1$ denotes projection on the first coordinate.
Again applying Theorem~\ref{thm:regclosure} and
closure properties of regular languages,
the set $Q_{f,a,w}$ is regular.

Now if $a \in B_f$ then $L_{f,a,w}'=R_{f,a} \cap (\Nf\setminus S_a)
\cap Q_{f,a,w}$, and if $a \in A_{\vend(f)} \setminus B_f$ then
$L_{f,a,w}'=R_{f,a} \cap Q_{f,a,w}$. Finally if $f=0$ and $a \in
A_{v_0}$, then $\epair(y,a)=0$ for a normal form $y$ if and only if
$y$ lies in the regular language $\Nf_0$, and so $L_{0,a,w}'=\Nf_0
\cap Q_{0,a,w}$. Thus in all three cases $L_{f,a,w}'$ is regular.

Therefore $\Graph\Phi$ is
regular and $\pi_1(\gog)$ is autostackable.
\end{proof}



\section{Extensions and autostackability respecting subgroups}\label{sec:extn}


In this section
we record two results on autostackability {\rsp} subgroups,
for finite extensions and finite index supergroups,
which will be used in Section~\ref{sec:3mfld} in our
analysis of Seifert-fibered pieces of 3-manifolds.
The first uses the proof
of the closure of autostackability under group
extensions in~\cite[Theorem~3.3]{BHJ:closure}.

\begin{thm}\label{thm:closureextn}
Let
$
1   \rightarrow K \overset{i}{\rightarrow}G \overset{q}{\rightarrow} Q \rightarrow 1
$
be a short exact sequence of groups and group homomorphisms,
and let $H$ be a subgroup of $G$ containing $K$.
If $Q$ is autostackable [respectively, stackable] {\rsp} $q(H)$ and
$K$ is autostackable, then $G$ 
is also autostackable [respectively, stackable] {\rsp} $H$.
\end{thm}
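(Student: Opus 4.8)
The plan is to mirror the construction used for closure of autostackability under extensions in \cite[Theorem~3.3]{BHJ:closure}, but to choose every lift and every normal form language compatibly with the subgroup $H$, and then to verify the two additional conditions (subgroup closure and $H$--translation invariance) built into autostackability {\rsp} $H$. First I would fix generating sets: let $A_Q$ carry the autostackable structure on $Q$ {\rsp} $q(H)$, with $B_Q \subseteq A_Q$ generating $q(H)$, and let $A_K$ generate $K$. Since $q|_H\colon H \to q(H)$ is onto and $K \le H$, I can choose a set-theoretic section $s\colon Q \to G$ of $q$ with $s(q(H)) \subseteq H$; lifting generators through $s$ gives $\hat A_Q \subseteq G$ with $\hat B_Q \subseteq H$, and I set $A := \hat A_Q \cup A_K$ and $B := \hat B_Q \cup A_K$, so that $G = \langle A\rangle$ and $H = \langle B\rangle$.

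Next I would assemble the normal forms. By Lemma~\ref{lemma:respnf} the structure on $Q$ yields regular prefix-closed sets $\Nf_{q(H)} \subseteq B_Q^*$ and $\Nf_{\Tr_Q}$ with $\Nf_Q = \Nf_{q(H)}\Nf_{\Tr_Q}$, and $K$ autostackable yields a regular prefix-closed $\Nf_K \subseteq A_K^*$. Writing $\widehat{(\cdot)}$ for the letter-by-letter lift, I define $\Nf_H := \widehat{\Nf_{q(H)}}\,\Nf_K$, $\Nf_{\Tr} := \widehat{\Nf_{\Tr_Q}}$, and $\Nf_G := \Nf_H\,\Nf_{\Tr}$. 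Using that $K=\ker q$ is normal, that $K \le H$, and that $q$ therefore induces a bijection $H\backslash G \cong q(H)\backslash Q$, I would check that each $g$ factors uniquely as $g = \hat h\,k\,\hat t$, where $\hat h \in H$ lifts the $q(H)$--part of $q(g)$, $k \in K$, and $\hat t$ lifts the transversal part of $q(g)$. This exhibits $\Nf_G$ as a set of normal forms of the required shape $\Nf_H\Nf_{\Tr}$, with $\Nf_H$ the normal forms of the extension $H$ (autostackable by \cite[Theorem~3.3]{BHJ:closure} applied to $1 \to K \to H \to q(H) \to 1$). Regularity and prefix-closure follow from Theorem~\ref{thm:regclosure}, since lifts are homomorphic images and the three factors are regular and prefix-closed.

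The heart of the argument is the flow function $\Phi$. On edges inside $\Gamma_B(H)$ the transversal part is empty, so I would simply take the flow function of the extension structure on $H$; this makes subgroup closure immediate and keeps all labels in $B^*$. Off $H$ the normal form carries a nonempty transversal block $\hat t$ to the right of the central $K$--block, and I would build $\Phi$ by combining the lifted $Q$--flow $\widehat{\Phi_Q}$ (to move in the transversal direction) with the $K$--flow $\Phi_K$ (on the central block), as in \cite[Theorem~3.3]{BHJ:closure}. The genuinely new phenomenon is that an appended generator $a \in A_K$ now lands to the right of the transversal block and must be transported leftwards into the central $K$--block; using normality of $K$ this is done one transversal letter at a time, replacing the last transversal letter $\hat c$ together with $a$ by $\hat c^{-1}(\hat c a \hat c^{-1})\hat c$, where $\hat c a \hat c^{-1}$ is a fixed bounded word in $A_K^*$. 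Each such replacement keeps $\Phi$ bounded, so (F1) holds; it fixes the tree edges labeled $\hat c^{\pm 1}$, so (F2) reduces to the corresponding property of $\Phi_K$ and $\Phi_Q$; and it strictly decreases the transversal depth of the letter being transported. \textbf{The main obstacle is (F3).} I would resolve it by a lexicographic complexity measure $\alpha\colon \vec E \to \N^2$ in the spirit of the measure used in the proof of Theorem~\ref{thm:GoGautostack}: the first coordinate records how many transversal letters separate the acting position of the appended generator from the central $K$--block, and the second records the descending chain length of the relevant component flow ($\Phi_K$ or $\widehat{\Phi_Q}$) once that distance is $0$. Verifying that such a measure strictly decreases along every non-tree edge of a flow path is the crux, since a lifted $Q$--relator may itself carry $K$--content that reenters the transport process. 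Regularity of $\Graph{\Phi}$ then follows, exactly as in Theorem~\ref{thm:GoGautostack}, by decomposing $\Graph{\Phi}$ into finitely many pieces indexed by the cases of the definition and the boundedly many images of the component flows, each piece being regular by Theorem~\ref{thm:regclosure}.

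Finally I would verify $H$--translation invariance. The subtree $T'$ is the lift of the transversal tree of the $Q$--structure, and the decomposition $T_G = T_H \cup \bigcup_{h\in H} hT'$ into disjoint $H$--translates follows from the coset bijection $H\backslash G \cong q(H)\backslash Q$. Because $q$ collapses $K$, left multiplication by $h \in H$ on a vertex off $\Gamma_B(H)$ leaves its transversal part unchanged and covers left multiplication by $q(h) \in q(H)$ in $Q$; since the $Q$--flow is $q(H)$--translation invariant and the transported conjugates $\hat c a \hat c^{-1}$ depend only on the transversal letters, the flow labels are preserved, giving the required invariance. Everything outside the flow function construction is bookkeeping on top of \cite[Theorem~3.3]{BHJ:closure}, Theorem~\ref{thm:regclosure}, and Lemma~\ref{lemma:respnf}; the one delicate point is establishing (F3) in the presence of the central $K$--block.
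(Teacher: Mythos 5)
Your high-level strategy --- rerun the extension construction of \cite[Theorem~3.3]{BHJ:closure} with all lifts chosen inside $H$, then verify subgroup closure and $H$--translation invariance --- is the same as the paper's, but one structural choice derails it: you put the $K$--block in the \emph{middle} of the normal forms, taking $\Nf_G=\hhh(\Nf_{q(H)})\,\Nf_K\,\hhh(\NTr)$ and $\Nf_H=\hhh(\Nf_{q(H)})\,\Nf_K$, whereas the paper (following \cite{BHJ:closure}) puts it on the far left, $\Nf_G=\Nf_K\,\hhh(\Nf_{q(H)})\,\hhh(\NTr)=\Nf_K\,\hhh(\Nf_Q)$. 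With the paper's ordering, the flow function for $G$ is \emph{literally} the one built in \cite[Theorem~3.3]{BHJ:closure} for the extension $1\to K\to G\to Q\to 1$ (the factorization $\Nf_Q=\Nf_{q(H)}\NTr$ from Lemma~\ref{lemma:respnf} is merely overlaid on it), so (F1)--(F3) and regularity of $\Graph{\Phi}$ are quoted wholesale, and the only new work is the tree decomposition, subgroup closure, and $H$--translation invariance --- there is no ``(F3) crux'' at all. Your ordering forfeits exactly this: neither your $G$--structure nor your $H$--structure is the one produced by \cite[Theorem~3.3]{BHJ:closure}; in particular, that theorem applied to $1\to K\to H\to q(H)\to 1$ yields normal forms in the opposite concatenation order $\Nf_K\,\hhh(\Nf_{q(H)})$, so your appeal to it for the flow function on $\Gamma_B(H)$ does not go through.

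That mismatch is where the gap becomes genuine rather than cosmetic. With the $K$--block to the right of the lifted $q(H)$--block, an appended lifted generator $\hat b$ at an $H$--vertex $\hat h k$ must be transported leftwards past an arbitrarily long $K$--word, and once it reaches the $q(H)$--block the lifted flow word $\hhh(\phi_{q(H)}(\cdot,\cdot))$ places its edges at vertices whose own $K$--blocks (the corrections $\hhh(\nf{q(\hat h)u})^{-1}\hat h\,\hhh(u)\in K$) are unbounded; so your proposed two--coordinate measure (transversal distance, then descending chain length of the component flow) does not decrease --- precisely the failure mode you flag as ``the crux'' and then leave unproved. A correct measure in your setting does exist (take as first coordinate the descending chain length under $\Phi_{q(H)}$ of the $q$--image edge, using that a lifted edge at a vertex with empty $K$--block lies in the tree of $\Gamma_B(H)$ if and only if its $q$--image lies in the tree for $q(H)$), but carrying that out, together with a fresh regularity argument for $\Graph{\Phi}$, amounts to reproving \cite[Theorem~3.3]{BHJ:closure} in mirrored form --- work your proposal does not do. The repair is simply to adopt the paper's ordering with $\Nf_K$ first: then the transport of $K$--letters and the termination argument are inherited verbatim from \cite{BHJ:closure}, and your final paragraph (translation invariance via $q(H)$--invariance of $\Phi_Q$, plus the fact that left multiplication by $H$ fixes the transversal block) matches the paper's actual new content and goes through essentially as written.
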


\begin{proof}
Let $\Nf_K$, $\Phi_K$, and $\phi_K$ be the regular
prefix-closed normal form
set, bounded flow function, and associated stacking
map for $K$ over a (finite inverse-closed) generating
set $A_K$, and similarly let
$\Nf_Q=\Nf_{q(H)}\NTr$, $\Phi_Q$, and $\phi_Q$ be the
regular prefix-closed normal form
set, bounded flow function, and associated stacking
map for $Q$ over a generating set $C$ of $Q$, {\rsp}
the subgroup $q(H)$ with generating set $D \subset C$
and regular normal forms $\Nf_{q(H)}$.
By slight abuse of notation, we
will consider the homomorphism $i$ to be an
inclusion map, and $A,K,H \subseteq G$,
so that we may omit writing $i(\cdot)$.

For each $c \in C$, let $\hat c$ be an element of $G$ satisfying
$q(\hat c)=c$ (and choose these elements such that
$\hat{c^{-1}}=\hat{c}^{-1}$), and let $\hat C:=\{\hat c \mid c \in
C\}$. We note that $A_K \cap \hat C=\emptyset$, since an element of
the generating set $C$ for the autostackable structure on $Q$ does
not represent the identity in $Q$. Define the monoid homomorphism
$\hhh:C^* \ra \hat C^*$ by $\hhh(c):=\hat c$ for all $c \in C$. Let
$A := A_K \cup \hat C$ and $B:= A_K \cup \hat D$, and let
$$
\Nf_G:=\Nf_K \hhh(\Nf_Q)=\Nf_K \hhh(\Nf_{q(H)}) \hhh(\NTr).
$$
Then $\Nf_G$ is a prefix-closed regular language
of normal forms for the group $G$.

For any $g \in G$, write $\nf{g}=r_gs_gt_g$ with $r_g \in \Nf_K$,
$s_g \in \hhh(\Nf_{q(H)})$, and $t_g \in \hhh(\NTr)$;
similarly, for $y \in \Nf_G$, write
$y=r_ys_yt_y$ with $r_y \in \Nf_K$,
$s_y \in \hhh(\Nf_{q(H)})$, and $t_y \in \hhh(\NTr)$
For any nonempty word $w$, let $\last(w)$ denote
the last letter of $w$.

Define a function $\phi: \Nf_G \times A \rightarrow A^*$ by
$$
\phi(y, a) = \left\lbrace
\begin{array}{l l }
\phi_{K}(y,a)
  & \mbox{if } a \in A_K \mbox{ and }  s_yt_y=\emptyword
\\
\last(y)^{-1}r_{\last(y)a\last(y)^{-1}}\last(y)
  & \mbox{if }  a \in A_K \text{ and } s_yt_y \neq \emptyword
\\
r_{a (\hhh(\phi_Q(q(s_yt_y),q(a))))^{-1} }  {\mathsf{hat}}(\phi_Q(q(s_yt_y),q(a)))
  & \mbox{if } a \in \hat{C}.
\end{array} \right.
$$

Let $\Gamma:=\Gamma_A(G)$ be the Cayley graph of $G$ with respect to $A$,
and let $\vec E$ and $\vec P$ be the sets of directed edges
and directed paths in $\Gamma$.
Also define $\Phi:\vec E \ra \vec P$ by
$\Phi(e_{g,a}):=$ the path in $\Gamma$ starting
at $g$ and labeled by $\phi(\nf{g},a)$.

Now the proof of~\cite[Theorem~3.3]{BHJ:closure}
shows that $\Phi$ is a bounded flow function for
$G$ over $A$, and that the graph of $\Phi$ is regular;
that is, $\Phi$ gives an autostackable structure
for $G$ over $A$.

Let $T$ be the spanning tree of $\Gamma$ determined by the normal
form set $\Nf_G$. Since the normal form set $\Nf_G$ of this
structure is the concatenation of $\Nf_K \hhh(\Nf_{q(H)})$, which is
a set of normal forms for the subgroup $H$ over $B$, with the set
$\hhh(\NTr)$, which is a set of normal forms over $A$ for a right
transversal of $H$ in $G$, the tree $T$ of $\Gamma$ associated to
the set $\Nf_G$ is the union of a spanning tree in the Cayley
subgraph $\Gamma_{B}(H)$ with an $H$--orbit of a transversal tree
for $H$ in $G$, as required.

Suppose that $h \in H$ and $b \in B$,
and consider the edge $e:=e_{h,b}$ of $\Gamma_{B}(H)$
contained in $\Gamma$.  If $h \in K$ and $b \in A_K$,
then the first case of the definition of $\phi$
shows that $\lbl(\Phi(e)) \in A_K^* \subseteq B^*$.
If $h \in H \setminus K$ and $b \in A_K$,
then the second case of the definition of $\phi$
applies, and since $\last(\nf{h}) \in \hat D \subseteq B$
and $r_{\last(y)a\last(y)^{-1}} \in B^*$,
again the label of $\Phi(e)$ is in $B^*$.
Finally, if $b \in B \setminus A_K$, then the
third case applies.  In this case, since
$\Phi_Q$ arises from an autostackable structure
{\rsp} $q(H)$, and since $q(b) \in D$,
the word $\phi_Q(q(h),q(b))$ labeling
$\Phi_Q(e_{q(h),q(b)})$ in $\Gamma_{C}(Q)$
must be in $D^*$.  Then ${\mathsf{hat}}(\phi_Q(q(h),q(a))) \in \hat D^*$,
and since
$r_{a (\hhh(\phi_Q(q (y), q(a))))^{-1} }  \in A_K^*$, we have
$\lbl(\Phi(e)) \in B^*$.  Thus $\Phi$ satisfies the
subgroup closure property for $H$.

On the other hand, suppose that $e=e_{g,a}$
is an edge of $\Gamma$ that
does not lie in $\Gamma_B(H)$, and that $h \in H$.
If $a \in A_K$, then $g \notin H$, and so
$\nf{g} \notin \Nf_K \hhh(\Nf_{q(H)})$.
In this case, then $t_g \neq \emptyword$ and so the letter
$\ell:=\last(\nf{g})$ lies in $\hat C \setminus \hat D$.
Since $\nf{hg}=\nf{h}t_g$, then $\last(\nf{hg})=\ell$ as well,
and case~2 of $\phi$
shows that $\lbl(\Phi(e_{hg,a}))=
\ell^{-1}r_{\ell a\ell^{-1}}\ell=\lbl(\Phi(e))$.
Suppose instead that $a \notin A_K$.  Then either
$a \in \hat D$ and $g \notin H$, or else $a \in \hat C \setminus \hat D$.
Hence either $q(g) \notin q(H)$ or else $q(a) \notin D$,
and so the edge $e_{q(g),q(a)}$ is not in the
Cayley subgraph $\Gamma_D(q(H))$ of $\Gamma_C(Q)$.
For this case $\lbl(\Phi(e))=
r_{a (\hhh(\phi_Q(q(s_yt_y),q(a))))^{-1}}
    {\mathsf{hat}}(\phi_Q(q(s_yt_y),q(a)))$.
Note that $q(H)$--translation invariance of the autostackable
structure $\Phi_Q$ for $Q$ {\rsp} $q(H)$, together with the fact
that the $H$--coset representatives for $y$ and $hy$ satisfy
$t_y=t_{hy}$, implies that
$\phi_Q(q(s_yt_y),q(a))=\phi_Q(q(s_{hy}t_{hy}),q(a))$, and so
$\lbl(\Phi(e))=\lbl(\Phi(e_{hg,a}))$ in this case as well. Hence
$\Phi$ is also $H$--translation invariant.
\end{proof}

The next result is shown by the first two authors and
Johnson in~\cite{BHJ:closure}.  Although
the theorem as stated in that paper did not use the
phrase ``{\rsp} $H$'', the proof
does imply this extra property.

\begin{prop}\cite[Theorem~3.4]{BHJ:closure}\label{prop:closuresupergp}
Let $H$ be an autostackable [respectively, stackable] group,
and let $G$ be a group containing $H$ as a subgroup of finite index.
Then $G$ is autostackable [respectively, stackable] {\rsp} $H$.
\end{prop}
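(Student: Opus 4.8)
The plan is to invoke the construction of Brittenham, Hermiller, and Johnson in~\cite[Theorem~3.4]{BHJ:closure}, which already produces an autostackable [respectively, stackable] structure on $G$, and then to read off from that construction the two additional conditions defining autostackability {\rsp} $H$. First I would fix notation: since $[G:H]<\infty$, choose a finite right transversal $\Tr=\{g_1=\groupid,g_2,\dots,g_n\}$ for $H$ in $G$, introduce a letter $t_k$ representing $g_k$ for each $k\ge 2$, and set $A:=B\sqcup S$, where $B$ is the inverse-closed generating set carrying the autostackable [stackable] structure on $H$, with $H$-flow function $\Phi_H$ and normal forms $\Nf_H$, and $S$ is the finite inverse-closed set of letters $t_k^{\pm 1}$. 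The normal form set produced by~\cite{BHJ:closure} has the form $\Nf_G=\Nf_H\cdot\NTr$ with $\NTr=\{\emptyword,t_2,\dots,t_n\}\subset A^*$ a finite prefix-closed transversal for the right cosets $H\backslash G$ and $\Nf_H\subset B^*$; thus the factorization $\Nf_G=\Nf_H\NTr$ required of a structure {\rsp} $H$ holds at the outset. I would then quote~\cite[Theorem~3.4]{BHJ:closure} for the fact that the associated flow function $\Phi$ is bounded, satisfies (F1)--(F3), and (in the autostackable case) has regular graph, so that only the subgroup-closure and $H$-translation-invariance conditions remain to be checked.

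The heart of the argument is to record the explicit form of $\Phi$ from~\cite{BHJ:closure} and isolate its key structural feature. Writing $\nf{g}=\nf{h}\,t_i$ with $\nf{h}\in\Nf_H$ and $t_i\in\NTr$ (where $t_i=\emptyword$ exactly when $g\in H$), the flow function acts in two cases. If $g\in H$ and $a\in B$ it restricts to $\Phi_H$, so $\lbl(\Phi(e_{g,a}))=\phi_H(\nf{h},a)\in B^*$; otherwise it rewrites the suffix $t_i a$ of length at most two to a word $u_{i,a}t_j\in B^*\cdot\NTr$, where $g_i a=_G u_{i,a}g_j$ with $u_{i,a}\in B^*$ a fixed word and $g_j\in\Tr$, yielding $\lbl(\Phi(e_{g,a}))=t_i^{-1}u_{i,a}t_j$. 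The sub-instance $g\in H$, $a=t_k$ is the degenerate case $u_{i,a}=\emptyword$, $t_j=t_k$, where the label is just $a$ and the edge lies in $T_G$. The subgroup-closure condition is then immediate from the first case together with the observation that the normal-form factorization $\Nf_G=\Nf_H\NTr$ makes the $H$-tree $T_H$ of $\Phi_H$ exactly the restriction of $T_G$ to $\Gamma_B(H)$: for all $h\in H$ and $b\in B$ the label lies in $B^*$, and $B\subseteq A$.

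For $H$-translation invariance, the tree-structure clauses (existence of a subtree $T'$ through $\groupid$ with $T_G=T_H\cup\bigcup_{h\in H}hT'$, disjointness of the translates $hT'$, and $hT'\cap T_H=\{h\}$) follow directly from the equivalence recorded when autostackability {\rsp} $H$ was defined, namely that those conditions on $T_G$ correspond precisely to the factorization $\Nf_G=\Nf_H\NTr$, which we have established; concretely $T'$ is the star consisting of $\groupid$ together with the edges $e_{\groupid,t_k}$ for $2\le k\le n$. It remains to verify the label-invariance clause: for every edge $e_{g,a}$ not lying in $\Gamma_B(H)$ and every $h\in H$, one needs $\lbl(\Phi(e_{g,a}))=\lbl(\Phi(e_{hg,a}))$. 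Every such edge (those with $g\notin H$ or $a\notin B$) falls into the second case above, where the label $t_i^{-1}u_{i,a}t_j$ depends only on the final transversal letter $t_i$ of $\nf{g}$---equivalently, only on the coset $Hg$---and on $a$, since the rewriting rule $t_i a\to u_{i,a}t_j$ is indexed by the pair $(i,a)$ alone. Because $\nf{hg}$ has the same final transversal letter as $\nf{g}$ for every $h\in H$, invariance follows.

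The step I expect to be the main obstacle is matching this reconstruction of the flow function to the one actually built in~\cite{BHJ:closure}: the argument for $H$-translation invariance is valid only if their flow labels genuinely depend on $g$ solely through its coset $Hg$, so the crux is confirming that the rewriting data applied on edges leaving a coset is indexed by $(i,a)$ with no residual dependence on the $H$-part $h$ of $g$. This is exactly the content of the remark preceding the statement---that the proof in~\cite{BHJ:closure}, though not phrased using ``{\rsp} $H$'', already produces labels with this coset-invariance---and it is what must be extracted from their construction to complete the verification.
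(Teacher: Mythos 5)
Your proposal is correct and takes essentially the same route as the paper, which offers no independent argument at all: it simply cites \cite[Theorem~3.4]{BHJ:closure} and remarks that, although that theorem is not phrased using ``{\rsp} $H$,'' its proof already yields the subgroup-closure and $H$-translation-invariance conditions. Your reconstruction of that construction (normal forms $\Nf_G=\Nf_H\NTr$ with a finite prefix-closed transversal of new letters, and flow labels $t_i^{-1}u_{i,a}t_j$ depending only on the pair $(i,a)$, hence only on the coset $Hg$) is faithful to the cited proof, and your verification of the two extra conditions is precisely the content of the paper's remark.
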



\section{Strongly coset automatic groups and relative hyperbolicity}\label{sec:cosetautgp}


In this section we show, in Theorem~\ref{thm:cosetautostack},
an extension to coset automaticity and autostackability
respecting autostackable subgroups,
of the result by the first two authors and
Holt~\cite[Theorem~4.1]{BHH:algorithms}
that every automatic group with respect to prefix-closed
normal forms is autostackable.
We then apply this result to relatively hyperbolic groups. See
Subsections~\ref{sub:cosetaut} and \ref{sub:relhyp} for notation and
definitions.


\begin{thm}\label{thm:cosetautostack}
Let $G$ be a finitely generated group
and $H$ a finitely generated autostackable subgroup of $G$.
If the pair $(G,H)$ is strongly prefix-closed coset automatic,
then $G$ is autostackable {\rsp} $H$.
\end{thm}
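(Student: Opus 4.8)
The plan is to build an autostackable structure for $G$ {\rsp} $H$ by splicing the coset-automatic data for the pair $(G,H)$ onto the autostackable flow function for $H$, in close analogy with the two-coordinate argument of Theorem~\ref{thm:GoGautostack}. First I would fix compatible generating sets: after adjusting generators in the standard way (cf.\ the generating-set discussion in Section~\ref{sub:autostack} and \cite[Proposition~4.3]{HermillerMartinez:HNN}), I may assume a single inverse-closed $A$ so that $(G,H)$ is strongly prefix-closed coset automatic over $A$ with prefix-closed regular transversal $L\subseteq A^*$ and coset $K$-fellow-traveler constant $K$, while $H$ is autostackable over $B:=A\cap H$ with prefix-closed regular normal forms $\Nf_H\subset B^*$ and flow function $\Phi_H$ of bound $K_H$. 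I then set $\Nf_{\Tr}:=L$ and $\Nf_G:=\Nf_H\cdot L$. Writing $\overline w\in G$ for the element a word $w$ represents, one checks this is a set of normal forms: the coset $Hg$ singles out a unique $\ell\in L$, then $g=h_g\overline\ell$ for a unique $h_g\in H$, and $\nf g=y\,\ell$ where $y\in\Nf_H$ is the $H$-normal form of $h_g$. Prefix-closure of $\Nf_H$ and of $L$ (so $\emptyword\in L$, together with $\overline y\in H$ forcing $Hg$ to be read off the $L$-part) gives prefix-closure and correctness of $\Nf_G$, Theorem~\ref{thm:regclosure} gives regularity, and Lemma~\ref{lemma:respnf} recovers the decomposition required by ``autostackable {\rsp} $H$''. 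The spanning tree splits as $T_G=T_H\cup\bigcup_{h\in H}hT'$, where $T'$ is the transversal tree traced by $L$ from $\groupid$; uniqueness of coset representatives yields exactly the conditions $hT'\cap\tilde hT'=\emptyset$ for $h\neq\tilde h$ and $hT'\cap T_H=\{h\}$.

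Next I would define $\Phi$ in two regimes. On edges $e_{h,b}$ of $\Gamma_B(H)$ (so $h\in H$, $b\in B$) I set $\Phi:=\Phi_H$; since $\Phi_H$ labels these by words in $B^*$ and its paths stay inside $\Gamma_B(H)$, this immediately gives the \emph{subgroup closure} condition, and these edges are exempt from translation invariance. On each edge $e_{g,a}$ \emph{not} in $\Gamma_B(H)$ I would build a combing-type flow from the coset data alone. Writing $\nf g=y\ell$, the element $\overline\ell\,a$ lies in a coset $H\overline{\ell'}$ with $\ell'\in L$, so $\ell a=_G h\ell'$ for a connecting element $h$, and the $i=0$ instance of the $H$-coset fellow-traveler property forces $h\in H\cap B_{\Gamma_A(G)}(K)$. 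Since this set is finite and Proposition~\ref{prop:strongimpliesweak} supplies, for each such $h$ and each $a\in A$, a finite-state automaton $M_{h,a}$ recognizing the pairs $(\ell,\ell')\in L\times L$ with $\ell a=_G h\ell'$, I can run the combing construction of \cite{BHH:algorithms} (the flow passing from prefix-closed automatic structures to autostackable ones) on the \emph{coset} language $L$ with these multipliers. This produces bounded reductions of the coset coordinate depending only on $\ell$, $a$, and the bounded element $h$, never on the $H$-prefix $y$, so $\lbl(\Phi(e_{g,a}))$ depends only on the right coset of $g$; this is exactly \emph{$H$-translation invariance}. When the coset coordinate is exhausted, the residual bounded $H$-difference is absorbed at an edge whose initial vertex lies in $H$, i.e.\ an edge of $\Gamma_B(H)$, where control returns to $\Phi_H$.

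I would then verify the flow-function axioms. Property (F1) is immediate: coset-combing steps have length bounded in terms of $K$, the $\Phi_H$ steps by $K_H$, and there are finitely many cases. For (F2) I would check that a non-$\Gamma_B(H)$ tree edge is precisely one with trivial connecting element $h=\groupid$ and $\ell'=\ell a$ (or its reverse), on which the combing flow is the identity, while tree edges in $\Gamma_B(H)$ are fixed by $\Phi_H$. For (F3) I would use a lexicographic measure $\alpha=(\alpha_1,\alpha_2)\in\N^2$ exactly as in Theorem~\ref{thm:GoGautostack}: $\alpha_1$ records the height of the coset coordinate, strictly decreased by the coset-combing flow and constantly $0$ on $\Gamma_B(H)$-edges, while $\alpha_2$ is the invariant $\Phi_H$-descending chain length $\idcl$ of the corresponding $H$-edge; termination of the coset combing is inherited from \cite{BHH:algorithms}, and once $\alpha_1=0$ the argument reduces verbatim to the vertex-group case for $\Phi_H$. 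Regularity of $\Graph\Phi$ follows as in Theorem~\ref{thm:GoGautostack}: it is a finite union, indexed by the finitely many pairs $(h,a)$ with $h\in H\cap B_{\Gamma_A(G)}(K)$ and by the finitely many image words of $\Phi_H$, of languages cut out from the regular sets $\Nf_G$, $\Graph{\Phi_H}$, and (via $M_{h,a}$) $L\times L$, using Theorem~\ref{thm:regclosure}. Hence $\Phi$ is a bounded flow function with regular graph satisfying subgroup closure and $H$-translation invariance, so $G$ is autostackable {\rsp} $H$.

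The step I expect to be the genuine obstacle is the construction and analysis of the out-of-$\Gamma_B(H)$ combing flow: it must be simultaneously bounded, well-founded, regular, and---the delicate point---a function of the coset coordinate only, so that translation invariance holds, while correctly routing the residual bounded $H$-element into $\Phi_H$. The synchronous $H$-coset fellow-traveler property and the multiplier automata $M_{h,a}$ are precisely what make the coset combing bounded and regular, and the lexicographic pairing with the invariant descending chain length $\idcl$ of $\Phi_H$ is what secures termination, mirroring the mechanism already validated in the graph-of-groups argument.
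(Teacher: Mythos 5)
Your overall strategy is the one the paper uses: the normal form set $\Nf_G=\Nf_H\cdot L$, the flow equal to $\Phi_H$ on $\Gamma_B(H)$, elsewhere a bounded coset combing built from the automata $M_{h,a}$ of Proposition~\ref{prop:strongimpliesweak}, termination by a lexicographic measure, and regularity by decomposing $\Graph{\Phi}$ case by case. But as written there are two genuine gaps. The first is your generating-set normalization: you assume that strong prefix-closed coset automaticity of $(G,H)$ can be transferred to an enlarged alphabet $A$ with $B=A\cap H$, citing Section~\ref{sub:autostack} and \cite[Proposition~4.3]{HermillerMartinez:HNN}, but those results concern autostackability only; no enlargement statement for strong coset automaticity is proved there or in this paper, and it is not free (over a larger alphabet the fellow-traveler hypothesis is triggered by more pairs, so one must re-establish the bound, e.g.\ by a chaining argument). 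The paper avoids this entirely by taking $A=B\sqcup C$ with the transversal kept inside $C^*$; the price is that edges labeled by $b\in B$ at vertices outside $H$ are covered by no coset machinery at all, and the paper must add a separate case of the stacking function, $\phi(y,b)=\mathsf{sl}_C(b)$, converting the $B$-letter into a $C$-word. Your uniform treatment of all edges outside $\Gamma_B(H)$ by coset combing silently relies on the unproved enlargement step; without it, those edges are simply not handled.

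The second gap is the termination measure. Take a non-tree edge $e=e_{g,a}$ with $g\in H$ and $a\in A\setminus B$. Then your $\alpha_1(e)=0$, and (as you yourself describe) its flow path must hand the residual $H$-element to $\Phi_H$, i.e.\ it passes through edges $e'$ of $\Gamma_B(H)$ that need not lie in the tree, so $\alpha(e)=(0,0)$ while $\alpha(e')=(0,\dcl_H(e'))$ with $\dcl_H(e')\ge 1$: your measure \emph{increases}. This is precisely why the paper's measure has three components, the middle one being the indicator of ``$g\in H$ and $a\in C$''. Relatedly, $\idcl$ is the wrong quantity here: invariant descending chain length decreases along flow paths only when the subgroup flow is translation invariant, which is what the edge-group flows in Theorem~\ref{thm:GoGautostack} provide; the flow $\Phi_H$ of an arbitrary autostackable structure has no such invariance, so plain $\dcl_H$ must be used. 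Finally, you correctly isolate the main obstacle---making the coset combing simultaneously bounded, terminating, translation invariant, and regular---but defer it to ``the combing construction of \cite{BHH:algorithms}''; note that the naive rewriting of $z_y a$ into $x_{z_ya}z_{z_ya}$ is unbounded in $\ell(z_y)$, and the actual content of the paper's proof is the bounded replacement obtained by backing up only $\mu+1$ letters and completing with ``live-state'' shortcut words $v_{a,\widetilde q},w_{a,\widetilde q}$ read off $M_{h,a}$, together with the verification that all other edges on the resulting path lie in the tree. So the plan matches the paper, but the normalization step, the measure, and the core construction all need repair or completion.
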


\begin{proof}
Since $(G,H)$ is strongly prefix-closed automatic,
Definition~\ref{def:cosetaut} says that there is an inverse-closed
generating set $C$ for $G$, a prefix-closed regular set $\NTr
\subseteq C^*$ of unique representatives of the right cosets $Hg$ of
$H$ in $G$, and a constant $K \ge 0$ such that the language $\NTr$
satisfies the $H$--coset $K$--fellow traveler property.
Autostackability of $H$ gives a bounded flow function $\Phi_H$ for
$H$ over a finite inverse-closed generating set $B$, with a bound
$K_H$, such that $\Graph{\Phi_H}$ is regular. Let $\Nf_H$ be the set
of normal forms for $H$ over $B$ that are the labels of the
non-backtracking paths in the spanning tree of the Cayley graph
$\Gamma_B(H)$ associated to $\Phi_H$, and let $\phi_H:\Nf_H \times B
\ra B^*$ be the stacking function obtained from $\Phi_H$. Then
$\Nf_H$ is prefix-closed, and since $\Nf_H$ is the projection on the
first coordinate of the regular language $\Graph{\Phi}$,
Theorem~\ref{thm:regclosure} shows that $\Nf_H$ is also regular.

By taking separate copies of letters representing
the same group element, if necessary, we may assume
that $B \cap C=\emptyset$.  Let $A:=B\sqcup C$
and let $\Gamma:=\Gamma_{A}(G)$ be the Cayley
graph of $G$ with respect to the generating set $A$.
Then the set
$$
\Nf_G:=\Nf_H\NTr
$$
is a prefix-closed regular language of normal forms for $G$ over
$A$.  Let $T$ be the spanning tree
in $\Gamma$ consisting of the edges that lie on
paths starting at $\groupid$ and labeled by
words in $\Nf_G$.

For any element $g \in G$, we can write its normal form in $\Nf_G$
uniquely as $\nf{g}=x_gz_g$ with $x_g\in\Nf_H$ and $z_g\in\NTr$.
Similarly, for each $y \in \Nf_G$, write $y=x_yz_y$ with
$x_y \in \Nf_H$ and $z_y \in \NTr$.

\noindent{\bf The flow function:}  Next we construct a function
$\phi:\Nf_G \times A \ra A^*$.
We begin with some more notation.

Fix a total ordering on $C$, and for any letter $b \in B$,
denote the shortlex least word over $C$ representing the same element
of $G$ as $b$ by $\mathsf{sl}_C(b)$.

Recall that for any word $w$ in $A^*$ and integer $i \ge 0$, the
symbol $w(i)$ denotes the prefix of $w$ of length $i$ if $\ell(w)\ge
i$ and $w(i)=w$ if $\ell(w) < i$. Let $w(i)'$ denote the
suffix of $w$ with the first $i$ letters removed;
that is, if $w=a_1 \cdots a_k$ with each $a_j \in A$, then
$w(i)=a_1 \cdots a_i$  and $w(i)':=a_{i+1} \cdots a_k$
if $\ell(w) > i$, and $w(i)':=\emptyword$ if $\ell(w) \le i$.
Note that $w=w(i)w(i)'$.  The symbol $\mathsf{red}(w)$
represents the resulting freely reduced word obtained from $w$ after
all subwords of the form $aa^{-1}$ are (iteratively) removed.

Also recall from Proposition~\ref{prop:strongimpliesweak} that for
each element $h \in H \cap B_{\Gamma_C(G)}(K)$ and $c \in C$, there
is a finite state automaton $M_{h,c}$ accepting the set of all pairs
$(z,z')$ with $z,z' \in \NTr$ and $zc=_G hz'$, with state set
$\widetilde Q$, initial state $(q_0,h)$, accept states $P \times
\{c\}$, and transition function $\widetilde \delta$.  Note that the
set of states $\widetilde Q$ and the transition function $\widetilde
\delta$ of $M_{h,c}$ in the proof of
Proposition~\ref{prop:strongimpliesweak} do not depend upon $h$ or
$c$.  Hence all of these automata have the same number of states; we
denote this number by $\mu$. On the other hand, the set $P \times
\{c\}$ of accept states does depend on $c$, although it is
independent of $h$. For each state $\widetilde q$ of $M_{h,c}$ for
which there is a path from $\widetilde q$ to an accept state $(p,c)$
of $M_{h,c}$ (viewing the finite state automaton as a graph with
labeled directed edges), there must also be a simple path of length
at most $\mu$ from $\widetilde q$ to an accept state. Such a state
is called a \emph{live} state of $M_{h,a}$.
Fix a choice of a pair of words
$(v_{c,\widetilde q},w_{c,\widetilde q}) \in C^* \times C^*$ such
that $\ell(v_{c,\widetilde q}),\ell(w_{c,\widetilde q}) \le \mu$ and
$\widetilde \delta(\widetilde q,(v_{c,\widetilde q},w_{c,\widetilde
q}))$ is an accept state of $M_{h,c}$.

Let $y \in \Nf_G$ and $a \in A$. The stacking function $\phi$ is
given by
$$
\phi(y,a):=
\begin{cases}
\phi_H(y,a)               & \text{ if } a \in B \text{ and }
                            z_y = \emptyword \\
\mathsf{sl}_C(a)          & \text{ if } a \in B \text{ and }
                      z_y \neq \emptyword \\
a          & \text{ if } a \in C \text{ and either }
                      \nf{ya}=ya \text{ or } y \in A^*a^{-1} \\

\mathsf{red}(z_y^{-1}x_{z_ya}z_{z_ya})
                          & \text{ if } a \in C,~\nf{ya} \neq ya,~
                            y \notin A^*a^{-1} \text{ and }
                            \ell(z_y) \le \mu \\
(z_y(j)')^{-1}
  v_{a,\widetilde q}a
  (w_{a,\widetilde q})^{-1}
  z_{z_ya}(j)'
                   & \text{ if } a \in C,~\nf{ya} \neq ya,~
                     y \notin A^*a^{-1}, \ell(z_y) > \mu,\\
                   & \text{ }~~j:=\ell(z_y) - \mu - 1 , \text{ and } 
       \widetilde q:=\widetilde \delta((q_0,x_{z_ya}),(z_y(j),z_{z_ya}(j))).
\end{cases}
$$
Let $\Phi:\vec E \ra \vec P$ be defined by $\Phi(e_{g,a}) :=$ the
path in $\Gamma_{A}(G)$ starting at $g$ labeled by $\phi(\nf{g},a)$,
for all $g \in G$ and $a \in A$.

\noindent{\bf Property (F1):} It follows immediately from this definition that
$\Phi(e_{g,a})$ has the same initial and terminal vertices as
$e_{g,a}$ for all $g \in G$ and $a \in B$.  Suppose instead that $a \in C$.
If $\ell(z_g) \le \mu$, then since $z_g a =_G x_{z_ga}z_{z_ga}$,
again $\Phi$ fixes the endpoints of $e_{g,a}$.

On the other hand, suppose that $\ell(z_g) \ge \mu+1$ and let
$y:=\nf{g}$ and $j:=\ell(z_y)-\mu-1$. In this case since $a$ is a
single letter in $C$ and $z_y,z_{z_ya} \in \NTr$ satisfy $z_y a =_G
x_{z_ya}z_{z_ya}$, the $H$--coset $K$--fellow traveler property
implies that the element $h$ of $H$ represented by $x_{z_ya}=x_h$
lies in $B_{\Gamma_C(G)}(K)$. Hence the (padded word corresponding
to the) pair $(z_y,z_{z_ya})$ is accepted by the automaton
$M_{h,a}$, and the state $q_{y,a}:=\widetilde
\delta((q_0,h),(z_y,z_{z_ya}))$ is an accept state of $M_{h,a}$.
Factor the word
$z_y$ as $z_y=z_y(j)z_y(j)'$, and note that $z_y(j)'$ is the suffix
of $z_y$ of length $\mu+1$. Similarly factor
$z_{z_ya}=z_{z_ya}(j)z_{z_ya}(j)'$.
Now the state
$\widetilde q:=\widetilde \delta((q_0,h),(z_{y}(j),z_{z_ya}(j)))$
of the automaton $M_{h,a}$ satisfies
$\widetilde \delta(\widetilde q,(z_y(j)',z_{z_ya}(j)'))=
q_{y,a}$, and so
 $\widetilde q$ is live in $M_{h,a}$.
Now the pair $(z_y(j)v_{a,\widetilde q},z_{z_ya}(j)w_{a,\widetilde q})$
is accepted by $M_{h,a}$, and so we have
$z_y(j)v_{a,\widetilde q}a =_G hz_{z_ya}(j)w_{a,\widetilde q}
=_G x_{z_ya}z_{z_ya}(j)w_{a,\widetilde q}$.
Hence $(z_y(j)')^{-1}
  v_{a,\widetilde q}a
  (w_{a,\widetilde q})^{-1}
  z_{z_ya}(j)'=_G a$,
and $\Phi$ fixes the endpoints of $e_{g,a}$ in this
last case as well.

To see that the function $\Phi$ is bounded, we inspect each of the
cases. In the first case of the piecewise definition of $\phi$
above, the length of the path $\Phi(e_{g,a})$ is at most the bound
$K_H$ of the flow function $\Phi_H$, is it at most
$\max\{\ell(\mathsf{sl}_A(b)) \mid b \in B\}$ in the second, 1 in
the third, and $\max\{\ell(z^{-1}x_{za}z_{za}) \mid z \in \NTr,
\ell(z) \le \mu, \text{ and }a \in C\}$ in the fourth case.  Since
the two maxima are over finite sets, these are finite numbers. Now
suppose that the fifth case holds. Since the set $\NTr$ contains
only one representative of each coset, there is a unique word $z'$
such that $(z_y,z')$ is accepted by $M_{h,a}$, namely $z'=z_{z_ya}$,
and so the path in $M_{h,a}$ from $\widetilde q$ to $q_{y,a}$
labeled by $(z_y(j)',z_{z_ya}(j)')$ cannot have length greater than
$\ell(z_y(j)')+\mu$, since it cannot repeat a state after the word
$z_y(j)'$ is completed. To see this,
if a state is repeated, then the definition of the transition
function implies that
$z_{ya}(i)^{-1}x_{z_ya}^{-1}z_y=z_{ya}(k)^{-1}x_{z_ya}^{-1}z_y$ for
$j\le i\le k$. But then $z_{ya}(k)$ and $z_{ya}(i)$ represent the same
$H$--coset; however, $\NTr$ is prefix-closed and has unique coset
representatives. Thus $i=k$. Therefore $\ell(z_{z_ya}(j)') \le
2\mu+1$. Hence in this fifth case the length of $\Phi(e_{g,a})$ is
at most $(\mu+1)+\mu+1+\mu+(2\mu+1)=5\mu+3$. Therefore (F1) holds
for $\Phi$.

\noindent{\bf Property (F2):} Since a directed edge $e_{g,a}$
from $g \in G$ labeled by $a \in A$ in $\Gamma$
lies in the spanning tree $T$ obtained from $\Nf_G$
if and only if either $\nf{ga}=\nf{g}a$ or $\nf{g} \in A^*a^{-1}$,
it is immediate from the definition of $\Phi$ that
any edge $e_{g,a}$ in $T$ with $a \in C$ satisfies
$\Phi(e_{g,a})=e_{g,a}$.  Suppose instead that
$e_{g,a}$ is in $T$ and
$a \in B$.  Since
$\Nf_G=\Nf_H\NTr$ with $\Nf_H \subset B^*$ and $\NTr \subset C^*$,
we must have $z_g=\emptyword$, and so
the fact that the flow function $\Phi_H$ satisfies property (F2)
implies that $\Phi(e_{g,a})=e_{g,a}$ in this case as well.

\noindent{\bf Property (F3):}
In order to show that there is no infinite
sequence $e_1,e_2,... \in \vec E$ of directed edges
of $\Gamma_A(G)$ lying outside of the spanning tree $T$
such that $e_{i+1}$ is in the path $\Phi(e_i)$ for each $i$, we use
the same technique as in the proof of
Theorem~\ref{thm:GoGautostack}.  That is, we define a function
$\alpha:\vec E \ra \N^3$, and show that whenever $e,e' \in \vec E$
are not in $T$ and $e'$ is on the path $\Phi(e)$, then
$\alpha(e)>\alpha(e')$ (using the lexicographic order on $\N^3$).

Since $B$ is a subset of the generating set $A$ of $G$,
we can consider the Cayley graph $\Gamma_B(H)$ to be a
subgraph of the graph $\Gamma_A(G)$;
for each $h\in H$ and $b\in B$ we consider the
edge $e_{h,b}$ to be an edge of both of these graphs.
Let $\dcl_H(e_{h,b})$ be
the descending chain length for that edge from the autostackable
structure on $H$ (that is, the
maximum possible number of edges of $\Gamma_B(H)$
in a sequence
$e_{h,b}=e_1,e_2,...$ such that
$e_i$ is not in $T$ and $e_{i+1}$ is
on $\Phi_H(e_i)$ for all $i$).

For $1 \le i \le 3$ we define
functions $\alpha_i\colon \vec E\to \N$ by
\begin{eqnarray*}
\alpha_1(e_{g,a}) &:=&
\begin{cases}
\ell(z_g), & \text{if } a\in C\\
\max\{\ell(z_{g\mathsf{sl}_C(a)(i)}) \mid i < \ell(\mathsf{sl}_C(a))\}+1
           & \text{if } g\not\in H \text{ and } a\in B\\
0,         & \text{if } g\in H~,
\end{cases} \\
\alpha_2(e_{g,a}) &:=&
\begin{cases}
1, & \text{if } g \in H \text{ and } a\in C \\  
0, & \text{otherwise},
\end{cases}\\
\alpha_3(e_{g,a}) &:=&
\begin{cases}
\dcl_H(e_{g,a}), & \text{if } g\in H \text{ and } a\in B\\
0, & \text{otherwise}.
\end{cases}
\end{eqnarray*}
Now, let $\alpha=(\alpha_1, \alpha_2, \alpha_3)$.

Let $e=e_{g,a} \in \vec E$ be an edge outside of the
tree $T$, and let $e'=e_{g',a'}$ be an edge on $\Phi(e')$
that also is not in $T$.  Let $y:=\nf{g}$ and $y':=\nf{g'}$.
Consider the five cases of the definition of $\phi(y,a)$ in turn.

\noindent{\em Case 1.  Suppose that $a \in B$ and $z_y = \emptyword$.}
In this case $g \in H$, $\alpha(e)=(0,0,\dcl_H(e))$, and
$\lbl(\Phi(e))=\phi_H(y,a) \in B^*$.
Then  $a' \in B$ and $y'=_Gyw$ for a prefix $w$ of $\phi_H(y,a)$,
so $g' \in H$ and $\alpha(e')=(0,0,\dcl_H(e'))$.
Since $e'$ lies on $\Phi_H(e)$, the descending chain lengths
satisfy $\dcl_H(e)>\dcl_H(e')$, and so $\alpha(e)>\alpha(e')$.

\noindent{\em Case 2.  Suppose that $a \in B$ and $z_y \neq \emptyword$.}
In this case $g \notin H$ and
$\alpha(e)=
(\max\{\ell(z_{g\mathsf{sl}_C(a)(i)}) \mid i < \ell(\mathsf{sl}_C(a))\}+1,0,0)$.
We can factor $\lbl(\Phi(e))=\mathsf{sl}_C(a)=wa'w'$ for
some $w,w' \in C^*$ such that $g'=_G gw$ and $w=\mathsf{sl}_C(a)(i)$
for some $i<\ell(\mathsf{sl}_C(a))$.
Note that $a' \in C$.  Now either $g' \in H$,
in which case $\alpha_1(e')=0<\alpha_1(e)$, or else
$g' \notin H$, in which case
$\alpha_1(e')=\ell(z_{g'})=\ell(z_{g\mathsf{sl}_C(a)(i)}) <\alpha_1(e)$.
Hence in both options $\alpha(e)>\alpha(e')$.

\noindent{\em Case 3.  Suppose that $a \in C$ and either
                      $\nf{ya}=ya$ or $y \in A^*a^{-1}$.}
In this case $e$ lies in the tree $T$, so this case can't occur.

\noindent{\em Case 4.  Suppose that $a \in C$,
                      $\nf{ya} \neq ya$, $y \notin A^*a^{-1}$, and
                       $\ell(z_y) \le \mu$.}
In this case $\alpha(e)=(\ell(z_g),\alpha_2(e),0)$; if $g \in H$,
then $\alpha(e)=(0,1,0)$, and if $g \notin H$ then $\alpha_1(e)>0$.
We also have $\lbl(\Phi(e))=\mathsf{red}(z_y^{-1}x_{z_ya}z_{z_ya})$.
We can again factor the word $z_y^{-1}x_{z_ya}z_{z_ya}=wa'w'$ with
$w,w' \in A^*$ and $g' =_G gw$. Note that since $x_{ya}z_{ya} =_G ya
=_G x_yz_ya =_G x_{y}x_{z_ya}z_{z_ya}$, we have $x_{ya} =_H
x_{y}x_{z_ya}$ and $z_{ya}=z_{z_ya}$ since each right coset of $H$
in $G$ has only one representative in $\NTr$. If $a' \in C$, then
the edge $e'$ is either on the subpath starting at $g$ labeled by
$z_y^{-1}$, or the subpath ending at $ga$ labeled by
$z_{z_ya}=z_{ya}$; but these two paths are in the normal form tree
$T$, giving a contradiction.
So, we must have $a' \in B$
and $e'$ is on the subpath of $\Phi(e)$ starting at $gz_y^{-1}=_G
x_y \in H$ labeled by $x_{z_ya} \in B^*$. Hence $g' \in H$, and so
$\alpha(e')=(0,0,\dcl(e'))$. Therefore $\alpha(e)>\alpha(e')$.

\noindent{\em Case 5.  Suppose that $a \in C$,
                      $\nf{ya} \neq ya$, $y \notin A^*a^{-1}$, and
                       $\ell(z_y) > \mu$.}
In this case $\alpha(e)=(\ell(z_g),0,0)$.
Let $j:=\ell(z_y) - \mu - 1$ and
$\widetilde q:=\widetilde \delta((q_0,x_{z_ya}),(z_y(j),z_{z_ya}(j)))$; then
$\lbl(\Phi(e)=(z_y(j)')^{-1}  v_{a,\widetilde q}a  (w_{a,\widetilde q})^{-1}
  z_{z_ya}(j)'$.
As in Case~4, the subpath of $\Phi(e)$ starting at
$g$ labeled $(z_y(j)')^{-1}$, and the subpath ending at $ga$ and labeled
$z_{z_ya}(j)'=z_{ya}(j)'$, both lie in the spanning tree $T$.
Moreover, since the pair
$(z_y(j)v_{a,\widetilde q},z_{ya}(j)w_{a,\widetilde q})$
is accepted by the automaton $M_{h,a}$ for
$h=_G x_{z_ya}$, 
the words $x_yz_y(j)v_{a,\widetilde q}$ and $x_{ya}z_{ya}(j)w_{a,\widetilde q}$
are also normal forms in $\Nf_G$, and so the edges in $\Phi(e)$
in the subpaths labeled $v_{a,\widetilde q}$ and $(w_{a,\widetilde q})^{-1}$
also lie in the tree.  So we must have $a'=a$ and
$g'=_G g(z_y(j)')^{-1}  v_{a,\widetilde q}$.
Then $a' \in C$, and $z_{g'}=z_y(j)v_{a,\widetilde q}$.
Now $\alpha_1(e')=\ell(z_{g'})=\ell(z_y(j)v_{a,\widetilde q})=
\ell(z_y) - \mu - 1 +\ell(v_{a,\widetilde q}) < \ell(z_y)$
since $\ell(v_{a,\widetilde q}) \le \mu$.  Hence
$\alpha(e)>\alpha(e')$.

Then all of the properties (F1)--(F3) hold, and $\Phi$ is a flow
function for $G$ over $A$.

\medskip

\noindent\textbf{Respecting the subgroup $H$:} The definition of the
normal form set $\Nf_G$ as the concatenation $\Nf_H\NTr$ of normal
forms for $H$ over $B$ and normal forms for $H \backslash G$ over
$C$ implies the required structure for the spanning tree built from
these normal forms. Further, subgroup closure of the flow function
$\Phi$ is immediate from the first case of the definition of $\phi$.
The $H$--translation invariance of $\Phi$ follows from the fact that
the word $\lbl(\Phi(e_{g,a}))=\phi(\nf{g},a)$ depends only upon the
coset representative $z_g$ from the transversal, and is independent
of $x_g$, in the other four cases of the definition of $\phi$.

\medskip

\noindent\textbf{Autostackability:}
Recall from the beginning of this proof that
the sets $\Nf_G$, $\Nf_H$, $\NTr$, and $\Graph{\Phi_H}$
are all regular.
Also recall that for any natural number $j$,
the symbol $C^{\le j}$ denotes the set of all words over $C$
of length at most $j$.

Using the state set and transition
function from the automata $M_{h,a}$
constructed in the proof of Proposition~\ref{prop:strongimpliesweak},
we build more finite state automata as follows.  For
every $\widetilde q \in \widetilde Q$ and $\widetilde P \subset \widetilde Q$,
let $M_{\widetilde q,\widetilde P}$ be the automaton
with state set $\widetilde Q$, start state $\widetilde q$,
accept state set $\widetilde P$, and transition function
 $\widetilde \delta$.  Let $L(M_{\widetilde q,\widetilde P})$
be the set of all words accepted by this finite state automaton.

In order to show that $\Graph{\Phi}$
is also regular,
we separate the graph of $\Phi$ into five pieces
using the five cases in the definition
of its stacking function $\phi$:
\begin{eqnarray*}
\Graph{\Phi} &=&
\Graph{\Phi_H} \\
&& \bigcup \left(\cup_{a \in B} (\Nf_G \setminus \Nf_H)
  \times \{a\} \times \{\mathsf{sl}_C(a)\}   \right) \\
&& \bigcup \left(\cup_{a \in C} L_a \times \{a\} \times \{a\}  \right) \\
&& \bigcup \left(\cup_{a \in C} \cup_{z \in \NTr\cap B^{ \le \mu}}
  L_{a,z}' \times \{a\} \times \{\mathsf{red}(z^{-1}x_{za}z_{za})\}
  \right) \\
&& \bigcup \left(\cup_{a \in C} \cup_{\widetilde q \in \widetilde Q}
         \cup_{(u,u') \in (C^{\mu+1} \times C^{\le 2\mu+1})
             \cap L(M_{\widetilde q,P \times \{a\}})}
 L_{a,\widetilde q,u,u'}'' \times \{a\} \times \{u^{-1}v_{a,\widetilde q}a
                                     (w_{a,\widetilde q})^{-1}u'\}
  \right)
\end{eqnarray*}
where
\begin{eqnarray*}
L_{a} &:=& \{ y \in \Nf_G \mid \nf{ya}=ya \text{ or } y \in A^*a^{-1}\}
  =(\Nf_G)_a \cup (\Nf_G \cap A^* a^{-1}), \\
L_{a,z}' &:=& \{ y \in \Nf_G \mid y \notin L_a \text{ and } y \in B^*z\}
  =(\Nf_G \setminus L_a) \cap B^*z,
     \text{ and }\\
L_{a,\widetilde q,u,u'}'' &:=& \{ y \in \Nf_G \mid y \notin L_a 
  \text{ and } \exists~z,z' \in C^*
  \text{ and } h \in H \cap B_{\Gamma_C(G)}(K)
  \text{ such that } \\
&& \text{ } \hspace{1in} y \in B^*zu \text{ and }
 \widetilde \delta((q_0,h),(z,z'))=\widetilde q \}.
\end{eqnarray*}
Note that although the word $u'$ does not appear in
the definition of the set $L_{a,\widetilde q,u,u'}''$,
it follows from the fact that $(u,u') \in L(M_{\widetilde q,P \times \{a\}})$
that if $\widetilde \delta((q_0,h),(z,z'))=\widetilde q$
for some $h \in H \cap B_{\Gamma_C(G)}(K)$ and
$z,z' \in C^*$, then $(zu,z'u') \in L(M_{(q_0,h),P \times \{a\}})$, and
$zua =_G hz'u'$ with $z'u' \in \NTr$, and
so uniqueness of coset representatives among the
words in $\NTr$ implies that $z$, $u$, $a$, and $z'$
uniquely determine $u'$.  Moreover, the equation $zua =_G hz'u'$
also shows that the element $h$ must satisfy $h=_G x_{zua}$; that is,
$h$ is also uniquely determined by $z$, $u$, and $a$.

The closure of regular languages under products and
unions implies that it suffices to show that the
languages $L_a$, $L_{a,z}$ and $L_{a,\widetilde q,u,u'}''$ are regular.
Closure of regular languages under quotients (Theorem~\ref{thm:regclosure}),
unions, and intersections shows that the language $L_a$ is regular, and
closure under complementation and concatenation shows that
$L_{a,z}'$ is also regular.

Analyzing the language $L_{a,\widetilde q,u,u'}''$ further,
we have
\begin{eqnarray*}
L_{a,\widetilde q,u,u'}'' &=& (\Nf_G \setminus L_a) \cap
(\cup_{h \in H \cap B_{\Gamma_C(G)}(K)}~
B^*L_{h,\widetilde q}'''~u) \hspace{.2in} \text{ where} \\
L_{h,\widetilde q}''' &:=& \{z \in C^* \mid \exists~z' \in C^* \text{ such that }
  \widetilde \delta((q_0,h),(z,z'))=\widetilde q\}.
\end{eqnarray*}
Now $L_{h,\widetilde q}''' =
p_1(L(M_{(q_0,h),\{\widetilde q\}}))$, where
$p_1$ denotes projection on the first coordinate.
Since $L(M_{(q_0,h),\{\widetilde q\}})$ is the language
of a finite state automaton, it is regular, and so
 closure under projection  
shows that $L_{h,\widetilde q}'''$ is regular.
Hence each set $L_{a,\widetilde q,u,u'}''$ is regular.

Therefore $\Graph{\Phi}$ is a regular language.
Thus, $G$ is autostackable {\rsp} $H$.
\end{proof}

\begin{rmk}\label{rmk:algorithm}
Suppose that $(G,H)$ is a strongly shortlex coset automatic pair
such that both of the groups $G$ and $H$ are also
shortlex automatic.
Holt and Hurt~\cite{HoltHurt:Coset} have shown that
there is an algorithm which, upon input of a finite
presentation of $G$ (over the relevant generating set)
and the finite generating set of $H$,
can compute the finite state automata
$M_{h,a}$ of Proposition~\ref{prop:strongimpliesweak},
together with a finite state automaton accepting
the shortlex transversal,
for the strongly shortlex coset automatic structure for $(G,H)$.
Since $H$ is shortlex automatic,
there is an algorithm to compute the shortlex automatic structure
for $H$ from a finite presentation with the associated generators as
well (see~\cite[Chapters~5--6]{wordprocessing} for more details).
Hence there also is an algorithm which can produce the automaton
accepting the regular language $\Graph{\Phi}$; that is, it is
possible to algorithmically compute the autostackable structure on
$G$ {\rsp} $H$ in this case.
\end{rmk}

For hyperbolic groups, we obtain the following corollary which will
be used in the following section.

\begin{cor}\label{cor:hypcyc}
Hyperbolic groups are autostackable
{\rsp} quasiconvex subgroups. In particular, a hyperbolic group
is autostackable {\rsp} any virtually cyclic subgroup.
\end{cor}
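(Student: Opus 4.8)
The plan is to deduce this from Theorem~\ref{thm:cosetautostack}, whose two hypotheses — that $H$ is a finitely generated autostackable subgroup and that $(G,H)$ is strongly prefix-closed coset automatic — I would verify in turn. Let $G$ be word-hyperbolic and $H \le G$ quasiconvex. First, a quasiconvex subgroup of a hyperbolic group is finitely generated and quasi-isometrically embedded, hence itself word-hyperbolic; every hyperbolic group is shortlex automatic, and its shortlex language is a prefix-closed set of unique normal forms, so by the result of \cite{BHH:algorithms} recalled in Section~\ref{sec:intro} the group $H$ is autostackable. This disposes of the first hypothesis.

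For the second hypothesis I would fix an ordered generating set $A$ of $G$ and work with the shortlex transversal $\NTr$ for the right cosets $H \backslash G$. The plan is to invoke Theorem~\ref{thm:shortlexcosetaut}, for which it suffices to establish the $H$--coset $K$--fellow traveler property for $\NTr$. Concretely, suppose $z, z' \in \NTr$ and a generator $a$ satisfy $za =_G hz'$ with $h \in H$; I must produce a constant $K$, independent of $z, z', a$, with $d(z(i), h z'(i)) \le K$ for all $i$. Each of $z, z'$ is a geodesic from $\groupid$ to the point $\bar z$, respectively $\bar z'$, of its coset nearest $\groupid$, and one checks $\bigl| \ell(z) - \ell(z') \bigr| \le 1$, so the endpoint $\bar z a =_G h\bar z'$ of $za$ realizes the distance from $\groupid$ to the coset of $z'$ up to an additive $2$. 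The heart of the matter is to bound $|h|$ using $\delta$--thinness of triangles together with $\kappa$--quasiconvexity of $H$; once $|h| \le K_0$ is known, the geodesics $z$ and $hz'$ have initial points within $K_0$ and terminal points within $1$, so the standard synchronous fellow traveler estimate for geodesics in a hyperbolic group completes the fellow traveling. Then Theorem~\ref{thm:shortlexcosetaut} gives that $(G,H)$ is strongly shortlex coset automatic; since the shortlex transversal is prefix-closed (any prefix of a shortlex-least coset representative is again shortlex-least in its own coset) and contains exactly one representative per coset, $(G,H)$ is in particular strongly prefix-closed coset automatic.

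With both hypotheses in hand, Theorem~\ref{thm:cosetautostack} yields that $G$ is autostackable {\rsp} $H$, proving the first assertion. For the second, I would use the standard fact that every virtually cyclic subgroup of a hyperbolic group is quasiconvex: infinite cyclic subgroups are undistorted, hence quasi-isometrically embedded and quasiconvex, finite subgroups are trivially quasiconvex, and a subgroup commensurable with a quasiconvex one is again quasiconvex. The special case then follows immediately from the general statement.

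I expect the main obstacle to be precisely the boundedness of the backtracking element $h$, i.e.\ the genuine coset fellow traveler estimate. The subtlety is that right cosets $Hg$ are \emph{not} uniformly quasiconvex, so one cannot simply quote coarse uniqueness of nearest-point projection onto $Hg$; nevertheless the bound on $|h|$ does hold and is the classical geometric input underlying coset automaticity of quasiconvex subgroups. I would either cite this from the literature on coset automatic structures (e.g.\ \cite{HoltHurt:Coset}) or prove it directly from thin triangles and quasiconvexity of $H$, this being the one step that requires real care.
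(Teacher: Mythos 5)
Your proposal is correct and follows essentially the same route as the paper: the paper likewise verifies the two hypotheses of Theorem~\ref{thm:cosetautostack} by combining Theorem~\ref{thm:shortlexcosetaut} with the $H$--coset fellow traveler property of the shortlex transversal for quasiconvex subgroups of hyperbolic groups, and by noting that a quasiconvex subgroup is itself hyperbolic, hence autostackable, with the virtually cyclic case handled by quasiconvexity of such subgroups. The one step you flag as requiring real care---bounding the element $h$---is exactly what the paper outsources to a citation (Chapter~10 of Redfern's thesis~\cite{Redfern:thesis}), so your fallback of citing the literature rather than proving it directly is precisely what is done.
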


\begin{proof}
Let $G$ be a hyperbolic group and
let $H\le G$ be a quasiconvex subgroup. In \cite[Chapter~10]{Redfern:thesis},
Redfern proves that any hyperbolic group
has the coset fellow traveler property with respect to any
quasiconvex subgroup using the shortlex transversal for the
right cosets (over any finite
generating set, and with respect to any ordering
on that finite set).  Theorem~\ref{thm:shortlexcosetaut}
then shows that the pair $(G,H)$ is strongly shortlex coset
automatic.  Since $H$ is quasiconvex in $G$, then $H$ is hyperbolic
(see, for example,~\cite[Proposition~III.$\Gamma$.3.7]{Bridson:NPC}),
and so $H$ is autostackable by~\cite[Theorem~4.1]{BHH:algorithms}.
Now apply Theorem~\ref{thm:cosetautostack} to see that $G$ is
autostackable {\rsp} $H$.

The last claim follows since virtually
cyclic subgroups of a hyperbolic group are quasiconvex
(\cite[Corollaries~III.$\Gamma$.3.6,III.$\Gamma$.3.10]{Bridson:NPC}).
\end{proof}

As discussed in Section~\ref{sub:relhyp} above,
Antolin and Ciobanu~\cite[Corollary~1.8]{AntolinCiobanu:relhyp}
showed that groups that are hyperbolic relative to a collection of abelian
subgroups are shortlex biautomatic
using a ``nice'' generating set.
In the remainder of this section we extend their
argument to obtain strong shortlex coset automaticity
and autostackability of the group respecting any of its
peripheral subgroups.  This is
critical in our analysis of fundamental groups
of hyperbolic pieces for Section~\ref{sec:3mfld}.

\begin{thm}\label{thm:relhypauto}
Let $G$ be a group 
that is hyperbolic relative to a collection of subgroups $\{H_1,...,H_n\}$
and is generated by a finite set $A'$.
Suppose that for every index $j$, the group
$H_j$ is shortlex biautomatic on every finite ordered generating set.
Then there is a finite subset
$\HH' \subseteq \HH:=\cup_{j=1}^n (H_j \setminus \groupid)$ such that
for every finite generating set $A$ of $G$ with
$A' \cup \HH' \subseteq A \subseteq A' \cup \HH$ and any ordering on $A$,
and for any $1 \le j \le n$, the pair $(G,H_j)$ is strongly shortlex
coset automatic, and $G$ is autostackable {\rsp} $H_j$, over $A$.
\end{thm}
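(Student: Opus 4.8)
The plan is to derive both halves of the conclusion from the relative-hyperbolicity machinery of Antolin and Ciobanu together with Theorems~\ref{thm:shortlexcosetaut} and~\ref{thm:cosetautostack}. First I would apply Theorem~\ref{thm:quasigeodesic} to obtain constants $\lambda \ge 1$, $\epsilon \ge 0$ and a finite set $\HH' \subseteq \HH$ for which every $A$ with $A' \cup \HH' \subseteq A \subseteq A' \cup \HH$ is $(\lambda,\epsilon)$--{\nice}. Fixing such an $A$ and a total ordering on it, part~(2) of Definition~\ref{def:nice} gives $H_j = \langle A \cap H_j\rangle$, and the hypothesis makes $H_j$ shortlex biautomatic over $A \cap H_j$; part~(3) then upgrades this to shortlex biautomaticity of $G$ over $A$. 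Since a shortlex biautomatic group is automatic with prefix-closed normal forms, \cite[Theorem~4.1]{BHH:algorithms} shows that $H_j$ is autostackable, which is one of the two inputs needed for Theorem~\ref{thm:cosetautostack}.

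The heart of the argument is to establish the $H_j$--coset $K$--fellow traveler property (Definition~\ref{def:cosetaut}) for the shortlex transversal of the right cosets $H_j \backslash G$. Given two shortlex coset representatives $v, w$ and an element $h \in H_j$ with $d_{\Gamma_A(G)}(v, hw) \le 1$, the idea is to pass to the relative Cayley graph $\Gamma_{A \cup \HH}(G)$. Shortlex representatives are geodesics in $\Gamma_A(G)$, and because the {\nice} generating set makes each $\Gamma_{A \cap H_i}(H_i)$ isometrically embedded, such geodesics have no parabolic shortenings; hence part~(1) of Definition~\ref{def:nice} promotes their derived paths to $(\lambda,\epsilon)$--quasigeodesics without backtracking. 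Using Gromov hyperbolicity of $\Gamma_{A \cup \HH}(G)$ these derived quasigeodesics fellow travel in the relative metric, while the bounded coset penetration estimate of Proposition~\ref{prop:bcpwithepsilon} bounds the displacement each time the two paths enter a common peripheral coset; inside such a coset the shortlex biautomatic structure on the relevant $H_i$ supplies the fellow traveling in $\Gamma_A(G)$ itself. Assembling the relative fellow traveling with these per-coset bounds yields a uniform $K$ with $d_{\Gamma_A(G)}(v(i), hw(i)) \le K$ for all $i$. I expect this to be the main obstacle: it requires controlling simultaneously the hyperbolic fellow traveling in the relative graph and the intrinsic fellow traveling inside each parabolic component, and arranging the bookkeeping so that $K$ does not depend on $v$, $w$, or $h$. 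Once the coset $K$--fellow traveler property is in hand, Theorem~\ref{thm:shortlexcosetaut} shows that $(G, H_j)$ is strongly shortlex coset automatic over $A$, giving the first conclusion.

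Finally I would obtain autostackability {\rsp} $H_j$. The shortlex transversal contains exactly one representative of each coset and is prefix-closed: if $w = w'w''$ is shortlex least in its coset and some $u <_{slex} w'$ represented $H_j w'$, then $uw'' <_{slex} w$ would represent $H_j w$, contradicting minimality of $w$. Hence $(G, H_j)$ is in fact strongly prefix-closed coset automatic, and since $H_j$ is finitely generated and autostackable, Theorem~\ref{thm:cosetautostack} applies and shows $G$ is autostackable {\rsp} $H_j$. (The generating set produced by Theorem~\ref{thm:cosetautostack} is the disjoint union of a copy of $A \cap H_j$ with $A$; as in Lemma~\ref{lem:extendgen}, duplicating the subgroup generators is harmless.) For the closing assertion, Remark~\ref{rmk:toralhypisnice} records that abelian groups are shortlex biautomatic on every ordered generating set, so the hypothesis holds automatically when the $H_j$ are abelian, and $G$ is then autostackable {\rsp} any peripheral subgroup.
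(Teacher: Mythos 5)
Your outer scaffolding matches the paper's proof exactly: Theorem~\ref{thm:quasigeodesic} to produce $\HH'$ and the $(\lambda,\epsilon)$--{\nice} generating sets $A$, Definition~\ref{def:nice}(2)--(3) to get shortlex biautomaticity of $H_j$ and then of $G$ over $A$, autostackability of $H_j$, prefix-closedness of the shortlex transversal, and finally Theorems~\ref{thm:shortlexcosetaut} and~\ref{thm:cosetautostack}. But the step you yourself flag as ``the main obstacle''---the $H_j$--coset fellow traveler property---is precisely where your proposal has a genuine gap, and the strategy you sketch for it (fellow traveling of the derived paths in $\Gamma_{A\cup\HH}(G)$ via hyperbolicity, plus bounded coset penetration at cosets penetrated by both paths, plus parabolic fellow traveling inside those cosets) does not close it. Two concrete problems. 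First, any proof \emph{must} produce a uniform bound on the word length of $h$: taking $i=0$ in Definition~\ref{def:cosetaut}, the required inequality reads $d_{\Gamma_A(G)}(v(0),hw(0))=d_{\Gamma_A(G)}(\groupid,h)\le K$, and nothing in your sketch supplies this---you only invoke the part of bounded coset penetration (Definition~\ref{def:relhyp}(2)(b)) concerning cosets penetrated by \emph{both} paths, which never sees $h$ itself. Second, even granting that bound, Hausdorff-closeness in the relative graph together with entry/exit estimates at common cosets does not by itself give the \emph{synchronous} bound $d_{\Gamma_A(G)}(v(i),hw(i))\le K$: the two paths may spend very different numbers of $\Gamma_A(G)$--steps between matched coset transitions, and the resulting skew in the parametrizations is exactly the bookkeeping you leave unresolved.

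The paper's proof avoids both problems with one trick that is missing from your proposal. Since $v,w$ are shortlex least in their right cosets, no nonempty prefix of either represents an element of $H_j$, so the derived quasigeodesics $\hat p,\hat q$ never penetrate the coset $\groupid H_j$. Now append to $\hat q$ the single relative edge $e$ from $\groupid$ to $h$ labeled by the letter of $\HH$ representing $h$: the concatenation $e\hat q$ is a $(\lambda,\epsilon+\lambda+1)$--quasigeodesic without backtracking, $e$ is an $H_j$--component of it lying in $\groupid H_j$, and $\hat p$ does not penetrate that coset; hence clause (a) of bounded coset penetration, via Proposition~\ref{prop:bcpwithepsilon}, yields $d_{\Gamma_A(G)}(\groupid,h)\le B:=B(\lambda,\epsilon+\lambda+1)$. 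This is the missing bound on $h$, and it also renders the entire relative-graph fellow-traveling argument unnecessary: writing $h=a_1\cdots a_m$ with $m\le B$, let $r_i$ be the path from $a_1\cdots a_i$ to the vertex $va$ labeled by the shortlex representative of $(a_1\cdots a_i)^{-1}va$. Then $r_0$ and $p$ share their initial vertex and have terminal vertices at distance $1$, consecutive paths $r_{i-1},r_i$ start at distance $1$ and share their terminal vertex, and $r_m=q$ (shortlex-least coset representatives are shortlex normal forms of their group elements); so shortlex biautomaticity of $G$ makes each consecutive pair $K$--fellow travel, and $p,q$ fellow travel with constant $(B+1)K$. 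In short, the fellow traveling happens entirely inside $\Gamma_A(G)$ by biautomaticity of $G$, and relative hyperbolicity is used only once, through BCP(a), to bound $h$; without that ingredient your argument cannot conclude.
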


\begin{proof}
Theorem~\ref{thm:quasigeodesic} says that there are constants
$\lambda \ge 1$ and $\epsilon \ge 0$ and a finite subset $\HH'
\subseteq \HH$ such that any finite $A$ satisfying $A' \cup \HH'
\subseteq A \subseteq A' \cup \HH$ is a
$(\lambda,\epsilon)$--{\nice} generating set of $G$ with respect to
$\{H_1,...,H_n\}$. Let $B:=B(\lambda,\epsilon+\lambda+1)$ be the
bounded coset penetration constant from
Proposition~\ref{prop:bcpwithepsilon}.

Fix a total ordering on $A$.
Since for each index $j$ the set $A \cap H_j$ generates $H_j$
(from Definition~\ref{def:nice}(2)),
by hypothesis the group $H_j$ is shortlex biautomatic
on this ordered set.  Then {\nice}ness of $A$ implies
that the group $G$ is shortlex biautomatic on the
generating set $A$.  Let $K$ be the fellow traveler constant
associated to this biautomatic structure.

Now fix an index $j \in \{1,...,n\}$, and let $L_\Tr \subset A^*$ be
the set of shortlex least representatives of the right cosets
of $H_j$ in $G$.  Suppose that $v,w \in L_\Tr$,
$a \in A$, and $h \in H_j$ satisfy $va=_G hw$.

Let $p$ be the path in $\Gamma_A(G)$ starting at $\groupid$ labeled
by $v$, and let $q$ be the path in $\Gamma_A(G)$ starting at $h$
labeled by $w$.  Then $p$ and $q$ are geodesics, and so have no
parabolic shortenings. Consider the paths $\hat{p}$ and $\hat{q}$ in
$\Gamma_{A\cup\HH}$ derived from the paths $p$ and $q$.  By
Definition~\ref{def:nice}(1), both $\hat p$ and $\hat q$ are
$(\lambda,\epsilon)$--quasigeodesics without backtracking.  Since
the words $v,w$ are shortlex minimal in their right $H_j$ cosets, no
nonempty prefix of $v$ or $w$ can represent an element of $H_j$. As
a consequence, the paths $\hat p$ and $\hat q$ cannot penetrate the
coset $\groupid H_j$.

Let $e$ be the edge in $\Gamma_{A\cup\HH}$ from $\groupid$ to $h$
labeled by $h$; then the concatenation $e\hat q$ is a
$(\lambda,\epsilon+\lambda+1)$--quasigeodesic (see, for
example,~\cite[Lemma~3.5]{osin}) in $\Gamma_{A\cup\HH}$ from
$\groupid$ to $\vend(q)$. The edge $e$ is an $H_j$--component of the
path $e\hat q$ lying in the coset $\groupid H_j$. Since $\vst(\hat
p)=\vst(e\hat q)$ and $d_{\Gamma_A(G)}(\vend(\hat p),\vend(e\hat q))
\le 1$ all of the hypotheses of the bounded coset penetration
property are satisfied for the pair of paths $\hat p, e\hat q$, and
so by Proposition~\ref{prop:bcpwithepsilon} applied to
Definition~\ref{def:relhyp}(2)(a), the component $e$ satisfies
$d_{\Gamma_A(G)}(\vst(e),\vend(e)) \le B$. That is,
$d_{\Gamma_A(G)}(\groupid,h) \le B$.

Write $h=a_1 \cdots a_m$ with each $a_i \in A$ and $m \le B$.
For each $0 \le i \le m$, let $w_i \in A^*$ be the shortlex least
representative of $(a_1 \cdots a_i)^{-1}va$ in $G$, and let
$r_i$ be the path in $\Gamma_A(G)$
from the vertex $\vst(r_i)=_G a_1 \cdots a_i$
to the vertex $\vend(r_i)=_G ua$ labeled by $w_i$.
Now the paths $p,r_0$ have the same initial point
and terminal vertices that are a distance 1 apart
in $\Gamma_A(G)$, and so shortlex biautomaticity implies that these paths
$K$-fellow travel.  Similarly each pair of
paths $r_{i-1},r_i$ (with $1 \le i \le m$) start a distance 1 apart
and terminate at the same vertex, and so they
$K$-fellow travel.  Now $r_m$ is the original path $q$,
and so
the paths $p$ and $q$ must $\widetilde K$--fellow
travel, for the constant $\widetilde K=(B+1)K$. That is, for all $i
\ge 0$ we have $d_{\Gamma_A(G)}(v(i),hw(i)) \le \widetilde K$.

Hence the pair $(G,H_j)$ satisfies the $H_j$--coset $\widetilde
K$--fellow traveler property using the shortlex normal forms of the
cosets, and Theorem~\ref{thm:shortlexcosetaut} shows that $(G,H_j)$
is strongly shortlex coset automatic. Now
Theorem~\ref{thm:cosetautostack} shows that $G$ is also
autostackable {\rsp} $H_j$ on the same generating set $A$.
\end{proof}



Finally we consider the special case that $G$ is hyperbolic relative
to abelian subgroups.
As noted in Remark~\ref{rmk:toralhypisnice}, Holt has
shown that finitely generated abelian groups satisfy
the property that they are shortlex biautomatic
on every ordered generating set.
Then Theorem~\ref{thm:relhypauto} and
Remark~\ref{rmk:algorithm} give the following.

\begin{cor}\label{prop:relhypautostack}
Let $G$ be a finitely generated group
that is hyperbolic relative to a collection $\{H_1,...,H_n\}$
of abelian subgroups.
Then there is a finite inverse-closed generating set $A$ of
$G$ such that for any $1 \le j \le n$, the group $G$ is
autostackable {\rsp} $H_j$ over $A$.
Moreover, there is an algorithm which, upon input of
a finite presentation for $G$ with generators $A$
and a finite presentation for $H_j$ with generators $A \cap H_j$,
produces the  autostackable structure.
\end{cor}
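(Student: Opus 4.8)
The plan is to deduce this corollary directly from Theorem~\ref{thm:relhypauto}, with the two abelian hypotheses feeding the two conclusions. First I would verify the standing hypothesis of Theorem~\ref{thm:relhypauto}, namely that each peripheral subgroup is shortlex biautomatic on every finite ordered generating set. Each $H_j$ is finitely generated (being peripheral in a finitely generated relatively hyperbolic group) and abelian by assumption, so this is exactly Holt's result as recorded in Remark~\ref{rmk:toralhypisnice}. With this hypothesis in hand, Theorem~\ref{thm:relhypauto} supplies a finite subset $\HH' \subseteq \HH$ so that, fixing any generating set $A$ with $A' \cup \HH' \subseteq A \subseteq A' \cup \HH$ and any total ordering on $A$, the group $G$ is autostackable {\rsp} $H_j$ over $A$ for every index $j$ simultaneously. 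I would simply take $A := A' \cup \HH'$ (with an arbitrary ordering) as the promised inverse-closed generating set, which establishes the first assertion of the corollary.

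For the algorithmic statement I would invoke Remark~\ref{rmk:algorithm}, whose hypotheses require that $(G,H_j)$ be strongly shortlex coset automatic and that both $G$ and $H_j$ be shortlex automatic over the relevant generating sets. The strong shortlex coset automaticity of $(G,H_j)$ is part of the conclusion of Theorem~\ref{thm:relhypauto}. To obtain shortlex automaticity of $G$ and of $H_j$, I would observe that Theorem~\ref{thm:quasigeodesic} guarantees that $A$ is a $(\lambda,\epsilon)$--{\nice} generating set. Then Definition~\ref{def:nice}(2) gives $H_j = \langle A \cap H_j \rangle$, so that $A \cap H_j$ is precisely the generating set appearing in the input to the algorithm, and abelianness of $H_j$ makes it shortlex biautomatic (hence shortlex automatic) over $A \cap H_j$; moreover Definition~\ref{def:nice}(3), combined with the shortlex biautomaticity of each $H_j$ on $A \cap H_j$, yields that $G$ itself is shortlex biautomatic, and in particular shortlex automatic, over $A$. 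With all these properties assembled, Remark~\ref{rmk:algorithm} provides the desired algorithm producing the autostackable structure from the two finite presentations.

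The only point requiring care, and the closest thing to an obstacle, is the bookkeeping needed to confirm that the hypotheses of Remark~\ref{rmk:algorithm} are met literally rather than merely morally: one must check that the generating set of $H_j$ fed to the algorithm is exactly $A \cap H_j$ (forced by Definition~\ref{def:nice}(2)), and that the shortlex automatic structures on $G$ and on $H_j$ are compatible with the restriction of a single ordering on $A$. Both of these are encoded in the definition of a {\nice} generating set, so no new argument is needed; the work is entirely in tracking that the generating sets and orderings produced by Theorem~\ref{thm:relhypauto} are the same ones referenced in Remark~\ref{rmk:algorithm}. Once this alignment is recorded, the corollary follows immediately.
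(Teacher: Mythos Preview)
Your proposal is correct and follows essentially the same approach as the paper: the paper's proof consists of the single sentence that Remark~\ref{rmk:toralhypisnice} (Holt's result on abelian groups) verifies the hypothesis of Theorem~\ref{thm:relhypauto}, and then Theorem~\ref{thm:relhypauto} together with Remark~\ref{rmk:algorithm} give the conclusion. Your write-up simply unpacks in more detail the verification that the hypotheses of Remark~\ref{rmk:algorithm} are met, which is exactly the right thing to do.
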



\section{3-Manifolds}\label{sec:3mfld}


In this section we prove that the fundamental group of any 
connected, compact
3-manifold with incompressible toral boundary is autostackable. Our
proof follows the procedure from Thurston's Geometrization for
decomposing a 3-manifold, discussed in Section~\ref{sub:3mfld}.

We begin with an analysis of the autostackability
of Seifert fibered 3-manifolds with incompressible toral
boundary, that arise in the JSJ decomposition of
prime, compact, nongeometric 3-manifolds 
with incompressible toral boundary.

\begin{prop}\label{prop:seifertautostack}
Let $M$ be a compact Seifert fibered 3-manifold with
incompressible toral boundary.
Let $T$ be any component of $\partial M$,
and let $H$ be any conjugate of $\pi_1(T)$ in $\pi_1(M)$.
Then $\pi_1(M)$ is autostackable {\rsp} $H$.
\end{prop}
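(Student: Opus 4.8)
The plan is to exploit the Seifert fibered structure of $M$ through the extension closure property (Theorem~\ref{thm:closureextn}), reducing the problem to the base orbifold group, and then to invoke Corollary~\ref{cor:hypcyc}. As recorded in Section~\ref{sub:3mfld}, a compact Seifert fibered $3$-manifold with boundary fibers over a compact $2$-orbifold $O$ with nonempty boundary, and $\pi_1(M)$ is an extension of the orbifold fundamental group $\pi_1^{\mathrm{orb}}(O)$ by the infinite cyclic fiber subgroup. Writing $G:=\pi_1(M)$, letting $K\cong\Z$ denote the regular fiber subgroup, and setting $Q:=\pi_1^{\mathrm{orb}}(O)$, we obtain a short exact sequence $1\to K\xrightarrow{\,i\,}G\xrightarrow{\,q\,}Q\to 1$ in which $K$ is normal in $G$.

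First I would arrange the subgroup containments needed to apply Theorem~\ref{thm:closureextn}. Since each boundary torus of a Seifert fibered space is saturated by regular fibers, the fiber subgroup $K$ lies inside $\pi_1(T)\cong\Z^2$ for every boundary component $T$; and because $K$ is normal, for any conjugate $H=g\,\pi_1(T)\,g^{-1}$ we have $K=gKg^{-1}\le H$. Thus $H$ contains $K$, as the theorem requires. The kernel $K\cong\Z$ is free, hence autostackable (for instance by~\cite[Theorem~4.1]{BHH:algorithms}), so it remains only to verify that $Q$ is autostackable {\rsp} $q(H)$.

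For this I would first note that $Q=\pi_1^{\mathrm{orb}}(O)$ is hyperbolic: since $O$ is a compact $2$-orbifold with nonempty boundary it deformation retracts onto a $1$-dimensional spine, so $Q$ is the fundamental group of a graph of finite cyclic groups and is therefore virtually free, hence hyperbolic. Next, the image $q(H)$ is a conjugate of $q(\pi_1(T))=\pi_1(T)/K\cong\Z^2/\Z$; because the regular fiber is represented by a primitive simple closed curve on the boundary torus, this quotient is infinite cyclic, so $q(H)$ is virtually cyclic. By Corollary~\ref{cor:hypcyc}, $Q$ is autostackable {\rsp} $q(H)$. Applying Theorem~\ref{thm:closureextn} to the sequence above, with $K$ autostackable and $Q$ autostackable {\rsp} $q(H)$, yields that $G=\pi_1(M)$ is autostackable {\rsp} $H$.

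The main obstacle here is not algebraic but consists of marshalling the standard $3$-manifold facts precisely: that the fiber subgroup is normal and contained in every peripheral subgroup (boundary tori are vertical, hence saturated), that the base orbifold genuinely has boundary so that $Q$ is virtually free rather than a closed $2$-orbifold group that might be Euclidean and non-hyperbolic, and that the fiber is primitive on each boundary torus so that $q(H)$ is infinite cyclic rather than trivial or finite. Incompressibility of $\partial M$ guarantees that $\pi_1(T)\cong\Z^2$ injects into $G$, which forces $q(H)$ to be infinite and thus genuinely (infinite) cyclic; this is exactly where the boundary hypothesis is used. Once these geometric inputs are secured, all of the autostackability bookkeeping is carried by the two cited results, so the proof reduces to a clean two-step reduction.
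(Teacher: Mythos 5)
Your proposal is correct, and it follows the paper's main reduction exactly: the Seifert fibration exact sequence $1\to K\to\pi_1(M)\to\pi_1^o(X)\to 1$ with $K<\pi_1(T)$, normality of $K$ giving $K<H$, and Theorem~\ref{thm:closureextn} to pass to the base orbifold group. Where you genuinely diverge is in how the base case is finished. The paper invokes Thurston's classification of compact $2$-orbifolds with boundary and splits into two cases: when $X$ is hyperbolic, $\pi_1^o(X)$ is a hyperbolic group, $q(H)$ is abelian hence virtually cyclic, and Corollary~\ref{cor:hypcyc} applies; when $X$ is Euclidean (annulus, M\"obius band, or disk with two order-$2$ cone points), $\pi_1^o(X)$ is virtually $\Z$, $q(H)$ is infinite (kernel of $q|_H$ lies in the cyclic group $K$) hence of finite index, and the finite-index closure result Proposition~\ref{prop:closuresupergp} is used instead. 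You unify both cases: since $X$ has nonempty boundary and only cone-point singularities, $\pi_1^o(X)$ is a free product of a free group and finite cyclic groups, hence virtually free and therefore hyperbolic even in the Euclidean cases, while $q(H)\cong\pi_1(T)/K$ is infinite cyclic (primitivity of the fiber class on the boundary torus; in fact, even without primitivity, $\Z^2$ modulo an infinite cyclic subgroup is virtually cyclic, so your argument is robust here), so Corollary~\ref{cor:hypcyc} alone suffices. This buys a cleaner, case-free argument that dispenses with Proposition~\ref{prop:closuresupergp} and with the explicit list of Euclidean base orbifolds; the paper's version, in exchange, leans only on the coarse geometric classification and avoids needing the virtually-free structure of orbifold groups with boundary or the primitivity of the fiber. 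One cosmetic difference: the paper also records the vacuous case $\partial M=\emptyset$ (closed geometric $M$), which your write-up implicitly excludes; since the statement presupposes a boundary component $T$, this omission is harmless.
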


\begin{proof}
Note that if $\partial M=\emptyset$, then $M$ is closed
and geometric, and so $\pi_1(M)$
is autostackable by~\cite[Corollary~1.5]{BHH:algorithms}.
For the remainder of this proof, we assume that $\partial M \neq \emptyset$.

Let $X$ be the base orbifold of the Seifert fibered space $M$,
and let $\pi_1^o(X)$ be the \emph{orbifold} fundamental group of $X$.
There exists a short exact sequence
$$
1\longrightarrow K \overset{i}\longrightarrow \pi_1(M)
\overset{q}\longrightarrow \pi_1^{o}(X)\longrightarrow 1,
$$
where $K \cong \Z$ is generated by a
regular fiber~\cite[Lemma~3.2]{Scott:3mflds}
and $K < \pi_1(T)$.
Since $K$ is normal in $\pi_1(M)$,
the conjugate $H$ of $\pi_1(T)$ also
satisfies $K < H$, and $H \cong \Z^2$.

Since the infinite cyclic group $K$ is autostackable,
Theorem~\ref{thm:closureextn} implies that in order to
prove that $\pi_1(M)$ is autostackable {\rsp} $H$,
it suffices to show that $\pi^o(X)$ is autostackable {\rsp} the image $q(H)$
of $H$ in this orbifold fundamental group.

Since M has nonempty boundary, the base orbifold X has nonempty
boundary, as
well. Comparing to the list of compact orbifolds in the
classification given in~\cite[Theorem~13.3.6]{thurston}
(and noting that there are no singular fibers over points in
the boundary of $X$), we find that no elliptic
or bad orbifold occurs as the base of a Seifert fibered space with
incompressible boundary, since all of these give a solid torus for $M$.
The only Euclidean (called parabolic in~\cite{thurston})
compact orbifolds which occur are the annulus,
the M\"obius band, and the 2-disk with two fibers
of multiplicity 2;
all other base orbifolds are hyperbolic.
We consider the Euclidean and hyperbolic cases separately.

Suppose first
that $X$ is a hyperbolic orbifold.
In this case the group $\pi_1^o(X)$ is hyperbolic. 
Since $H$ is abelian, the image $q(H)$ of $H$ in $\pi_1^o(X)$ is an
abelian subgroup of this hyperbolic group, and so $q(H)$ must be
virtually cyclic. Thus, by  Corollary~\ref{cor:hypcyc}, $\pi_1^o(X)$
is autostackable {\rsp}  $q(H)$, in any generating set for
$\pi_1^o(X)$ containing generators for $q(H)$.

Suppose instead that $X$ is a Euclidean orbifold. 
Then $X$ is one of the three possible orbifolds
listed above, all of which have orbifold fundamental group
$\pi_1^o(X)$ that is virtually $\Z$.
Since the kernel of the restriction of the map
$q:\pi_1(M) \ra \pi_1^o(X)$ to $H \cong \Z^2$ is contained
in the cyclic group $K$, the image $q(H)$ is an
infinite subgroup of $\pi_1^o(X)$, and hence is of finite index.
Since $q(H)$ is a finitely generated abelian group, $q(H)$ is also
autostackable.
Now Proposition~\ref{prop:closuresupergp} shows that $\pi_1^o(X)$ is
autostackable relative to $q(H)$.
\end{proof}

We are now ready to prove our theorem for compact 3-manifolds
with incompressible toral boundary.
Note that the proof is
very direct, and produces an autostackable structure that can, in
theory, be computed using software. 
This is in sharp contrast to the proof in~\cite{wordprocessing}
of the existence of an automatic structure on the
fundamental group of a 3-manifold with no \emph{Nil} or \emph{Sol}
pieces in its prime decomposition, which gives an automatic structure
which would be difficult to explicitly produce.

\begin{thm}\label{thm:3mfldauto}
Let $M$ be a compact 3-manifold with incompressible toral boundary.
Then $\pi_1(M)$ is autostackable. In particular, if $M$ is closed,
then $\pi_1(M)$ is autostackable.
\end{thm}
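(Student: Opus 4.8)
The plan is to follow the topological decomposition of $M$ reviewed in Section~\ref{sub:3mfld}, reducing at each stage to pieces whose autostackability (respecting the relevant peripheral subgroups) is already established, and then reassembling with Theorem~\ref{thm:GoGautostack}. I may assume $M$ is connected, since otherwise $\pi_1(M)$ refers to a single component. If $M$ is non-orientable, I would first pass to the orientation double cover $\widetilde M$, which is connected since the orientation character $\pi_1(M) \to \Z/2\Z$ is nontrivial. The preimage of each incompressible boundary torus is a disjoint union of tori (each a double cover of the original torus) that remain $\pi_1$--injective in $\widetilde M$, so $\widetilde M$ is a connected compact orientable 3--manifold with incompressible toral boundary, and $\pi_1(\widetilde M)$ has index $2$ in $\pi_1(M)$. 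Once the orientable case is known, Proposition~\ref{prop:closuresupergp} gives that $\pi_1(M)$ is autostackable (in fact {\rsp} $\pi_1(\widetilde M)$). Thus it suffices to treat connected, compact, orientable $M$.

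Next I would reduce to the prime case. By the prime decomposition recalled in Section~\ref{sub:3mfld}, $M = M_1 \# \cdots \# M_k$ with each $M_i$ a prime compact orientable 3--manifold with incompressible toral boundary, and $\pi_1(M) \cong \pi_1(M_1) * \cdots * \pi_1(M_k)$. A free product of autostackable groups is the fundamental group of a graph of groups over a finite tree whose edge groups are trivial; since being autostackable {\rsp} the trivial subgroup coincides with being autostackable, Theorem~\ref{thm:GoGautostack} shows the free product is autostackable provided each factor is (and when $k=1$ there is nothing to combine). Hence it suffices to show that $\pi_1(M_i)$ is autostackable for each prime $M_i$.

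So suppose $M$ is prime. If $M$ is geometric, then $\pi_1(M)$ is autostackable: when $M$ is closed this is~\cite[Corollary~1.5]{BHH:algorithms}, and when $\partial M \neq \emptyset$ the manifold is a finite volume Seifert fibered or hyperbolic manifold, so Proposition~\ref{prop:seifertautostack} together with Corollary~\ref{prop:relhypautostack} show that $\pi_1(M)$ is autostackable ({\rsp} any peripheral subgroup). If $M$ is not geometric, then its JSJ decomposition expresses $\pi_1(M)$ as the fundamental group $\pi_1(\gog)$ of a graph of groups over a finite connected graph $\Lambda$ with at least one edge, whose vertex groups are the fundamental groups of the (Seifert fibered or hyperbolic) pieces and whose edge groups are the $\Z^2$ subgroups carried by the splitting tori. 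Each piece $N$ is itself a finite volume geometric compact 3--manifold with incompressible toral boundary, and for every directed edge $e$ into the vertex $v$ corresponding to $N$, the injective image $\homom_e(G_e)$ is a conjugate of $\pi_1(T)$ for the boundary torus $T$ of $N$ glued along that edge. By Proposition~\ref{prop:seifertautostack} and Corollary~\ref{prop:relhypautostack}, $\pi_1(N)$ is autostackable {\rsp} that conjugate of $\pi_1(T)$, i.e.\ {\rsp} $\homom_e(G_e)$. Thus every hypothesis of Theorem~\ref{thm:GoGautostack} is met, and $\pi_1(M) = \pi_1(\gog)$ is autostackable. Combining with the two reductions completes the proof; the closed case is the special instance $\partial M = \emptyset$.

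The step I expect to be the crux is the identification, in the non-geometric case, of each edge-group inclusion $\homom_e \colon \Z^2 \hookrightarrow \pi_1(N)$ with the inclusion of a peripheral subgroup of a geometric piece, so that the combined geometric result applies to $\pi_1(N)$ {\rsp} that subgroup. Here it is essential that Proposition~\ref{prop:seifertautostack} and Corollary~\ref{prop:relhypautostack} provide autostackability {\rsp} \emph{any} conjugate of the fundamental group of \emph{any} single boundary torus: a single piece may abut several splitting tori, and may be glued to itself (producing HNN-type edges), whereas Theorem~\ref{thm:GoGautostack} requires, for each directed edge into $v$, only that $G_v$ be autostackable {\rsp} that one edge's image, allowing a different autostackable structure for each incident edge. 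Verifying that the $\Z^2$ edge groups sit as peripheral subgroups, rather than in the interior, is precisely what makes the geometric-piece results applicable, and is the point on which the whole assembly turns.
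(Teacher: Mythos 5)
Your proposal is correct and follows essentially the same route as the paper's proof: reduce to the orientable case via the double cover and Proposition~\ref{prop:closuresupergp}, reduce to prime summands via the free-product special case of Theorem~\ref{thm:GoGautostack}, handle geometric manifolds via \cite[Corollary~1.5]{BHH:algorithms}, Proposition~\ref{prop:seifertautostack}, and Corollary~\ref{prop:relhypautostack}, and assemble the non-geometric case from its JSJ pieces with Theorem~\ref{thm:GoGautostack}. Your closing emphasis on the edge-group images being (conjugates of) peripheral $\Z^2$ subgroups of the pieces is exactly the point the paper's statements of Proposition~\ref{prop:seifertautostack} and Corollary~\ref{prop:relhypautostack} are formulated to supply.
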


\begin{proof}
Let $\widetilde M$ be an orientable double cover of $M$
in the case that $M$ is not orientable; otherwise let
$\widetilde M:=M$.  Then $\pi_1(\widetilde M)$ is a finite index
subgroup of $\pi_1(M)$, and so~\cite[Theorem~3.4]{BHJ:closure}
(restated above as Proposition~\ref{prop:closuresupergp}) shows that
it suffices to prove that $\pi_1(\widetilde M)$ is autostackable.
Further, $\widetilde M$ also has incompressible toral boundary.

The orientable 3-manifold $\widetilde M$ has a unique decomposition
as a connected sum of prime manifolds, $\widetilde
M=M_1\#M_2\#\cdots\#M_k$.  Then the fundamental group is the free
product $\pi_1(\widetilde
M)=\pi_1(M_1)\ast\pi_1(M_2)\ast\cdots\ast\pi_1(M_k)$. As a free
product of autostackable groups is autostackable (this is shown
in~\cite[Theorem~3.2]{BHJ:closure}, but also follows as a special
case of Theorem~\ref{thm:GoGautostack}), it suffices to show that
$\pi_1(M)$ is autostackable in the case that $M$ is a prime,
orientable, compact 3-manifold with incompressible toral boundary;
for the remainder of this proof we assume that $M$ satisfies 
these four properties.

Suppose that $M$ is also geometric.  If $M$ is closed, then
by~\cite[Corollary~1.5]{BHH:algorithms}, $\pi_1(M)$ 
is autostackable.  If $M$ is not closed, then either
$M$ is Seifert fibered, in which case autostackability
of $\pi_1(M)$ is shown in
Proposition~\ref{prop:seifertautostack}, or else
the interior of $M$ is a finite volume hyperbolic 3-manifold,
and so the last sentence of Section~\ref{sub:3mfld}
together with Corollary~\ref{prop:relhypautostack} 
show that $\pi_1(M)$ is
autostackable.

On the other hand, if $M$ is not geometric, then $M$ admits a JSJ
decomposition into finitely many compact Seifert fibered and
hyperbolic pieces $\{M_v\}_{v \in V}$ also with incompressible toral
boundary. Then $\pi_1(M)$ is the fundamental group of a graph of
groups on a finite connected graph $\Lambda$ with vertex set $V$,
satisfying the property that for each $v \in V$ the vertex group is
$\pi_1(M_v)$, and for each directed edge $e$ in $\Lambda$ the edge
group $G_e \cong \Z^2$ maps via the homomorphism $\homom_e$ to the
image of the fundamental group of an incompressible torus $T_e$ in
the boundary of $M_\vend(e)$; that is, $\homom_e(G_e)=\pi_1(T_e)$.

Let $v \in V$ and let $T$ be an  incompressible torus in the
boundary of $M_v$.
In the case that $M_v$ is Seifert-fibered,
Proposition~\ref{prop:seifertautostack} shows that
$\pi_1(M_v)$ is autostackable {\rsp} $\pi_1(T)$.
In the case that the interior of $M_v$ is hyperbolic,
the fundamental group $\pi_1(M_v)$ is hyperbolic relative to
a (finite) collection of peripheral ($\Z^2$) subgroups
corresponding to the boundary components of $M_v$~\cite[Theorem~5.1]{Farb:relhyp},
and so Corollary~\ref{prop:relhypautostack} shows that
$\pi_1(M_v)$ is autostackable {\rsp} $\pi_1(T)$
in this case as well.

Therefore, by Theorem~\ref{thm:GoGautostack}, $\pi_1(M)$ is
autostackable.
\end{proof}

As a historical note, we remark that
it is a consequence of Theorem~\ref{thm:3mfldauto}
that all closed 3-manifold groups satisfy
the tame combability condition
of Mihalik and Tschantz~\cite{mihaliktschantz},
since every stackable group is tame combable~\cite{BHtame}.
Mihalik and Tschantz show that if $M$ is a
closed irreducible 3-manifold and $\pi_1(M)$
is infinite and tame combable, then 
$M$ has universal cover
homeomorphic to $\mathbb{R}^3$; 
tame combability was introduced in part
to establish a conjecture that all closed
irreducible 3-manifolds with
infinite fundamental group have universal cover
$\mathbb{R}^3$.  While Perelman's~\cite{morgantian} subsequent
proof of the geometrization
theorem has proven this conjecture (and the proof
of Theorem~\ref{thm:3mfldauto} relies on geometrization),
the proof of tame combability for all
closed 3-manifold groups shows the validity of
Mihalik and Tschantz's earlier approach.


\end{document}